        \theoremstyle{plain}
        \newtheorem{thm}{Theorem}[section]
        \newtheorem{lem}[thm]{Lemma}
        \newtheorem{prop}[thm]{Proposition}
        \newtheorem*{mainthm}{Main Theorem}
        \theoremstyle{definition}
        \newtheorem{defn}[thm]{Definition}
        \newtheorem{ex}[thm]{Example}
        \newmdtheoremenv{boxcnd}[thm]{Condition} 
        \theoremstyle{remark}
        \newtheorem{rem}[thm]{Remark}
        \newcommand{\spref}[1]{\href{http://stacks.math.columbia.edu/tag/#1}{#1}}
        \renewcommand{\bar}{\overline}
        \newcommand{\suchthat}{\,:\,}
        \newcommand{\itemref}[1]{\eqref{#1}}
        \newcommand{\opcit}[1][]{[\emph{op.\ cit.}{#1}]\xspace}
        \newcommand{\loccit}{[\emph{loc.\ cit.}]\xspace}
        \numberwithin{equation}{section}
        \newcommand{\Z}{\mathbb{Z}}
        \newcommand{\Q}{\mathbb{Q}}
        \newcommand{\Orb}{\mathscr{O}}
        \DeclareMathOperator{\spec}{Spec}
        \DeclareMathOperator{\supp}{Supp}
        \newcommand{\red}[1]{{#1}_{\mathrm{red}}}
        \newcommand{\QCOHB}{\mathbf{QCoh}}
        \newcommand{\QCOH}[2][]{\QCOHB^{#1}({#2})}
        \newcommand{\Et}{\mathrm{\acute{E}t}}  
        \newcommand{\fppf}{\mathrm{fppf}}
        \newcommand{\et}{\mathrm{\acute{e}t}} 
        \DeclareMathOperator{\Der}{Der}
        \DeclareMathOperator{\Exal}{Exal}   
        \newcommand{\MOD}[1]{\mathbf{Mod}({#1})}
        \DeclareMathOperator{\Def}{Def}
        \newcommand{\EXAL}{{\mathbf{Exal}}}
        \DeclareMathOperator{\Obs}{Obs}
        \newcommand{\DEF}{{\mathbf{Def}}}
        \newcommand{\extn}[1]{[{#1}]}
        \DeclareMathOperator{\Hom}{Hom}
        \DeclareMathOperator{\Ext}{Ext}
        \DeclareMathOperator{\Tor}{Tor}
        \DeclareMathOperator{\coker}{coker}
        \newcommand{\DCAT}{\mathsf{D}}
        \DeclareMathOperator{\Aut}{Aut}
        \newcommand{\tensor}{\otimes}
        \newcommand{\opp}{\circ}
        \newcommand{\FIB}[2]{{#1}({#2})}
                \newcommand{\AB}{\mathbf{Ab}}
                \newcommand{\Sch}{\mathbf{Sch}}
                \newcommand{\SCH}[2][]{{\Sch}^{#1}/{#2}}
\newenvironment{mydescription}{%
   
   \begin{description}%
}{%
   \end{description}%
}
\newcommand{\fndefn}[1]{\emph{{#1}}} 
\newcommand{\trvret}[2]{{r}_{{#1},{#2}}}
\newcommand{\GI}{{GI}\xspace}
\newcommand{\GS}{{GS}\xspace}
\newcommand{\GB}{{GB}\xspace}
\newcommand{\CI}{{CI}\xspace}
\newcommand{\CS}{{CS}\xspace}
\newcommand{\CB}{{CB}\xspace}
\newcommand{\Homog}[1]{\mathbf{{#1}}} 
\newcommand{\HNIL}{\Homog{Nil}}
\newcommand{\HCL}{\Homog{Cl}}
\newcommand{\HrNIL}{\Homog{rNil}}
\newcommand{\HrCL}{\Homog{rCl}}
\newcommand{\HFIN}{\Homog{Fin}}
\newcommand{\HINT}{\Homog{Int}}
\newcommand{\HArt}{\Homog{Art^{fin}}}
\newcommand{\HArtINSEP}{\Homog{Art^{insep}}}
\newcommand{\HArtTriv}{\Homog{Art^{triv}}}
\newcommand{\HA}{\Homog{Aff}}
\newcommand{\HDVR}{\Homog{DVR}}
\newcommand{\Van}{\mathbb{V}}
\newcommand{\COHB}{\mathrm{Coh}}
\title{Artin's criteria for algebraicity revisited} 
\date{June 18, 2013}
\author{Jack Hall}
\address{Department of Mathematics\\KTH Royal Institute of
  Technology\\SE-100 44 Stockholm\\Sweden}
\email{jackhall@math.kth.se}
\author{David Rydh}
\address{Department of Mathematics\\KTH Royal Institute of
  Technology\\SE-100 44 Stockholm\\Sweden}
\thanks{This collaboration was supported by the G\"oran Gustafsson foundation.
The second author is also supported by the Swedish Research Council 2011-5599.}
\email{dary@math.kth.se}
\subjclass[2010]{Primary 14D15; Secondary 14D23}
\begin{document}
\begin{abstract} 
  Using notions of homogeneity we give new proofs of M.\ Artin's algebraicity criteria for functors and groupoids. Our methods give a more general result, unifying Artin's two theorems and clarifying their differences.
\end{abstract}
\maketitle
\section*{Introduction}
Classically, moduli spaces in algebraic geometry are constructed using either
projective methods or by forming suitable quotients. In his reshaping of the
foundations of algebraic geometry half a century ago, Grothendieck shifted
focus to the functor of points and the central question became whether certain
functors are representable. Early on, he developed formal geometry and
deformation theory, with the intent of using these as the main tools for
proving representability. Grothendieck's proof of the existence of Hilbert and
Picard schemes, however, is based on projective methods. It was not until ten
years later that Artin completed Grothendieck's vision in a series of landmark
papers. In particular, Artin vastly generalized Grothendieck's existence result
and showed that the Hilbert and Picard schemes exist---as algebraic spaces---in
great generality. It also became clear that the correct setting was that of
algebraic spaces---not schemes---and algebraic stacks.


In his two eminent papers~\cite{MR0260746,MR0399094}, M.\ Artin gave precise
criteria for algebraicity of functors and stacks. These criteria were later
clarified and simplified by B.\ Conrad and J.\ de Jong~\cite{MR1935511}, who
replaced Artin approximation with N\'eron--Popescu desingularization, by
H.\ Flenner \cite{MR638811} using $\Exal$, and the first
author~\cite{hallj_openness_coh} using coherent functors. The criterion
in~\cite{hallj_openness_coh} is very streamlined and elegant and suffices---to
the best knowledge of the authors---to deal with all present problems. It does
not, however, supersede Artin's criteria as these are weaker. Another conundrum
is the fact that Artin gives two different criteria---the
first~\cite[Thm.~5.3]{MR0260746} is for functors and the
second~\cite[Thm.~5.3]{MR0399094} is for stacks---but neither completely
generalizes the other.

The purpose of this paper is to use the ideas of Flenner and the first author
to give a new criterion that supersedes all present criteria. We also introduce
several new ideas that strengthen the criteria and simplify the proofs
of~\cite{MR0260746,MR0399094,MR638811}.
In positive characteristic, we also identify a subtle issue in Artin's
algebraicity criterion for stacks. With the techniques that we develop, this
problem is circumvented.
We now state our criterion for
algebraicity.

\begin{mainthm} Let $S$ be an excellent scheme. Then a
category $X$,
fibered in groupoids over the category of $S$-schemes, $\SCH{S}$, is an
algebraic stack, locally of finite presentation over $S$, if and only if it
satisfies the following conditions.
\begin{enumerate}
  \item \label{mainthm:item:fppf-stack}
    $X$ is a stack over $(\SCH{S})_{\fppf}$.
  \item \label{mainthm:item:lp}
    $X$ is limit preserving (Definition~\ref{def:limit-pres}).
  \item \label{mainthm:item:homog}
    $X$ is $\HArtTriv$-homogeneous.
  \item \label{mainthm:item:eff}
    $X$ is effective (Definition~\ref{def:effective}).
\renewcommand{\theenumi}{5\alph{enumi}} 
\setcounter{enumi}{0}
  \item \label{mainthm:item:bdd}
    Automorphisms and deformations are bounded
    (Conditions~\ref{cnd:bdd_aut} and~\ref{cnd:bdd_def}).
  \item \label{mainthm:item:cons}
    Automorphisms, deformations and obstructions are constructible
    (Condition~\ref{cnd:cons_aut+def+obs}).
  \item \label{mainthm:item:Zar_loc}
    Automorphisms, deformations and obstructions are Zariski-local
    (Condition~\ref{cnd:Zar_loc_aut+def+obs});
    or $S$ is Jacobson; or $X$ is $\HDVR$-homogeneous
    (Definition~\ref{defn:DVR-homogeneity}).
\end{enumerate}
Condition~\ref{cnd:cons_obs} (resp.\ \ref{cnd:Zar_loc_obs}) on obstructions can
be replaced with either Condition~\ref{cnd:cons_obs:n-step} or
\ref{cnd:cons_obs:artin} (resp.\ either Condition~\ref{cnd:Zar_loc_obs:n-step},
or \ref{cnd:Zar_loc_obs:artin}). Finally, we may replace
\itemref{mainthm:item:fppf-stack} and \itemref{mainthm:item:homog} with
\begin{enumerate}\renewcommand{\theenumi}{$\arabic{enumi}'$}
  \item \label{mainthm:item:et-stack}
    $X$ is a stack over $(\SCH{S})_{\Et}$.
\addtocounter{enumi}{1}
  \item \label{mainthm:item:insep-homog}
    $X$ is $\HArtINSEP$-homogeneous.
\end{enumerate}
If every residue field of $S$ is perfect, e.g., if $S$ is a $\Q$-scheme or
of finite type over $\spec(\Z)$, then \itemref{mainthm:item:homog}
and \itemref{mainthm:item:insep-homog} are equivalent.
\end{mainthm}

The $\HArtTriv$-homogeneity (resp.\ $\HArtINSEP$-homogeneity) condition is the
following Schlessinger--Rim
condition: for any diagram of local artinian $S$-schemes of finite type
$[\spec B \leftarrow \spec A \hookrightarrow \spec A']$, where 
$A'\twoheadrightarrow A$ is surjective and the residue field extension
$B/\mathfrak{m}_B\to A/\mathfrak{m}_A$ is trivial (resp.\ purely
inseparable), the natural functor
\[
\FIB{X}{\spec (A'\times_A B)} \to \FIB{X}{\spec A'}
\times_{\FIB{X}{\spec A}} \FIB{X}{\spec B}
\]
is an equivalence of categories.

The perhaps most striking difference to Artin's conditions is that our
homogeneity condition~\itemref{mainthm:item:homog} only involves local artinian
schemes and that we do not need any conditions on \'etale localization of
deformation and obstruction theories. If $S$ is Jacobson, e.g., of finite type
over a field, then we do not even need compatibility with Zariski localization.
There is also no condition on compatibility with completions for
automorphisms and deformations. We will
do a detailed comparison between our conditions and other versions of
Artin's conditions in Section~\ref{sec:comparison}.

All existing algebraicity proofs, including ours, consist of the following
four steps:
{\renewcommand{\theenumi}{\roman{enumi}}
\begin{enumerate}
\item existence of formally versal deformations;\label{step:formal}
\item algebraization of formally versal deformations;\label{step:algebraization}
\item openness of formal versality; and\label{step:openness-of-fv}
\item formal versality implies formal smoothness.\label{step:fv-fs}
\end{enumerate}}
Step~\itemref{step:formal} was eloquently dealt with by
Schlessinger~\cite[Thm.\ 2.11]{MR0217093} for functors and
Rim~\cite[Exp.~VI]{SGA7} for groupoids. This step uses
conditions~\itemref{mainthm:item:homog}
and~\itemref{mainthm:item:bdd} ($\HArtTriv$-homogeneity and boundedness of
tangent spaces). Step~\itemref{step:algebraization} begins
with the effectivization of formally versal deformations using
condition~\itemref{mainthm:item:eff}. One may then algebraize this
family using either Artin's results~\cite{MR0268188,MR0260746} or B.\
Conrad and J.\ de Jong's result~\cite{MR1935511}. In the latter approach,
Artin approximation is replaced with N\'eron--Popescu
desingularization and $S$ is only required to be excellent. This step requires 
condition~\itemref{mainthm:item:lp}.

The last two steps are more subtle and it is here that
\cite{MR0260746,MR0399094,MR638811,starr-2006,hallj_openness_coh} and our
present treatment diverges---both when it comes to the criteria themselves and
the techniques employed. We begin with discussing step~\itemref{step:fv-fs}.

It is readily seen that our criterion is weaker than Artin's two
criteria~\cite{MR0260746,MR0399094} except that, in positive characteristic, we
need $X$ to be a stack in the fppf topology, or otherwise
strengthen~\itemref{mainthm:item:homog}. This is similar
to~\cite[Thm.~5.3]{MR0260746} where the functor is assumed to be an
fppf-sheaf. In \loccit, Artin uses the fppf sheaf condition and a clever
descent argument to deduce that
formally universal deformations are formally
\'etale~\cite[pp.~50--52]{MR0260746}, settling step~\itemref{step:fv-fs} for
functors. This argument relies on the existence of universal deformations
and thus does not extend to stacks with infinite or non-reduced stabilizers.

In his second paper~\cite{MR0399094}, Artin only assumes that the
groupoid is an \'etale stack. His proof of step~\itemref{step:fv-fs} for
groupoids~\cite[Prop.\ 4.2]{MR0399094}, however, does not treat inseparable
extensions. We do not understand how this problem can be overcome without
strengthening the criteria and assuming that either~\itemref{mainthm:item:fppf-stack}
the groupoid is a stack
in the fppf topology or~\itemref{mainthm:item:insep-homog} requiring homogeneity
for inseparable extensions.
Flenner does not discuss formal smoothness,
and in~\cite{hallj_openness_coh} formal smoothness is obtained by strengthening
the homogeneity condition~\itemref{mainthm:item:homog}.

With a completely different and
simple argument, we show that formal versality and formal smoothness are equivalent.
The idea is that with \emph{homogeneity}, rather than \emph{semi-homogeneity},
we can use the stack condition~\itemref{mainthm:item:fppf-stack} to obtain
homogeneity for artinian rings with arbitrary residue field extensions
(Lemma~\ref{lem:HArt}). This immediately implies that formal versality and
formal smoothness are equivalent (Lemma~\ref{lem:smooth}) so we accomplish
step~\itemref{step:fv-fs} without using obstruction theories.

Finally, Step~\itemref{step:openness-of-fv} uses constructibility, boundedness,
and Zariski localization of deformations and obstruction theories
(Theorem~\ref{thm:fv_art_flenner}). In our treatment, localization is only
required when passing to non-closed points of finite type. Such
points only exist when $S$ is not Jacobson, e.g., if $S$ is the spectrum of a
discrete valuation ring. Our proof is very similar to Flenner's proof. It may
appear that Flenner does not need Zariski localization in his criterion, but
this is due to the fact that his conditions are expressed in terms of
deformation and
obstruction \emph{sheaves}.

As in Flenner's proof, openness of versality becomes a matter of simple
algebra. It comes down to a criterion for the openness of the \emph{vanishing
  locus} of half-exact functors (Theorem~\ref{thm:flenner_vl}) that easily
follows from the Ogus--Bergman Nakayama Lemma for half-exact functors
(Theorem~\ref{thm:nakayama}). Flenner proves a stronger statement that implies
the Ogus--Bergman result (Remark~\ref{rem:nakayama-Flenner}).

At first, it seems that we need more than $\HArtTriv$-homogeneity to even make
sense of conditions~\itemref{mainthm:item:bdd}--\itemref{mainthm:item:Zar_loc}.
This will turn out to not be the case. Using
steps~\itemref{step:algebraization} and~\itemref{step:fv-fs}, we prove that
conditions~\itemref{mainthm:item:fppf-stack}--\itemref{mainthm:item:eff}
guarantee that we have homogeneity for arbitrary integral morphisms
(Lemma~\ref{lem:prorep_finhomg_affhomg}). It follows that $\Aut_{X/S}(T,-)$,
$\Def_{X/S}(T,-)$ and $\Obs_{X/S}(T,-)$ are additive functors.

\subsection*{Outline}
In Section~\ref{sec:homogeneity} we recall the notions of homogeneity, limit
preservation and
extensions from~\cite{hallj_openness_coh}. We also introduce homogeneity that
only involves artinian rings and show that residue field extensions are
harmless for stacks in the fppf topology. In Section~\ref{sec:fv_fs} we then
relate formal
versality, formal smoothness and vanishing of $\Exal$.

In Section~\ref{sec:vl} we study additive functors and their vanishing
loci. This is applied in Section~\ref{sec:op_fv} where we give conditions on
$\Exal$ that assure that the
locus of formal versality is open. The results are then assembled in
Theorem~\ref{thm:fv_art_flenner}.

In Section~\ref{sec:aut_def_obs} we repeat the definitions of automorphisms,
deformations and
minimal obstruction theories from~\cite{hallj_openness_coh}. In
Section~\ref{sec:rel_conds}, we
give conditions on $\Aut$, $\Def$ and $\Obs$ that imply the corresponding
conditions on $\Exal$ needed in Theorem~\ref{thm:fv_art_flenner}. In
Section~\ref{sec:obs-theories}
we introduce $n$-step obstruction theories.
In Section~\ref{sec:no-obs-theory} we formulate the conditions on obstructions
without using
linear obstruction theories, as in~\cite{MR0260746}.
Finally, in Section~\ref{sec:mainthm} we prove the Main Theorem. Comparisons
with other criteria are given in Section~\ref{sec:comparison}.

\subsection*{Notation}
We follow standard conventions and notation. In particular, we adhere to the
notation of~\cite{hallj_openness_coh}.
Recall that if $T$ is a scheme, then a point $t\in |T|$ is of \emph{finite type}
if
$\spec(\kappa(t))\to T$ is of finite type. Points of finite type are locally
closed. A point of a Jacobson scheme is of finite type if and only if it is
closed. If $f\colon X\to Y$ is of finite type and $x\in |X|$ is of finite type, then
$f(x)\in |Y|$ is of finite type.

\subsection*{Acknowledgment}
We would like to thank M.\ Artin for encouraging comments and
L.\ Moret--Bailly for answering a question on MathOverflow about Jacobson
schemes.

\section{Homogeneity, limit preservation, and extensions}\label{sec:homogeneity}
In this section, we review the concept of homogeneity---a
generalization of Schlessinger's Conditions that we attribute to
J.\ Wise \cite[\S2]{2011arXiv1111.4200W}---in the formalism of 
\cite[\S\S1--2]{hallj_openness_coh}. We will also briefly discuss limit
preservation and extensions.

Fix a scheme $S$. An 
$S$-\fndefn{groupoid} is a category $X$, together with a functor $a_X
\colon X \to \SCH{S}$ that is fibered in groupoids. A $1$-morphism of
$S$-groupoids $\Phi \colon (Y,a_Y) \to 
(Z,a_Z)$ is a functor between categories $Y$ and $Z$ that
commutes strictly over $\SCH{S}$. We will typically refer to an
$S$-groupoid $(X,a_X)$ as ``$X$''.

An $X$-\fndefn{scheme} is a pair $(T,\sigma_T)$, where $T$ is an 
$S$-scheme and $\sigma_T \colon \SCH{T} \to X$ is a $1$-morphism of
$S$-groupoids. A morphism of $X$-schemes $U \to V$ is a morphism of
$S$-schemes $f \colon {U} \to {V}$ (which canonically determines a
$1$-morphism of $S$-groupoids $\SCH{f} \colon \SCH{U} \to \SCH{V}$)
together with a $2$-morphism $\alpha \colon \sigma_U  \Rightarrow
\sigma_V\circ \SCH{f}$. The collection of all 
$X$-schemes forms 
a $1$-category, which we denote as $\SCH{X}$. It is readily seen that
$\SCH{X}$ is an $S$-groupoid and that there is a natural equivalence
of $S$-groupoids $\SCH{X}\to X$.
For a $1$-morphism of $S$-groupoids $\Phi \colon Y \to Z$ there is an
induced functor $\SCH{\Phi} \colon \SCH{Y} \to \SCH{Z}$.

We will be interested in the following classes of morphisms of $S$-schemes:
\begin{itemize}
\item[$\HNIL$] -- locally nilpotent closed immersions,
\item[$\HCL$] -- closed immersions,
\item[$\HrNIL$] -- morphisms $X\to Y$ such that there exists $(X_0\to X)\in \HNIL$ with the composition $(X_0\to X\to Y)\in \HNIL$,
\item[$\HrCL$] -- morphisms $X\to Y$ such that there exists $(X_0\to X)\in \HNIL$ with the composition $(X_0\to X\to Y)\in \HCL$,
\item[$\HArt$] -- morphisms between local artinian schemes of finite type over
  $S$,
\item[$\HArtINSEP$] -- $\HArt$-morphisms with purely inseparable residue
  field extensions,
\item[$\HArtTriv$] -- $\HArt$-morphisms with trivial residue field
  extensions,
\item[$\HFIN$] -- finite morphisms,
\item[$\HINT$] -- integral morphisms,
\item[$\HA$] -- affine morphisms.
\end{itemize}
We certainly have a containment
of classes of morphisms of $S$-schemes:
\[
\xymatrix@-1.5pc{ \HNIL\ar@{}[r]|-{\subset}\ar@{}[d]|-{\cap}
  & \HCL \ar@{}[d]|-{\cap} & & \\
  \HrNIL\ar@{}[r]|-{\subset} & \HrCL\ar@{}[r]|-{\subset}
  & \HINT\ar@{}[r]|-{\subset} & \HA. \\
  \HArtTriv \ar@{}[u]|-{\cup}\ar@{}[r]|-{\subset}
  & \HArtINSEP \ar@{}[r]|-{\subset} & \HArt \ar@{}[u]|-{\cup} } 
\]
Note that for a morphism $X\to Y$ of locally noetherian $S$-schemes, the
properties $\HrNIL$ and $\HrCL$ simply mean that $\red{X}\to Y$ is $\HNIL$
and $\HCL$ respectively.

Let $P\subseteq \HA$ be a class of morphisms. In
\cite[\S1]{hallj_openness_coh} the notion of a $P$-homogeneous
$1$-morphism of $S$-groupoids $\Phi \colon Y \to Z$ was defined. We say
that an $S$-groupoid is $P$-homogeneous if its structure $1$-morphism is. We will
not recall the definition in full (it is somewhat lengthy), but we
will give an explicit description in Lemma~\ref{lem:hom_approx} for
$S$-groupoids that are stacks in the Zariski topology. We will also show that
for \emph{limit preserving} Zariski stacks, it is enough to verify
$P$-homogeneity for $S$-schemes of finite type.

\begin{defn}\label{def:limit-pres}
Let $X$ be an $S$-groupoid that is a Zariski stack. We say that $X$ is \fndefn{limit
  preserving} if for any inverse system of
affine $S$-schemes $\{\spec A_j\}_{j\in J}$, with limit $\spec A$, the natural
functor:
\[
\varinjlim_j \FIB{X}{\spec A_j} \to \FIB{X}{\spec A}
\]
is an equivalence of categories \cite[\S1]{MR0399094}.
\end{defn}
The definition
just given also agrees with the definition in 
\cite[\S3]{hallj_openness_coh}. When $X$ is an algebraic stack, then $X$
is limit preserving if and only if $X\to S$ is locally of finite presentation
\cite[Prop.~4.15]{MR1771927}.

\begin{lem}\label{lem:hom_approx}
Let $S$ be a scheme. Consider a class of morphisms $P \subset \HA$ that is local
for the Zariski topology. Let $X$ be an $S$-groupoid that is a stack for the
Zariski topology. Then the following conditions are equivalent. 
\begin{enumerate}
\item $X$ is $P$-homogeneous.
\item\label{lem:hom_approx:item:aff} For any diagram of affine schemes
  $[\spec B \leftarrow \spec A \xrightarrow{{i}} \spec A']$, where ${i}$
  is a nilpotent closed immersion and $\spec A \to \spec B$ is $P$, the
  natural functor:
\[
\FIB{X}{\spec (A'\times_A B)} \to \FIB{X}{\spec A'}
\times_{\FIB{X}{\spec A}} \FIB{X}{\spec B}
\]
is an equivalence of categories. 
\end{enumerate}
If, in addition, $X$ is limit preserving, and $P\in
\{\HNIL,\HCL,\HrNIL,\HrCL,\HINT,\HA\}$, then in
\itemref{lem:hom_approx:item:aff} it suffices to take $\spec A$, $\spec A'$,
and $\spec B$ to be locally of finite presentation over $S$. In particular,
$\HINT$-homogeneity is equivalent to $\HFIN$-homogeneity and,
if $S$ is locally noetherian, then $\HrCL$-homogeneity is equivalent to the
condition $(\mathrm{S}1')$ of \cite[2.3]{MR0399094}.
\end{lem}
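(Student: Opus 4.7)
The plan is a two-stage reduction. First I would prove (1)$\Leftrightarrow$\itemref{lem:hom_approx:item:aff} by using the Zariski-stack property of $X$ together with Zariski-locality of $P$ to cut the (somewhat lengthy) general definition of $P$-homogeneity down to the affine pushout form. Then, assuming limit preservation and $P\in\{\HNIL,\HCL,\HrNIL,\HrCL,\HINT,\HA\}$, I would use noetherian approximation to write any affine diagram as a filtered system of finite-presentation diagrams of the same shape, and pass the equivalence through the limit using limit preservation of $X$.

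For (1)$\Rightarrow$\itemref{lem:hom_approx:item:aff} the affine pushout is an instance of the general definition from \cite[\S1]{hallj_openness_coh}, which asks that
\[
\FIB{X}{T'\sqcup_T W}\to \FIB{X}{T'}\times_{\FIB{X}{T}}\FIB{X}{W}
\]
be an equivalence whenever $T\to W$ lies in $P$ and $T\hookrightarrow T'$ is a nilpotent closed immersion. For the converse, observe that $P\subset \HA$ and nilpotent closed immersions are affine, so both $W\to T$ and $T'\to T$ are affine. An affine open cover $\{\spec A_i\}$ of $T$ therefore produces affine covers of $W$ by $\spec B_i$ and of $T'$ by $\spec A'_i$, together with a Zariski cover of $T'\sqcup_T W$ by $\spec(A'_i\times_{A_i} B_i)$. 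Since $P$ is Zariski-local each $A_i\to B_i$ lies in $P$, and since $X$ is a Zariski stack both sides of the putative equivalence satisfy Zariski descent on $T$; the affine case \itemref{lem:hom_approx:item:aff} therefore glues to the general equivalence.

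For the finite-presentation reduction, I would approximate any diagram $[\spec B\leftarrow \spec A \hookrightarrow \spec A']$ as in \itemref{lem:hom_approx:item:aff} by a filtered system of diagrams $[\spec B_\lambda\leftarrow \spec A_\lambda\hookrightarrow \spec A'_\lambda]$ of the same shape, with all three schemes of finite presentation over $S$, the left arrow still in $P$, and the immersion still nilpotent. Class by class this is routine: $\HA$ is automatic; $\HNIL$ and $\HCL$ are preserved by noetherian approximation of source and target; $\HINT$ follows by first writing an integral map as a filtered colimit of finite (hence finitely presented) maps and then approximating the base; and $\HrNIL$, $\HrCL$ reduce to $\HNIL$, $\HCL$ by approximating, at a sufficiently high level, a fixed nilpotent thickening $\red{\spec A}\hookrightarrow \spec A$ that witnesses the defining property. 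Since filtered colimits commute with the fibre product of rings, $\spec(A'\times_A B)=\varprojlim_\lambda \spec(A'_\lambda\times_{A_\lambda}B_\lambda)$, and limit preservation of $X$ identifies both sides of the equivalence in \itemref{lem:hom_approx:item:aff} for the original diagram with the filtered colimits of the corresponding sides for the approximations. Hence \itemref{lem:hom_approx:item:aff} for finite-presentation diagrams implies \itemref{lem:hom_approx:item:aff} in general.

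The two stated consequences then follow at once: a finite-type integral morphism is finite, so every finite-presentation $\HINT$-diagram is an $\HFIN$-diagram, and $\HINT$-homogeneity coincides with $\HFIN$-homogeneity; and when $S$ is locally noetherian, the finite-presentation $\HrCL$-diagrams coincide with the diagrams appearing in $(\mathrm{S}1')$ of \cite[2.3]{MR0399094}, so $\HrCL$-homogeneity is the condition $(\mathrm{S}1')$ after unwinding definitions. The main technical obstacle I foresee is the approximation step for $\HrNIL$ and $\HrCL$, where one must realize the existence of a nilpotent thickening $X_0\hookrightarrow \spec A$ with the required composition property at some finite-presentation stage; this requires a careful but standard combination of noetherian approximation applied to $\red{A}$, to the nilpotent ideal defining $X_0$, and to the map into $\spec B$.
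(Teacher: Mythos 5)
Your proposal is correct and follows essentially the same route as the paper: the equivalence of (1) and \itemref{lem:hom_approx:item:aff} is unwound from the definition using the Zariski-stack and Zariski-locality hypotheses, and the finite-presentation reduction is done by approximating the diagram $[\spec B \leftarrow \spec A \hookrightarrow \spec A']$ by a filtered system of finite-presentation diagrams of the same shape and passing through the limit via limit preservation, noting that $A'\times_A B$ is the colimit of the $A'_\lambda\times_{A_\lambda}B_\lambda$. The approximation step you flag as the "main technical obstacle" is exactly the content of the paper's appendix (Lemma~\ref{lem:integral-approx} and Proposition~\ref{prop:pushout-approx}), where the filtered system is built so as to preserve the relevant properties rather than deduced for an arbitrary approximation; it is somewhat less routine than your sketch suggests, but your outline of how to handle each class is the right one.
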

\begin{proof}
The first part follows from the definitions. To see the second part, assume
that $X$ is limit preserving and that $P\in
\{\HNIL,\HCL,\HrNIL,\HrCL,\HINT,\HA\}$. As $X$ is a Zariski stack we may assume
that $S=\spec(R)$ is affine. Let $[\spec B \leftarrow \spec A
  \xrightarrow{{i}} \spec A']$ be a diagram as in
\itemref{lem:hom_approx:item:aff} and let $B'=A'\times_A B$. Then, by
Proposition~\ref{prop:pushout-approx}, it can be written as an inverse limit of
diagrams $[\spec B_\lambda \leftarrow \spec
  A_\lambda \xrightarrow{{i}} \spec A'_\lambda]$ of finite presentation over
$S$ and, furthermore, $\spec B' $ is the inverse limit of
$\spec B'_\lambda$, where $B'_\lambda=A'_\lambda\times_{A_\lambda} B_\lambda$.
The result then follows from our assumption that $X$ is limit preserving.
\end{proof}

By \cite[Prop.\
2.1]{2011arXiv1111.4200W}, any algebraic stack is $\HA$-homogeneous.
It is easily verified, as is done in~\loccit, that if the stack is not
necessarily algebraic but has representable diagonal, then the functor above is
at least fully faithful. Moreover,
$\HrCL$-homogeneity is equivalent to Artin's \emph{semi}-homogeneity
condition \cite[2.2(S1a)]{MR0399094} for $X$, its diagonal $\Delta_X$,
and its double diagonal $\Delta_{\Delta_X}$. 

The main computational tool that $P$-homogeneity brings is \cite[Lem.\
1.4]{hallj_openness_coh}, which we now recall. 
\begin{lem}\label{lem:homog_pushouts_int} 
  Let $S$ be a scheme and let  $P\subset \HA$ be a class of morphisms.
  Let $X$ be a $P$-homogeneous $S$-groupoid. Consider a diagram of
  $X$-schemes $[V \xleftarrow{p} T \xrightarrow{{i}} T']$, where
  ${i}$ is a locally nilpotent closed immersion and $p$ is
  $P$. Then there exists a cocartesian diagram in the category of
  $X$-schemes: 
  \[
  \xymatrix@-0.8pc{T \ar@{(->}[r]^{i} \ar[d]_p & T'  \ar[d]^{p'} \\ V
      \ar@{(->}[r]^{{i}'}  & V'. }
  \]
  This diagram is also cocartesian in the category of
  $S$-schemes, the morphism ${i}'$ is a locally nilpotent closed
  immersion, $p'$ is affine, and the induced homomorphism of sheaves: 
  \[
  \Orb_{V'} \to {i}'_* \Orb_V \times_{p'_*{i}_*\Orb_T} p'_*\Orb_{T'}
  \]
  is an isomorphism. Moreover if $P\in \{\HNIL,\HCL,\HrNIL,\HrCL,\HFIN,\HINT,
  \HArt,\HArtINSEP,\HArtTriv\}$, then $p'$ is $P$.  
\end{lem}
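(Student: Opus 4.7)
The plan is a two-step construction: first build the underlying cocartesian square in the category of $S$-schemes, then promote it to a cocartesian square in $\SCH{X}$ using $P$-homogeneity, and finally read off the preservation properties of $p'$ for the specialised classes.

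For step one, I would carry out a Ferrand-type pushout. Since $i\colon T\hookrightarrow T'$ is a locally nilpotent closed immersion, it induces a homeomorphism on underlying topological spaces, so the set-theoretic pushout has the same underlying space as $V$. One then takes
\[
\Orb_{V'} := {i}'_* \Orb_V \times_{p'_* i_* \Orb_T} p'_*\Orb_{T'}
\]
as the structure sheaf. Working affine-locally on $V$, if $V=\spec C$, $T=\spec B$, $T'=\spec B'$ with $p$ corresponding to $C\to B$ and $i$ to a surjection $B'\twoheadrightarrow B$ with nilpotent kernel, then $V'=\spec(C\times_B B')$; a routine check shows that $B'\to C\times_B B'$ has the same nilpotent ideal as $B'\to B$, that $C\times_B B'\to B'$ is the base change of $C\to B$, and that the diagram is cocartesian in commutative rings. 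Globalising this produces the scheme $V'$ with ${i}'$ a locally nilpotent closed immersion and $p'$ affine, and shows the diagram is cocartesian in $\SCH{S}$.

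For step two, I would equip $V'$ with an $X$-scheme structure $\sigma_{V'}$. The diagram $[V\xleftarrow{p} T\xrightarrow{{i}} T']$ is exactly of the shape appearing in the definition of $P$-homogeneity (with $p\in P\subseteq \HA$ and $i$ a locally nilpotent closed immersion), so by Lemma~\ref{lem:hom_approx} the restriction functor
\[
\FIB{X}{V'}\to \FIB{X}{T'}\times_{\FIB{X}{T}}\FIB{X}{V}
\]
is an equivalence. The pair $(\sigma_{T'},\sigma_V)$ together with the $2$-morphism provided by the $X$-scheme structure of $T$ gives an object of the right-hand side, whose essentially unique preimage is $\sigma_{V'}$. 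The cocartesian property in $\SCH{X}$ then follows: given a test $X$-scheme $W$ with compatible maps from $T'$ and $V$, the scheme-theoretic cocartesian property of step one supplies the underlying $S$-morphism $V'\to W$, while the full faithfulness part of homogeneity (applied to maps into $W$) promotes it uniquely to a morphism of $X$-schemes.

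For step three, the preservation of $p'$ within $P\in\{\HNIL,\HCL,\HrNIL,\HrCL,\HFIN,\HINT,\HArt,\HArtINSEP,\HArtTriv\}$ is a case analysis affine-locally on the fibre product $C\times_B B'$. Stability of closed immersions, nilpotence, finiteness and integrality under this pushout is a direct ring-theoretic check; for the artinian classes one observes that $C\times_B B'$ is local artinian of finite type over $S$ when both $C$ and $B'$ are, and that the residue field extension from $B'$ to $C\times_B B'$ coincides with $\kappa(B)\to\kappa(C)$, so triviality and pure inseparability descend from $p$ to $p'$. The main obstacle, as always with such constructions, is step one: correctly gluing the affine Ferrand pushouts to a scheme and verifying the explicit structure-sheaf formula; steps two and three are then essentially formal consequences of $P$-homogeneity and commutative algebra.
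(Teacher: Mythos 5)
The scheme-theoretic part of your construction (step one) and the case-by-case verification that $p'$ stays in $P$ (step three) are sound reconstructions of material the paper simply cites: the paper's entire proof is ``everything except the last claim is \cite[Lem.~1.4]{hallj_openness_coh}; the last claim is well known (Ferrand)''. Your slips there are cosmetic (there is no map $B'\to C\times_B B'$ --- you mean that $\ker(C\times_B B'\to C)$ is identified with $\ker(B'\to B)$; and the residue field extension of $p'$ is $\kappa(\mathfrak m_C)\to\kappa(\mathfrak m_B)$, not the reverse). The genuine problem is step two, where you produce the $X$-structure on $V'$ by invoking Lemma~\ref{lem:hom_approx}. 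That lemma characterizes $P$-homogeneity as the equivalence $\FIB{X}{\spec(A'\times_A B)}\simeq \FIB{X}{\spec A'}\times_{\FIB{X}{\spec A}}\FIB{X}{\spec B}$ only under extra hypotheses: $X$ must be a stack for the Zariski topology, $P$ must be Zariski-local, and the diagram must consist of affine schemes. None of these appear in Lemma~\ref{lem:homog_pushouts_int}, whose $X$ is an arbitrary $P$-homogeneous $S$-groupoid and whose $V$, $T$, $T'$ are only assumed to satisfy ``$p$ affine'', not to be affine. Even granting a Zariski stack, applying the affine characterization forces you to localize on $V$, build local pushouts, and reglue both the scheme and the $X$-structure, which you do not address.

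The deeper issue is that your argument runs the paper's logic backwards. The definition of $P$-homogeneity is deliberately not stated here (``somewhat lengthy''); Lemma~\ref{lem:hom_approx} is \emph{derived} from that definition for Zariski stacks, and Lemma~\ref{lem:homog_pushouts_int} is essentially the definition unwound, which is why its proof is a citation of \cite[Lem.~1.4]{hallj_openness_coh}. So there is no legitimate route, within this paper, from Lemma~\ref{lem:hom_approx} to the existence of the cocartesian square in $\SCH{X}$ in the stated generality. To repair your step two you must either quote the actual definition of $P$-homogeneity from \cite{hallj_openness_coh} (from which the existence of the $X$-structure on $V'$ and the cocartesian property follow directly, making your Ferrand construction the content of that reference's proof), or restrict to the setting where Lemma~\ref{lem:hom_approx} applies --- which proves a weaker statement than the one asserted.
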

\begin{proof}
Everything except the last claim is \cite[Lem.\ 1.4]{hallj_openness_coh}.
The last claim is trivial except for $P\in
\{\HNIL,\HCL,\HFIN,\HINT\}$. In
these cases, however, it is well-known---see e.g.~\cite[5.6 (3)]{MR2044495}.
\end{proof}

\begin{rem}
Let $S$ be a noetherian scheme. If $[\spec B \leftarrow \spec A
  \xrightarrow{{i}} \spec A']$ is a diagram of schemes of finite type over a
scheme $S$ such that $\spec A\to \spec B$ is integral (or equivalently finite)
and $\spec A \to \spec A'$ is a locally nilpotent immersion, then $\spec
(B\times_A A')$ is of finite type over $S$. This follows from the fact that
$B\times_A A'\subset B\times A'$ is an integral
extension~\cite[Prop.~7.8]{MR0242802}.  On the other hand, if $\spec A\to \spec
B$ is only affine, then $\spec (B\times_A A')$ is typically not of finite type
over $S$. For example, if $B=k[x]$, $A=k[x,x^{-1}]$ and $A'=k[x,x^{-1},y]/y^2$,
then $B'=B\otimes_A A'=k[x,y,yx^{-1},yx^{-2},\dots]/(y,yx^{-1},\dots)^2$ which
is not of finite type over $S=\spec(k)$.
\end{rem}

Homogeneity supplies an $S$-groupoid with a quantity of linear data,
which we now recall from \cite[\S2]{hallj_openness_coh}. An
$X$-\fndefn{extension} is     
a square zero closed immersion of $X$-schemes ${i} \colon T
\hookrightarrow T'$. 
The collection of $X$-extensions forms a category,
which we denote as $\EXAL_X$. There is a natural functor $\EXAL_X \to
\SCH{X}$ that takes $({i} \colon T\hookrightarrow T')$ to $T$.

We denote by $\EXAL_X(T)$ the fiber
of the category $\EXAL_X$ over the $X$-scheme $T$---we call these the
$X$-extensions of $T$. There is a natural functor 
\[
\EXAL_X(T)^\opp \to \QCOH{T},\quad ({i} \colon T \hookrightarrow T')
\mapsto \ker({i}^{-1}\Orb_{T'} \to \Orb_T).
\]
We denote by $\EXAL_X(T,I)$ the fiber category of $\EXAL_X(T)$ over 
the quasi-coherent $\Orb_T$-module $I$---we refer to these as the 
$X$-extensions of $T$ by $I$. 

Let $W$ be a scheme and let $J$ be a quasi-coherent $\Orb_W$-module. We let $W\extn{J}$
denote the $W$-scheme $\underline{\spec}_W (\Orb_W \extn{J})$ with structure
morphism $\trvret{W}{J} \colon W\extn{J} \to W$. If $W$ is an $X$-scheme, we
consider $W\extn{J}$ as an $X$-scheme via $\trvret{W}{J}$. The $X$-extension
$W\hookrightarrow W\extn{J}$ is thus \emph{trivial} in the sense that it admits
an $X$-retraction.

By \cite[Prop.\
2.3]{hallj_openness_coh}, if the $S$-groupoid $X$ is
$\HNIL$-homogeneous, then the groupoid $\EXAL_X(T,I)$ is a Picard
category.  Denote the set of isomorphism classes of the category
$\EXAL_X(T,I)$ by $\Exal_X(T,I)$. Thus, we have additive functors 
\begin{align*}
  \Der_X(T,-) &\colon \QCOH{T} \to \AB,\quad I \mapsto
  \Aut_{\EXAL_X(T,I)}(T\extn{I})\\
  \Exal_X(T,-) &\colon \QCOH{T} \to \AB,\quad I \mapsto \Exal_X(T,I). 
\end{align*}
We now record here the following easy
consequences of 
\cite[2.2--2.5 \& 3.4]{hallj_openness_coh}. 
\begin{lem}\label{lem:der_exal_props_record}
  Let $S$ be a scheme, let $X$ be an $S$-groupoid, and let $T$ be an
  $X$-scheme.
  \begin{enumerate}
  \item\label{lem:der_exal_props_record:item:trv} Let $I$ be a quasi-coherent
    $\Orb_T$-module. Then $\Exal_X(T,I) = 0$ if and only if every
    $X$-extension ${i} \colon T\hookrightarrow T'$ of $T$ by $I$ admits an $X$-retraction.
  \item\label{lem:der_exal_props_record:item:he} If $X$ is $\HrNIL$-homogeneous,
    then the functor $M\mapsto \Exal_X(T,M)$ is half-exact. 
  \item\label{lem:der_exal_props_record:item:lp} Suppose that $X$ is
    $\HNIL$-homogeneous and limit preserving. If $T$ is locally of finite
    presentation over $S$,
    then the functor $M \mapsto \Exal_X(T,M)$ preserves direct limits.
  \item\label{lem:der_exal_props_record:item:et}
    Let $p \colon U \to T$ be an \emph{affine} \'etale morphism and let $N$
    be a quasi-coherent $\Orb_U$-module. Then there is a natural functor $\psi \colon \EXAL_X(T,p_*N) \to
    \EXAL_X(U,N)$. If $({i} \colon T \hookrightarrow T')\in \EXAL_X(T,p_*N)$
    with image $({j} \colon U \hookrightarrow U')\in\EXAL_X(U,N)$, then there is
    a cartesian diagram of $X$-schemes
    \[
    \xymatrix@-0.8pc{U \ar@{(->}[r]^{{j}} \ar[d]_p & U'  \ar[d]^{p'} \\
      T \ar@{(->}[r]^{{i}} & T',}
    \]
    which is cocartesian as a diagram of $S$-schemes.
    If $X$ is $\HA$-homogeneous, then $\psi$ is an equivalence.
  \end{enumerate}
\end{lem}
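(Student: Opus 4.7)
The plan is to treat each of the four items as a short consequence of the Picard-category framework established in \cite[\S2]{hallj_openness_coh}, combined with the cited homogeneity hypotheses and Lemma~\ref{lem:homog_pushouts_int}.

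For \itemref{lem:der_exal_props_record:item:trv}, I would argue directly from the Picard structure: the trivial extension $T\hookrightarrow T\extn{I}$ is the zero object and visibly admits the structural retraction $\trvret{T}{I}$; conversely, any $X$-extension ${i}\colon T\hookrightarrow T'$ by $I$ admitting an $X$-retraction is $X$-isomorphic to the trivial one via the induced splitting, so $[{i}]=0$. For \itemref{lem:der_exal_props_record:item:he}, given $0\to I'\to I\to I''\to 0$, half-exactness of the Picard-valued functor $\Exal_X(T,-)$ reduces to showing that a class $\xi=[{i}\colon T\hookrightarrow T']\in\Exal_X(T,I)$ whose pushforward along $I\twoheadrightarrow I''$ is trivial comes from $\Exal_X(T,I')$ via pushforward along $I'\hookrightarrow I$: the trivialization yields an $X$-retraction of the $I''$-quotient $T\hookrightarrow T'/I'$ back to $T$, and $\HrNIL$-homogeneity via Lemma~\ref{lem:homog_pushouts_int} then produces, by the standard Baer-sum manipulation, an $X$-extension $\tilde T$ of $T$ by $I'$ whose pushforward recovers $\xi$.

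For \itemref{lem:der_exal_props_record:item:lp}, if $I=\varinjlim_\lambda I_\lambda$ is a filtered colimit, then $T\extn{I}=\varprojlim_\lambda T\extn{I_\lambda}$ as $S$-schemes with each $T\extn{I_\lambda}$ locally of finite presentation over $S$; limit preservation of $X$ then implies that both objects and morphisms of $\EXAL_X(T,I)$ descend to a finite stage, so $\varinjlim_\lambda \Exal_X(T,I_\lambda)\to\Exal_X(T,I)$ is bijective. For \itemref{lem:der_exal_props_record:item:et}, the functor $\psi$ is constructed by a local algebraic computation: writing $T=\spec R$, $T'=\spec R'=\spec(R\oplus p_*N)$ and $U=\spec A$, one sets $U'=\spec A'$ with $A'=A\oplus N$ equipped with the $R'$-algebra structure induced by $R\to A$ and the identification of $p_*N$ with $N$ as underlying abelian groups. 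A direct verification gives $U=U'\times_{T'}T$ as $X$-schemes and $R'=A'\times_A R$, establishing the cartesian and cocartesian claims, and the construction globalizes since $p$ is affine \'etale. When $X$ is $\HA$-homogeneous, the inverse functor is furnished by Lemma~\ref{lem:homog_pushouts_int} applied to the span $[T\xleftarrow{p} U\xhookrightarrow{{j}} U']$; the ideal of the resulting pushout is identified with $p_*N$ via the same pullback formula.

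The main obstacle throughout is bookkeeping: identifying ideals correctly in \itemref{lem:der_exal_props_record:item:et}, and matching the abstract Picard operations with the concrete homogeneity pushouts in \itemref{lem:der_exal_props_record:item:he}. No single step is genuinely difficult, each reducing to the explicit affine description of the Picard structure from \cite[\S2]{hallj_openness_coh} combined with the relevant instance of Lemma~\ref{lem:homog_pushouts_int}.
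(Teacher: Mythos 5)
The paper offers no proof of this lemma at all---it is recorded as an ``easy consequence of \cite[2.2--2.5 \& 3.4]{hallj_openness_coh}''---so your sketch is being measured against the arguments in that reference. Your treatments of \itemref{lem:der_exal_props_record:item:trv} and \itemref{lem:der_exal_props_record:item:he} are correct and are the standard ones: a retraction of ${i}\colon T\hookrightarrow T'$ splits $\Orb_{T'}\cong\Orb_T\extn{I}$ as $X$-extensions, and for half-exactness the retraction of the $I''$-quotient $T''\subset T'$ feeds the $\HrNIL$-span $[T\xleftarrow{s}T''\hookrightarrow T']$ into Lemma~\ref{lem:homog_pushouts_int} to produce the required extension by $I'$. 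Item \itemref{lem:der_exal_props_record:item:lp} is thinner than it should be---the point is not that $T\extn{I}=\varprojlim T\extn{I_\lambda}$ (that only handles the trivial extension) but that an arbitrary square-zero extension $T'$ by $I$, together with its $X$-structure $\SCH{T'}\to X$, descends to a finitely generated (hence finitely presented, after a further reduction) submodule stage; still, the idea is the right one.

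Item \itemref{lem:der_exal_props_record:item:et} has a genuine gap: you write $T'=\spec(R\oplus p_*N)$ and $A'=A\oplus N$ ``with the $R'$-algebra structure induced by $R\to A$''. This presumes the extension $R'\twoheadrightarrow R$ is \emph{split}; for a non-split extension there is no ring homomorphism $R\to R'$, so the proposed $R'$-algebra structure on $A\oplus N$ is undefined, and your construction of $\psi$ only exists on the trivial objects of $\EXAL_X(T,p_*N)$. The \'etale hypothesis on $p$---which you invoke only for ``globalization''---is what is actually needed to build $\psi$ in general: by the infinitesimal lifting property there is a unique flat (\'etale) lift $A''$ of $A$ over $R'$, which is an extension of $A$ by $\ker(A''\to A)=p^*p_*N=N\otimes_R A$, and one then pushes this extension forward along the (surjective, but generally non-injective) counit $p^*p_*N\to N$ to obtain $A'$; the composite $R'\to A''\to A'$ supplies the map $U'\to T'$ carrying the $X$-structure, and the cartesian/cocartesian claims follow from the computation $\ker(R'\to R)\cdot A'=N$ together with the five lemma applied to $R'\to R\times_A A'$. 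Without this lifting step the functor $\psi$, and hence the whole of \itemref{lem:der_exal_props_record:item:et}, is not established.
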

Finally, we give conditions that imply $\HArt$-homogeneity.
\begin{lem}\label{lem:HArt}
Let $S$ be a scheme and let $X$ be an $S$-groupoid that is $\HArtTriv$-homogeneous.
Assume that one of the following conditions are satisfied.
\begin{enumerate}
\item $X$ is a stack in the fppf topology.\label{lem:HArt:fppf}
\item $X$ is a stack in the \'etale topology and
  $\HArtINSEP$-homogeneous.\label{lem:HArt:et+insephom}
\item $S$ is a $\Q$-scheme and $X$ is a stack in the \'etale
  topology.\label{lem:HArt:et+char0}
\end{enumerate}
Then $X$ is $\HArt$-homogeneous.
\end{lem}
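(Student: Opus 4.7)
The plan is to reduce to the trivial residue field case (covered by $\HArtTriv$-homogeneity) by trivializing the extension $k_B\subseteq k_A$ with a suitable finite flat (resp.\ finite étale) local $B$-algebra $\tilde B$, and then descending along $\spec\tilde B\to\spec B$ using the stack condition. Since $B$ is artinian local and therefore henselian, we can build such a $\tilde B$ for any prescribed finite extension of $k_B$ by successively lifting minimal polynomials. In case~\itemref{lem:HArt:et+char0} the extension $k_A/k_B$ is separable, so we take $\tilde B$ finite étale with residue field the Galois closure of $k_A/k_B$. In case~\itemref{lem:HArt:et+insephom} we first take $\tilde B^s$ finite étale to trivialize the separable subextension $k_A^s/k_B$ (with residue the Galois closure), reducing to a diagram over $\spec\tilde B^s$ with purely inseparable residue extension $k_A/k_{\tilde B^s}$, which is handled directly by $\HArtINSEP$-homogeneity. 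In case~\itemref{lem:HArt:fppf} a general finite flat local $\tilde B$ suffices.

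By henselianity of $B$, the ring map $B\to A$ lifts uniquely to a ring map $\tilde B\to A$ compatible with $B\to\tilde B$, so that $[\spec\tilde B\leftarrow\spec A\hookrightarrow\spec A']$ is an $\HArt$-diagram with \emph{trivial} residue field extension $k_A\to k_A$. Applying $\HArtTriv$-homogeneity yields the equivalence
\[
\FIB{X}{\spec\tilde C}\xrightarrow{\sim}\FIB{X}{\spec A'}\times_{\FIB{X}{\spec A}}\FIB{X}{\spec\tilde B},\qquad \tilde C:=A'\times_A\tilde B.
\]
The fppf (resp.\ étale) stack condition expresses $\FIB{X}{\spec B}$ as the $2$-limit over the Čech nerve of $\spec\tilde B\to\spec B$. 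Because $\tilde B$ was chosen with a sufficiently large residue field, each $\spec\tilde B^{\otimes_B(n+1)}$ decomposes into local factors whose residue extensions over $\tilde B$ are trivial; applying $\HArtTriv$-homogeneity level by level converts the gluing data on the $\tilde B$-side into compatible data on the $\tilde C$-side, and the stack condition reassembles this into an object of $\FIB{X}{\spec C}$.

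The main subtlety is that $\spec\tilde C\to\spec C$ is in general \emph{not} flat --- for instance, when $B=k_B$, $A=k_A$ and $A'=k_A[\epsilon]/(\epsilon^2)$, one has $\tilde C=A'$, whose $C$-length $2[k_A:k_B]$ does not in general divide $\dim_{k_B}C=1+[k_A:k_B]$. So classical fppf descent along this map is unavailable; instead the descent is performed along the genuine fppf (resp.\ étale) cover $\spec\tilde B\to\spec B$ of the closed subscheme $\spec B\hookrightarrow\spec C$, and transferred to the $\tilde C$-side level by level of the Čech nerve through $\HArtTriv$-homogeneity. This is the place where it is crucial that we have \emph{homogeneity} --- not merely semi-homogeneity --- at the artinian level, together with the fppf (or étale) stack property of $X$.
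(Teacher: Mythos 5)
Your reduction has a genuine gap at the reassembly step. You only ever produce a flat cover of $\spec B$, namely $\spec\widetilde{B}\to\spec B$, whereas the object you need to construct lives over $\spec C$ with $C=A'\times_A B$. As you observe yourself, $\spec\widetilde{C}\to\spec C$ with $\widetilde{C}=A'\times_A\widetilde{B}$ is not flat, so it is not a cover in either topology; and descent data along $\spec\widetilde{B}\to\spec B$ can only be reassembled, by the stack axiom, into an object of $\FIB{X}{\spec B}$ --- not of $\FIB{X}{\spec C}$. The proposed ``transfer to the $\widetilde{C}$-side level by level through $\HArtTriv$-homogeneity'' is exactly the homogeneity statement for $C$ over $B$ that you are trying to prove, so the argument is circular at this point. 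The fix is to build the flat cover upstairs: since $\spec B\hookrightarrow\spec C$ is a nilpotent closed immersion, $C$ is itself artinian local with residue field $k_B$, so one can choose a finite flat (resp.\ finite \'etale) local extension $C\hookrightarrow\widetilde{C}$ with prescribed residue field $K$ directly, and then base-change the \emph{whole} diagram --- including $A'$, which is a $C$-algebra via $C\to A'$ even though it is not a $B$-algebra --- along $C\to\widetilde{C}$. Flatness guarantees that the base-changed diagram is again a fiber-product diagram, and honest fppf (resp.\ \'etale) descent along $\spec\widetilde{C}\to\spec C$ then applies. This is what the paper does.

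A second, smaller error: the factorization $B\to\widetilde{B}\to A$ ``by henselianity'' does not exist in general. A local $B$-homomorphism $\widetilde{B}\to A$ induces a $k_B$-embedding $k_{\widetilde{B}}\hookrightarrow k_A$; if $k_{\widetilde{B}}$ is the Galois closure of $k_A/k_B$, such an embedding exists only when $k_A/k_B$ is already normal, and even then it is not unique. In the fppf case with inseparable residue extension there is no henselian lifting at all. One should instead base-change $A$ itself and accept that $\widetilde{A}=A\otimes_C\widetilde{C}$ is only semi-local, with each local factor having trivial (resp.\ purely inseparable) residue extension of $K$. Handling such non-local diagrams requires a preliminary Zariski-gluing step --- decomposing $A=\prod_i A_i$ and $A'=\prod_i A'_i$ and writing the pushout as an iterated fiber product of local artinian rings --- which reduces them to the local case covered by $\HArtTriv$- (resp.\ $\HArtINSEP$-) homogeneity; this step is also absent from your proposal.
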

\begin{proof}
We begin by noting that trivially \itemref{lem:HArt:et+char0} implies
\itemref{lem:HArt:et+insephom}. Next, let $[\spec B \leftarrow \spec A
  \hookrightarrow \spec A']$ be a diagram of local artinian $S$-schemes, with
$A' \twoheadrightarrow A$ a surjection of rings with nilpotent kernel, and
$B\to A$ finite so that $\spec A\to \spec B$ belongs to $\HArt$.
Let
$\spec B'=\spec(A'\times_A B)$ be the pushout of this diagram in the category
of $S$-schemes. We have to prove that the functor
\[
\varphi \colon \FIB{X}{\spec (A'\times_A B)} \to \FIB{X}{\spec A'}
\times_{\FIB{X}{\spec A}} \FIB{X}{\spec B}
\]
is an equivalence. Assume that $X$ is $\HArtTriv$-homogeneous
(resp.\ $\HArtINSEP$-homogeneous). We first show that $\varphi$ is an
equivalence when $A$, $A'$ and $B$ are not necessarily local but the residue
field extensions of $\spec(A)\to \spec(B)$ are trivial (resp.\ purely
inseparable). As $\spec B\hookrightarrow \spec B'$ is bijective, and $X$ is a
Zariski
stack, we can work locally on $\spec B'$ and assume that $\spec B'$ is
local. Then $\spec B$ is also local and if we let $A=\prod_{i=1}^n A_i$ and
$A'=\prod_{i=1}^n A'_i$ be decompositions such that $A'\twoheadrightarrow A_i$
factors through $A'_i$, then $B'=(A'_1\times_{A_1} B)\times_B
(A'_2\times_{A_2} B) \times_B \dots \times_B (A'_n\times_{A_n} B)$ is an
iterated fiber product of local artinian rings. The equivalence of $\varphi$ in
the non-local case thus follows from the local case.

If $X$ is a stack in the fppf (resp.\ \'etale) topology, then the equivalence of
$\varphi$ is a local question in the fppf (resp.\ \'etale) topology on $B'$
since fiber products of rings commute with flat base change. As $\spec
B\hookrightarrow \spec B'$ is a nilpotent closed immersion, the scheme $\spec
B'$ is local artinian and the residue fields of $B$ and $B'$ coincide. Choose a
finite (resp.\ finite separable) field extension $K/k_B$ such that the residue
fields of $k_A\otimes_{k_B} K$ are trivial (resp.\ purely inseparable)
extensions of $K$. There is then a local artinian ring $\widetilde{B}'$ and a
finite flat (resp.\ finite \'etale) extension $B'\hookrightarrow
\widetilde{B}'$ with $k_{\widetilde{B}'}=K$. Let $\widetilde{A}=A\otimes_{B'}
\widetilde{B}'$, $\widetilde{A}'=A'\otimes_{B'} \widetilde{B}'$ and
$\widetilde{B}=B\otimes_{B'}
\widetilde{B}'$. Then $\widetilde{A}$, $\widetilde{A}'$, $\widetilde{B}$
are artinian rings such that all residue fields equal $K$ (resp.\ are purely
inseparable extensions of $K$). Thus, equivalence of $\varphi$ follows from
the case treated above.
\end{proof}

\section{Formal versality and formal smoothness}\label{sec:fv_fs}
In this section we address a subtle point about the relationship
between formal versality and formal smoothness. To be precise, we
desire sufficient conditions for a family,
formally versal at all \emph{closed} points, to be formally \emph{smooth}.
In the algebraicity criterion for functors \cite[Thm.\ 5.3]{MR0260746} a
precise statement in this form is not present, but is addressed in
\opcit[, Lem.\ 5.4]. In the algebraicity criterion for 
groupoids \cite[Thm.\ 
5.3]{MR0399094} the relevant result is precisely stated in \opcit[, 
Prop.\ 4.2]. We do not, however, understand the proof.

In the notation of \loccit, to verify formal smoothness, the residue fields of
$A$ are not fixed. But the proof of \loccit\ relies on \opcit[, Thm.~3.3],
which requires that the residue field of $A$ is equal to the residue field of
$R$. If the residue field extension is separable, then it is possible to
conclude using \opcit[, Prop.~4.3], which uses \'etale localization of
obstruction theories.  We do not know how to complete the argument if
the residue field extension is inseparable. The essential problem is the
verification that formal versality is smooth-local.

We also wish to
point out that, in \loccit, the techniques of Artin approximation are
used via \opcit[, Prop.\ 3.3].
In this section we demonstrate that
excellence (or related) assumptions are irrelevant with our
formulation.

We begin this section with recalling, and refining, some results of
\cite[\S4]{hallj_openness_coh}. 
\begin{defn}
  Let $S$ be a scheme, let $X$ be an $S$-groupoid, and let $T$ be an $X$-scheme. Consider
  the following lifting problem: given a square zero closed immersion
  of $X$-schemes $Z_0 \hookrightarrow Z$ fitting into a
  commutative diagram of $X$-schemes:
  \[
  \xymatrix{ Z_0 \ar@{(x->}[d] \ar[r]^g  & T \ar[d]  \\ Z
    \ar[r] \ar@{-->}[ur]   & X.}
  \]
  The $X$-scheme $T$ is: 
  \begin{mydescription}
  \item[\fndefn{formally smooth}] if the lifting problem can always
    be solved \'etale-locally on $Z$;
  \item[\fndefn{formally smooth at $t\in |T|$}] if the
    lifting problem 
    can always be solved whenever the $X$-scheme $Z$ is local
    artinian, with closed point $z$, such that $g(z) = t$, and the
    field extension $\kappa(t) \subset \kappa(z)$ is finite;
  \item[\fndefn{formally versal at $t \in |T|$}] if the lifting
    problem can 
    always be solved whenever the $X$-scheme $Z$ is local artinian,
    with closed point $z$, such that $g(z) = t$ and $\kappa(t) \cong
    \kappa(z)$.
  \end{mydescription}
\end{defn}
We certainly have the following implications:
\begin{align*}
\text{formally smooth} &\Rightarrow \text{formally smooth at all $t\in
  |T|$}\\
    &\Rightarrow \text{formally versal at all $t\in |T|$}.
\end{align*}
It is readily observed that formal smoothness is smooth-local on the
source. Without stronger assumptions, it is not obvious to the
authors that formal versality is smooth-local on the source. Similarly,  
formal smoothness at $t$ and formal versality at $t$ are not
obviously equivalent. We will see, however, that these
subtleties vanish whenever the $S$-groupoid is $\HArt$-homogeneous.
For formal versality and formal smoothness at a point, it is sufficient
that liftings exist when
$\kappa(z) \cong g_*\ker (\Orb_{Z}\to \Orb_{Z_0})$.

\begin{lem}\label{lem:fsmooth_pt+repr}
  Let $S$ be a locally noetherian scheme and let $X$ be a limit preserving $S$-groupoid.
  Let $T$ be an $X$-scheme that is locally of
  finite type over $S$ and let $t\in |T|$ be a point such that:
  \begin{enumerate}
  \item $T$ is formally smooth at $t\in |T|$ as an $X$-scheme;
  \item the morphism $T \to X$ is representable by algebraic spaces.
  \end{enumerate}
  Let $W$ be an $X$-scheme. Then the morphism $T\times_X W\to W$ is smooth in a
  neighborhood of every point over $t$. In particular, if $T$ is formally
  smooth at every point of \emph{finite type}, then $T\to X$ is formally smooth.
\end{lem}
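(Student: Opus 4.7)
The plan is to verify smoothness of $T_W := T \times_X W \to W$ near each point $u$ over $t$ via the infinitesimal lifting criterion, leveraging the representability of $T \to X$ to translate lifting problems.

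First, since $T$ is locally of finite type over the locally noetherian $S$, it is locally of finite presentation over $S$. Combined with limit preservation of $X$, the morphism $T \to X$ is locally of finite presentation; by representability, $T_W$ is then a locally noetherian algebraic space and $T_W \to W$ is locally of finite presentation. Smoothness of such a morphism is open and étale-local on the source, so it suffices to prove smoothness at each $u \in |T_W|$ over $t$. Passing to an affine étale chart near $u$, we check the standard infinitesimal criterion: a square-zero extension $\spec R/I \hookrightarrow \spec R$ of artin local rings together with a compatible diagram $\spec R/I \to T_W$ (closed point $\mapsto u$) and $\spec R \to W$ pulls back along $T_W \to T$ to a lifting problem for $T \to X$ with closed point $\mapsto t$. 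By formal smoothness of $T \to X$ at $t$, such a lift exists whenever $\kappa(R)/\kappa(t)$ is finite.

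The main obstacle is the case where $\kappa(u)/\kappa(t)$, and hence $\kappa(R)/\kappa(t)$, is not finite. This is resolved by a noetherian descent argument powered by limit preservation of $X$. The artin local ring $R$ descends to an artin local $L$-algebra $R^\circ$ over a finitely generated subfield $\kappa(t) \subseteq L \subseteq \kappa(R)$, with $R \cong R^\circ \otimes_L \kappa(R)$; here one chooses $L$ to contain the structure constants of $R$ relative to a basis of $\mathfrak{m}_R$ over $\kappa(R)$. Using that $T \to X$ is limit preserving and representable, one descends both the map $\spec R \to W$ and the compatibility data to this smaller level, and then further approximates $L$ by its finite subextensions over $\kappa(t)$, applying formal smoothness of $T \to X$ at $t$ at each finite stage to produce the lift.

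For the \emph{In particular} statement, let $Z \subseteq T_W$ be the (closed) non-smooth locus of $T_W \to W$. By the first part applied at each finite-type $t \in T$ (where formal smoothness now holds by hypothesis), $Z$ is disjoint from the $T_W$-preimage of every finite-type point of $T$. Since $T$ is locally of finite type over the locally noetherian $S$, every point of $T$ specializes to a closed point, and closed points of $T$ are of finite type over $S$; combined with openness of smoothness (so that open sets contain all generalizations) and a standard specialization-lifting argument for the locally of finite presentation morphism $T_W \to T$, this forces $Z = \emptyset$. Hence $T_W \to W$ is smooth for every $X$-scheme $W$, which is exactly formal smoothness of $T \to X$.
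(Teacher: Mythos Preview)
Your argument has a genuine gap in the treatment of infinite residue field extensions. The step ``further approximates $L$ by its finite subextensions over $\kappa(t)$'' fails: a finitely generated field extension $L/\kappa(t)$ is in general \emph{not} a filtered union of finite subextensions. For example, if $L=\kappa(t)(x)$ with $x$ transcendental, the only finite subextension is $\kappa(t)$ itself. Hence you cannot reduce the lifting problem for $R^\circ$ to one with finite residue field over $\kappa(t)$, and the hypothesis of formal smoothness at $t$---which only provides lifts when $\kappa(z)/\kappa(t)$ is finite---does not apply. Descending the artinian test ring buys you nothing here.

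The paper resolves the issue by applying the limit argument to $W$ rather than to the test ring. After replacing $W$ by an affine open, limit preservation of $X$ lets one factor $W\to X$ through some $W_\lambda$ of finite type over $S$, and since $T\times_X W=(T\times_X W_\lambda)\times_{W_\lambda} W$ one is reduced to $W$ of finite type over $S$. Then $T\times_X W$ is locally of finite type over $S$ and the projection to $T$ is locally of finite type, so it suffices to check smoothness at closed points of the fibre over $t$, whose residue fields are automatically \emph{finite} over $\kappa(t)$; there the infinitesimal criterion applies directly via \cite[\textbf{IV}.17.14.2]{EGA}. Your ``In particular'' argument inherits the same problem: without first reducing to $W$ of finite type, $T_W\to T$ need not be closed, so the ``specialization-lifting'' step is unjustified. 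Once $W$ is of finite type the conclusion is immediate, since any closed point of $T\times_X W$ is then of finite type over $S$ and hence maps to a finite-type point of $T$.
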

\begin{proof}
  By a standard limit argument we can assume that $W\to S$ is of finite type.
  It is then enough to verify that $T\times_X W\to W$ is smooth at closed points
  in the fiber of $t$ and this follows from~\cite[\textbf{IV}.17.14.2]{EGA}.
  The last
  statement follows from the fact that any closed point of $T\times_X W$ maps
  to a point of finite type of $T$.
\end{proof}
There is a tight connection between formal smoothness
(resp.\ formal 
versality) and $X$-extensions in the \emph{affine} setting. Most of
the next result was proved in \cite[Lem.\ 4.3]{hallj_openness_coh},
which utilized arguments similar to those of \cite[Satz~3.2]{MR638811}.
\begin{lem}\label{lem:smooth}
  Let $S$ be a scheme, let $X$ be an $S$-groupoid, and let $T$ be an
  \emph{affine} $X$-scheme. Let $t\in |T|$ be a point. Consider the
  following conditions. 
  \begin{enumerate}
  \item The $X$-scheme $T$ is formally smooth at
    $t$.\label{lem:item:smooth:fs_point}
  \item The $X$-scheme $T$ is formally versal at
    $t$.\label{lem:item:smooth:fv}
  \item $\Exal_X(T,\kappa(t))=0$.\label{lem:item:smooth:Exal_point}
    \end{enumerate}
  Then
  \itemref{lem:item:smooth:fs_point}$\implies$\itemref{lem:item:smooth:fv}
  and if $X$ is $\HArt$-homogeneous and $t$ is of finite type, then
  \itemref{lem:item:smooth:fv}$\implies$\itemref{lem:item:smooth:fs_point}. If
  $X$ is $\HCL$-homogeneous, $T$ is noetherian and $t$ is a \emph{closed} point,
  then
  \itemref{lem:item:smooth:fv}$\implies$\itemref{lem:item:smooth:Exal_point}. If
  $X$ is $\HrCL$-homogeneous and $t$ is a \emph{closed} point, then
  \itemref{lem:item:smooth:Exal_point}$\implies$\itemref{lem:item:smooth:fv}.
\end{lem}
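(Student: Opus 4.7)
The lemma has four implications. Implication (i)$\Rightarrow$(ii) is immediate: a formal versality lifting problem (where $\kappa(t)\cong\kappa(z)$) is a special case of a formal smoothness lifting problem (where $\kappa(t)\subset\kappa(z)$ is finite). The equivalences (ii)$\Leftrightarrow$(iii) follow templates from \cite[Lem.~4.3]{hallj_openness_coh}; the new content is (ii)$\Rightarrow$(i), which is the main obstacle and the reason the hypothesis is upgraded to $\HArt$-homogeneity.

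For (ii)$\Rightarrow$(i) under $\HArt$-homogeneity with $t$ of finite type, given a lifting problem $(Z_0\hookrightarrow Z,\, g\colon Z_0\to T,\, Z\to X)$ with $Z$ local artinian, $g(z)=t$, and $\kappa(t)\hookrightarrow\kappa(z)$ finite, the plan is to ``pinch'' the residue field $\kappa(z)$ down to $\kappa(t)$ and then apply versality. Apply Lemma~\ref{lem:homog_pushouts_int} to the diagram $\spec\kappa(t)\leftarrow\spec\kappa(z)\hookrightarrow Z_0$, and analogously with $Z$ in place of $Z_0$; the left arrow is a morphism of local artinian schemes of finite type over $S$ (hence in $\HArt$, since $t$ is of finite type and $\kappa(z)/\kappa(t)$ is finite), and the right arrow is the nilpotent closed immersion of the unique point. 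This produces $X$-schemes $\widetilde Z_0=\spec(\kappa(t)\times_{\kappa(z)}\Orb_{Z_0})$ and $\widetilde Z$, both local artinian with residue field $\kappa(t)$, together with a square-zero closed immersion $\widetilde Z_0\hookrightarrow\widetilde Z$ (the ideal is unchanged because $Z_0\hookrightarrow Z$ is). Because $T$ has $\kappa(t)$ as its residue field at $t$, the ring map $g^*\colon\Orb_{T,t}\to\Orb_{Z_0}$ has residue-field component $\kappa(t)\hookrightarrow\kappa(z)$ and therefore lands in the subring $\kappa(t)\times_{\kappa(z)}\Orb_{Z_0}\subset\Orb_{Z_0}$, so $g$ factors canonically as $Z_0\to\widetilde Z_0\xrightarrow{\widetilde g}T$. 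Formal versality at $t$ applied to $(\widetilde Z_0\hookrightarrow\widetilde Z,\widetilde g,\widetilde Z\to X)$---which is a legal versality problem since the residue field of $\widetilde Z$ is $\kappa(t)$---yields a lift $\widetilde Z\to T$, and composing with the canonical $X$-morphism $Z\to\widetilde Z$ from the pushout solves the original lifting problem; compatibility with $g$ on $Z_0$ follows from the pushout universal property.

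For (iii)$\Rightarrow$(ii) under $\HrCL$-homogeneity with $t$ closed, a d\'evissage of the square-zero ideal of $Z_0$ in $Z$ by a filtration with subquotients isomorphic to $\kappa(t)$ reduces to the case where this ideal is itself isomorphic to $\kappa(t)$. The morphism $g$ is then $\HrCL$, since the composition $\red{Z_0}=\spec\kappa(t)\hookrightarrow Z_0\xrightarrow{g}T$ is the closed-point inclusion $\{t\}\hookrightarrow T$. Lemma~\ref{lem:homog_pushouts_int} produces the pushout $T'=T\sqcup_{Z_0}Z$ as an $X$-scheme, and the ring-theoretic ideal formula identifies $T\hookrightarrow T'$ as a square-zero $X$-extension by $\kappa(t)$; the hypothesis $\Exal_X(T,\kappa(t))=0$ supplies an $X$-retraction $r\colon T'\to T$, and composing $Z\to T'\xrightarrow{r}T$ yields the desired lift. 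For the converse (ii)$\Rightarrow$(iii) under $\HCL$-homogeneity with $T=\spec A$ noetherian, given an $X$-extension $T\hookrightarrow T'=\spec A'$ by $\kappa(t)$, the $\mathfrak{m}_t$-adic truncations $\spec A/\mathfrak{m}_t^n\hookrightarrow\spec A'/\mathfrak{m}_t^n A'$ are square-zero $X$-extensions by $\kappa(t)$ between local artinian schemes with residue field $\kappa(t)$ (using $\mathfrak{m}_t\cdot\kappa(t)=0$ to identify the kernel); inductive application of formal versality produces compatible partial retractions that assemble, via the noetherian hypothesis on $A$, into the required $X$-retraction, following the argument of \cite[Lem.~4.3]{hallj_openness_coh}.
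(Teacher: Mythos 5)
Your treatment of the two implications that the paper actually argues---\itemref{lem:item:smooth:fs_point}$\implies$\itemref{lem:item:smooth:fv}, and \itemref{lem:item:smooth:fv}$\implies$\itemref{lem:item:smooth:fs_point} under $\HArt$-homogeneity---is correct and is essentially the paper's argument in a slightly different guise. Where the paper replaces $Z_0$ by the scheme-theoretic image $W_0$ of $Z_0\to\spec(\Orb_{T,t})$ and then pushes out $Z$ along $Z_0\to W_0$, you push out $Z_0$ and $Z$ along the single $\HArt$-morphism $\spec\kappa(z)\to\spec\kappa(t)$, obtaining the subrings $\kappa(t)\times_{\kappa(z)}\Orb_{Z_0}\subset\Orb_{Z_0}$ and $\kappa(t)\times_{\kappa(z)}\Orb_{Z}\subset\Orb_{Z}$. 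Both devices reduce the residue field of the test object to $\kappa(t)$ so that versality applies, and your verifications (that $g$ factors through $\widetilde Z_0$, that the ideal is unchanged, and that the pushout's universal property returns the lift of $Z$) are sound. Your sketch of \itemref{lem:item:smooth:Exal_point}$\implies$\itemref{lem:item:smooth:fv} is also fine.

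The remaining implication \itemref{lem:item:smooth:fv}$\implies$\itemref{lem:item:smooth:Exal_point} is only cited in the paper, and your sketch of it contains a genuine error. The kernel of $A'/\mathfrak{m}_t^nA'\to A/\mathfrak{m}_t^n$ is $I/(I\cap\mathfrak{m}_t^nA')$, and $\mathfrak{m}_t\cdot I=0$ does \emph{not} force $I\cap\mathfrak{m}_t^nA'=0$: take $A=k[x]/(x^2)$, $A'=k[x]/(x^3)$, $I=(x^2)$; then $I\subseteq(\mathfrak{m}')^2$, so the $n=2$ truncation is an extension by $0$, not by $\kappa(t)$. One needs the Artin--Rees lemma to get $I\cap(\mathfrak{m}')^n=0$ for $n\gg0$---this, rather than the ``assembly,'' is where the noetherian hypothesis enters. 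Moreover, a lift over the truncated extension does not by itself yield a retraction of $T\hookrightarrow T'$: the point of the argument is that for such $n$ one has the conductor square $A'\cong A\times_{A/\mathfrak{m}_t^n}A'/(\mathfrak{m}')^n$, so that $T'$ is the pushout of $[T\hookleftarrow\spec(A/\mathfrak{m}_t^n)\hookrightarrow\spec(A'/(\mathfrak{m}')^n)]$, and $\HCL$-homogeneity---the hypothesis of this implication, which you never invoke---identifies this as a pushout of $X$-schemes; only then does the versal lift of the artinian piece glue with $\ID{T}$ to the required $X$-retraction $T'\to T$.
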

Thus, assuming that an $S$-groupoid $X$ is $\HrCL$-homogeneous, we
can reformulate formal versality of an affine $X$-scheme $T$ at a
closed point $t\in |T|$ in terms of the triviality of the abelian
group $\Exal_X(T,\kappa(t))$. Understanding the set of points $U
\subset |T|$ where $\Exal_X(T,\kappa(u)) = 0$ for $u\in |U|$ will be
accomplished in the next section.
\begin{rem}
If $X$ is $\HA$-homogeneous and $\Exal_X(T,-)\equiv 0$, then $T$ is formally
smooth~\cite[Lem.\ 4.3]{hallj_openness_coh} but we will not use this. If
$\Exal_X$ commutes with Zariski localization, that is, if for any open
immersion of affine schemes $U\subseteq T$ the canonical map
$\Exal_X(T,M)\tensor_{\Gamma(\Orb_T)} \Gamma(\Orb_U)\to\Exal_X(U,M|_U)$
is bijective, then the
implications
\itemref{lem:item:smooth:fv}$\implies$\itemref{lem:item:smooth:Exal_point} and
\itemref{lem:item:smooth:Exal_point}$\implies$\itemref{lem:item:smooth:fv} also
hold for non-closed points. This is essentially what Flenner
proves in~\cite[Satz~3.2]{MR638811} as his $\mathscr{E}x(T\to X,M)$ is the
sheafification of the presheaf $U\mapsto \Exal_X(U,M|_U)$.
\end{rem}
\begin{proof}[Proof of Lemma~\ref{lem:smooth}]
  The implication
  \itemref{lem:item:smooth:fs_point}$\implies$\itemref{lem:item:smooth:fv}
  follows from the definition. The implications
  \itemref{lem:item:smooth:fv}$\implies$\itemref{lem:item:smooth:Exal_point}
  and
  \itemref{lem:item:smooth:Exal_point}$\implies$\itemref{lem:item:smooth:fv}
  are proved in~\cite[Lem.\ 4.3]{hallj_openness_coh}. The implication
  \itemref{lem:item:smooth:fv}$\implies$\itemref{lem:item:smooth:fs_point}
  follows from a similar argument: assume that $T$ is formally versal at $t$
  and let $Z_0\hookrightarrow Z$ be a square zero closed immersion of local artinian
  $X$-schemes fitting into a commutative diagram
  \[
  \xymatrix@R-0.8pc{Z_0 \ar@{(->}[d]\ar[r] & T\ar[d] \\
    Z\ar[r] & X,}
  \]
  such that the closed point $z\in |Z_0|$ is mapped to $t\in |T|$ and
  $\kappa(z)/\kappa(t)$ is a finite extension. Let $W_0$ be the image of
  $Z_0\to \spec(\Orb_{T,t})$. Then $W_0$ is a local artinian scheme with
  residue field $\kappa(t)$. As $X$ is $\HArt$-homogeneous, there is a
  commutative
  diagram
  \[
  \xymatrix@R-0.8pc{Z_0 \ar@{(->}[d]\ar[r] & W_0 \ar@{(->}[d]\ar[r] & T\ar[d] \\
    Z\ar[r] & W\ar[r] & X,}
  \]
  where $W_0\hookrightarrow W$ is a square zero closed immersion. As
  $W_0\hookrightarrow W$ is a sequence of closed immersions with kernel
  isomorphic to $\kappa(t)$, there is a lift $W\to T$ and thus a lift $Z\to T$.
\end{proof}
Combining the two lemmas above we
obtain an analogue of \cite[Prop.\ 4.2]{MR0399094}.
\begin{prop}\label{prop:artin_prop42}
  Let $S$ be a locally noetherian scheme and let $X$ be a limit
  preserving and $\HArt$-homogeneous $S$-groupoid. Let $T$ be an $X$-scheme such that
  \begin{enumerate}
    \item $T\to S$ is locally of finite type,
    \item $T\to X$ is formally versal at all points of finite type, and
    \item $T\to X$ is representable by algebraic spaces.
  \end{enumerate}
  Then $T\to X$ is formally smooth.
\end{prop}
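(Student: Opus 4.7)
The plan is to chain Lemma~\ref{lem:fsmooth_pt+repr} and Lemma~\ref{lem:smooth}, with a small Zariski reduction in between. Lemma~\ref{lem:fsmooth_pt+repr} reduces the global statement ``$T\to X$ is formally smooth'' to the pointwise statement ``$T\to X$ is formally smooth at every $t\in|T|$ of finite type,'' using precisely the representability of $T\to X$, the limit preservation of $X$, and the assumption that $T\to S$ is locally of finite type. The proposition therefore reduces to verifying formal smoothness at each finite-type point, which is where Lemma~\ref{lem:smooth} enters: its implication \itemref{lem:item:smooth:fv}$\Rightarrow$\itemref{lem:item:smooth:fs_point} upgrades formal versality at a finite-type point to formal smoothness at that point, provided the ambient $X$-scheme is \emph{affine} and the $S$-groupoid is $\HArt$-homogeneous---both available to us, the latter by hypothesis.

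To deploy Lemma~\ref{lem:smooth}, I need to reduce to an affine neighbourhood. Fix a finite-type point $t\in|T|$ and choose an affine open $U\subseteq T$ containing $t$. Then $U\to S$ is locally of finite type and $U\to X$, being the composition of the open immersion $U\hookrightarrow T$ with $T\to X$, is still representable by algebraic spaces. Moreover, the assumed formal versality of $T\to X$ at $t$ immediately restricts to formal versality of $U\to X$ at $t$: any defining lifting problem involves a local artinian $X$-scheme $Z$ with closed point $z$ mapping to $t\in U$, so both $Z_0\to T$ and any lift $Z\to T$ have set-theoretic image contained in $\{t\}\subseteq U$ and therefore factor through $U$. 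Lemma~\ref{lem:smooth} applied to the affine $X$-scheme $U$ at $t$ then gives formal smoothness of $U\to X$ at $t$, which is the same as formal smoothness of $T\to X$ at $t$ since the notion only sees a neighbourhood of $t$.

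Since $t$ was arbitrary, Lemma~\ref{lem:fsmooth_pt+repr} concludes. There is essentially no obstacle in this proposition itself; the real work has been absorbed into the two preceding lemmas. The $\HArt$-homogeneity hypothesis is the non-trivial input---in applications it will be supplied by $\HArtTriv$-homogeneity together with the fppf (or \'etale plus $\HArtINSEP$) stack condition via Lemma~\ref{lem:HArt}---and Lemma~\ref{lem:smooth} uses it precisely to construct the intermediate artinian extension $W_0\hookrightarrow W$ with residue field $\kappa(t)$ that bridges a lifting problem with an arbitrary finite residue field extension and the trivial-residue-field lifting problem controlled by formal versality.
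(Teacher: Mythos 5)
Your proof is correct and is exactly the argument the paper intends: the paper states the proposition immediately after Lemmas~\ref{lem:fsmooth_pt+repr} and~\ref{lem:smooth} with the remark that it follows by ``combining the two lemmas above,'' which is precisely your chain of (formal versality at $t$) $\Rightarrow$ (formal smoothness at $t$) $\Rightarrow$ (formal smoothness). Your explicit reduction to an affine open neighbourhood, using that a local artinian scheme has one-point image and hence factors through any open containing that point, correctly supplies the detail the paper leaves implicit.
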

We also obtain the following result showing that formal versality is
\'etale-local under mild hypotheses. This improves \cite[Prop.\ 4.3]{MR0399094},
which requires the existence of an obstruction theory that is compatible with
\'etale localization.
\begin{prop}\label{prop:artin_prop43}
  Let $S$ be a scheme and let $X$ be an $\HArtTriv$-homogeneous $S$-groupoid that is a
  stack in the \'etale topology. Let $T$ be an $X$-scheme and let
  $(U,u) \to (T,t)$ be a pointed
  \'etale morphism of $S$-schemes. Then formal versality at $t\in
  |T|$ implies formal versality at $u\in |U|$.
\end{prop}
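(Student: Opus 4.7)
Given a square-zero closed immersion $Z_0\hookrightarrow Z$ of local artinian $X$-schemes (with closed point $z$) and a commutative square as in the definition of formal versality at $u$, with $g\colon Z_0\to U$, $h\colon Z\to X$, $g(z)=u$, and $\kappa(z)\cong\kappa(u)$, we seek a lift $Z\to U$. Note that $\kappa(u)/\kappa(t)$ is finite separable because $f\colon U\to T$ is \'etale.

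The plan is as follows. Since $f$ is formally \'etale, producing the required $Z\to U$ is equivalent to producing a morphism of $X$-schemes $\tilde h\colon Z\to T$ that extends $f\circ g\colon Z_0\to T$ and is compatible with $h$. We imitate the proof of \itemref{lem:item:smooth:fv}$\Rightarrow$\itemref{lem:item:smooth:fs_point} in Lemma~\ref{lem:smooth}: let $W_0$ be the scheme-theoretic image of $Z_0\to\spec(\Orb_{T,t})$, a local artinian $X$-scheme with residue field $\kappa(t)$ such that $Z_0\to W_0$ is finite, and form the scheme-theoretic pushout $W=W_0\sqcup_{Z_0}Z$. This $W$ is local artinian with residue field $\kappa(t)$, and $W_0\hookrightarrow W$ is a square-zero closed immersion. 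Once $W$ is made into an $X$-scheme compatibly with $W_0\to T\to X$ and $Z\to X$, formal versality of $T$ at $t$ supplies a morphism $W\to T$, and $\tilde h\colon Z\to W\to T$ is the desired map.

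The main obstacle is to equip $W$ with this $X$-scheme structure. Under full $\HArt$-homogeneity it would follow from Lemma~\ref{lem:homog_pushouts_int}; we only have $\HArtTriv$-homogeneity, and we bridge the gap exactly as in case~\itemref{lem:HArt:et+insephom} of Lemma~\ref{lem:HArt}, using the separability of $\kappa(u)/\kappa(t)$ in place of pure inseparability. Choose a finite Galois extension $K/\kappa(t)$ containing $\kappa(u)$, lift it to a finite \'etale Galois cover $\tilde W_0\to W_0$ with residue field $K$ and group $G=\mathrm{Gal}(K/\kappa(t))$, and let $\tilde Z\to Z$ be the unique finite \'etale extension restricting to $\tilde Z_0:=Z_0\times_{W_0}\tilde W_0$ over $Z_0$. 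In the resulting diagram $[\tilde W_0\leftarrow\tilde Z_0\hookrightarrow\tilde Z]$ all residue fields (componentwise) equal $K$, so $\HArtTriv$-homogeneity and Lemma~\ref{lem:homog_pushouts_int} equip the pushout $\tilde W=\tilde W_0\sqcup_{\tilde Z_0}\tilde Z$ with a canonical $X$-scheme structure. The morphism $\tilde W\to W$ is finite \'etale Galois (pushouts commute with flat base change), and the $G$-action on $\tilde W$ preserves the $X$-scheme structure since the input data is pulled back from the $G$-invariant objects $W_0$ and $Z$. Since $X$ is a stack in the \'etale topology, this $G$-equivariant structure descends along $\tilde W\to W$ to the required $X$-scheme structure on $W$, completing the proof.
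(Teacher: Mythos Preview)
Your proof is correct and follows essentially the same approach as the paper. The paper's argument is more condensed---it first observes, by the method of Lemma~\ref{lem:HArt} (\'etale descent in place of fppf descent), that an \'etale stack with $\HArtTriv$-homogeneity is homogeneous for artinian diagrams with \emph{separable} residue field extensions, and then invokes the argument of Lemma~\ref{lem:smooth}\itemref{lem:item:smooth:fv}$\Rightarrow$\itemref{lem:item:smooth:fs_point}; your version simply unpacks both of these references in the specific situation, carrying out the Galois descent explicitly rather than stating the separable-homogeneity as an intermediate result.
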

\begin{proof}
Reasoning as in the proof of Lemma~\ref{lem:HArt}, we see that $X$ is
homogeneous with respect to morphisms of artinian rings with separable residue
field extensions. Arguing as in the proof of
Lemma~\ref{lem:smooth}\itemref{lem:item:smooth:fv}$\implies$\itemref{lem:item:smooth:fs_point}
we thus see that formal versality at $t\in |T|$ implies
formal versality at $u\in |U|$.
\end{proof}
Using Lemma~\ref{lem:smooth}, one can show that
Proposition~\ref{prop:artin_prop43} admits a partial converse. Indeed, if $u\in
|U|$ and $t\in |T|$ are closed, $X$ is $\HrCL$-homogeneous, $U$ and $T$ are
affine and noetherian, and $T\to X$ is representable by algebraic
spaces, then formal versality at $u\in |U|$ implies formal versality at
$t\in |T|$. This will not be used, however. 
\begin{rem}
Artin remarks~\cite[4.9]{MR0399094} that to verify the criteria for
algebraicity, it is enough to find suitable obstruction theories
\'etale-locally. We do not, however, understand the given arguments 
as \cite[Prop.\ 4.3]{MR0399094} uses the existence of a global obstruction
theory. Since
our Proposition~\ref{prop:artin_prop43} does not use obstruction theories,
it is enough to find obstruction theories \'etale-locally on $T$ in
the Main Theorem.
If one replaces semihomogeneity by
homogeneity we can thus confirm \cite[4.9]{MR0399094}.
\end{rem}
Next, we give a condition that ensures that if an $X$-scheme $T$
is formally versal at all \emph{closed} points, then it is formally versal
at all points of \emph{finite type}.
\begin{boxcnd}[Zariski localization of
  extensions] \label{cnd:Zar_loc_exal}  
  For any open immersion of {affine} $X$-schemes $p \colon U  \to
  T$, locally of finite type over $S$, and any point $u\in |U|$ of finite type,
  the natural map:
  \[
  \Exal_X(T,\kappa(u)) \to \Exal_X(U,\kappa(u))
  \]
  is surjective.
\end{boxcnd}
Note that Lemma
\ref{lem:der_exal_props_record}\itemref{lem:der_exal_props_record:item:et}
implies that Condition \ref{cnd:Zar_loc_exal} is satisfied whenever the
$S$-groupoid $X$ is $\HA$-homogeneous. It is also satisfied whenever
$S$ is Jacobson.
\begin{lem}\label{lem:Jacobson_implies_Zar_loc_exal}
Let $X$ be a Zariski $S$-stack and let $p\colon U\to T$ be an open immersion
of affine $X$-schemes. If $u\in |U|$ is a point that is closed in $T$,
then the natural map
  \[
  \Exal_X(T,\kappa(u)) \to \Exal_X(U,\kappa(u))
  \]
is an isomorphism. In particular, if $X$ is a Zariski stack and $S$ is
Jacobson, then Condition~\ref{cnd:Zar_loc_exal} is always satisfied.
\end{lem}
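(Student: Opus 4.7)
The plan is Zariski descent along the cover $\{U,V\}$ of $T$, where $V := T \setminus \{u\}$. Since $u$ is closed in $T$, $V$ is open. Write $W := U \cap V = U \setminus \{u\}$. Viewed as a quasi-coherent $\mathcal{O}_T$-module (the skyscraper at the closed point $u$), the sheaf $\kappa(u)$ restricts to zero on both $V$ and $W$, while its restriction to $U$ is still $\kappa(u)$.

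The key preliminary observation is that for any $X$-scheme $Y$ the category $\EXAL_X(Y,0)$ is equivalent to the terminal groupoid: a square-zero closed immersion with zero ideal is necessarily an isomorphism, and the automorphisms of such a trivial extension form $\Der_X(Y,0)=0$. Consequently, any $X$-extension of $V$ or of $W$ by the zero sheaf is canonically the trivial extension, with unique identity automorphism.

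For surjectivity, I would take $(U \hookrightarrow U') \in \EXAL_X(U,\kappa(u))$ and observe that its restriction to $W$ is canonically the trivial extension. Since $U \hookrightarrow U'$ is a homeomorphism, one may glue $U'$ and $V$ as $S$-schemes along the common open subscheme $W$ to obtain a scheme $T'$ with $|T'|=|T|$ together with a square-zero closed immersion $T \hookrightarrow T'$; its ideal sheaf agrees with $\kappa(u)$ on $U$ and with $0$ on $V$, hence equals $\kappa(u)$ on $T$. The Zariski stack property of $X$ then glues the two morphisms $U' \to X$ and $V \hookrightarrow T \to X$, which agree on $W$ via the canonical trivializations, into a morphism $T' \to X$, producing the desired $X$-extension of $T$ whose restriction to $U$ is $U'$. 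The main technical point is the verification that the glued ideal really is $\kappa(u)$, which is a formal consequence of the sheaf-theoretic gluing. Injectivity is handled symmetrically: given two $X$-extensions $T_1',T_2'$ of $T$ whose restrictions to $U$ are related by an isomorphism $\alpha$, the canonical identity isomorphism between $T_1'|_V$ and $T_2'|_V$ agrees with $\alpha|_W$ on the overlap (both being the unique identity automorphism of the trivial extension of $W$), and Zariski descent furnishes an isomorphism $T_1' \cong T_2'$.

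For the \emph{in particular} statement, assume $S$ is Jacobson and $T$ is locally of finite type over $S$; then $T$ is also Jacobson, so every finite-type point of $T$ is closed. Since $U \to T$ is locally of finite type, any finite-type point $u \in |U|$ is a finite-type point of $T$, and is therefore closed in $T$. The bijection just established then yields the surjection required by Condition~\ref{cnd:Zar_loc_exal}.
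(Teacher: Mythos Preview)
Your proof is correct and follows essentially the same approach as the paper: construct an inverse to restriction by gluing $U'$ with $V = T \setminus \{u\}$ along $W = U \setminus \{u\}$, using that $X$ is a Zariski stack. The paper states this in a single sentence, while you have spelled out the surjectivity and injectivity arguments and the Jacobson deduction in more detail.
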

\begin{proof}
We construct an inverse by taking an $X$-extension $U\hookrightarrow U'$
of $U$ by $\kappa(u)$ to the gluing of $U'$ and $T\setminus \kappa(u)$ along
$U'\setminus \kappa(u)\cong U\setminus\kappa(u)$. If $S$ is Jacobson and
$T\to S$ is locally of finite type, then $T$ is Jacobson and every point of
finite type $u\in |U|$ is closed in $T$ so Condition~\ref{cnd:Zar_loc_exal} holds.
\end{proof}
We now extend the implication
\itemref{lem:item:smooth:Exal_point}$\implies$\itemref{lem:item:smooth:fv}
of Lemma~\ref{lem:smooth} to points of finite type.
\begin{prop}\label{prop:closed-pts-vs-finite-type-pts}
  Fix a scheme $S$ and an $\HrCL$-homogeneous $S$-groupoid $X$ satisfying
  Condition~\ref{cnd:Zar_loc_exal} (Zariski localization of extensions). Let
  $T$ be an affine $X$-scheme, locally of finite type over $S$, and let
  $t\in |T|$
  be a point of \emph{finite type}. If $\Exal_X(T,\kappa(t))=0$ then the
  $X$-scheme $T$ is formally versal at $t$.
\end{prop}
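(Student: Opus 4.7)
The plan is to reduce to the closed-point case of Lemma~\ref{lem:smooth} by shrinking $T$ to an affine open neighborhood $U$ of $t$ in which $t$ is \emph{closed}. Since $t$ is a point of finite type, it is locally closed in $T$ (as recorded in the Notation subsection), so some open containing $t$ has $\{t\}$ as a closed subset; because $T$ is affine I may then choose a distinguished affine open $U \subseteq T$ with $t\in U$ and $t$ closed in $U$. The $X$-scheme structure on $T$ restricts to one on $U$, and $U$ is affine and locally of finite type over $S$.

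Next, I would transfer the vanishing hypothesis from $T$ to $U$. By Condition~\ref{cnd:Zar_loc_exal}, the restriction map
\[
\Exal_X(T,\kappa(t))\longrightarrow \Exal_X(U,\kappa(t))
\]
is surjective, so $\Exal_X(U,\kappa(t))=0$. Since $t$ is now a closed point of the affine $X$-scheme $U$ and $X$ is $\HrCL$-homogeneous, the implication \itemref{lem:item:smooth:Exal_point}$\Rightarrow$\itemref{lem:item:smooth:fv} of Lemma~\ref{lem:smooth} applies and yields formal versality of $U$ at $t$.

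Finally, I would propagate formal versality back from $U$ to $T$. Given a lifting problem
\[
\xymatrix@R-0.8pc{Z_0\ar@{(x->}[d]\ar[r]^{g} & T\ar[d]\\ Z\ar[r] & X}
\]
with $Z$ local artinian, closed point $z$, $g(z)=t$, and $\kappa(z)\cong\kappa(t)$, the condition $g(z)=t\in U$ together with the locality of $Z_0$ and $Z$ and the openness of $U\hookrightarrow T$ forces both horizontal arrows to factor through $U$. Formal versality of $U$ at $t$ then supplies a lift $Z\to U$, which composed with $U\hookrightarrow T$ solves the original lifting problem. I expect no real obstacle: the only point requiring care is the simultaneous choice of $U$ as affine, containing $t$ as a closed point, and small enough for the factorizations to hold; the rest is bookkeeping built on the two prior inputs (Condition~\ref{cnd:Zar_loc_exal} and Lemma~\ref{lem:smooth}).
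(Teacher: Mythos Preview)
Your proposal is correct and follows essentially the same route as the paper's proof: shrink to an affine open $U\subseteq T$ in which $t$ is closed, use Condition~\ref{cnd:Zar_loc_exal} to transport the vanishing of $\Exal_X$ to $U$, apply Lemma~\ref{lem:smooth} at the closed point, and then observe that formal versality at $t$ passes from $U$ back to $T$ directly from the definition. The paper compresses your final step to a single clause (``It then follows, from the definition, \dots''), but your explicit unwinding is fine; note only that in your diagram just the top arrow $Z_0\to T$ needs to factor through $U$, since the bottom arrow already lands in $X$.
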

\begin{proof}
  Finite type points are locally closed so there exists an open affine
  neighborhood $U\subseteq T$ of $t$ such that $t\in |U|$ is closed. By
  Condition~\ref{cnd:Zar_loc_exal} we have that
  $\Exal_X(U,\kappa(t))=\Exal_X(T,\kappa(t))=0$ so the $X$-scheme $U$ is
  formally versal at $t$ by Lemma~\ref{lem:smooth}. It then follows, from the
  definition, that the $X$-scheme $T$ also is formally versal at $t$.
\end{proof}
We conclude this section by showing that $\HDVR$-homogeneity implies that
formal smoothness is stable under generizations.
Recall that a \emph{geometric discrete valuation ring} is a discrete valuation
ring $D$ such that $\spec(D)\to S$ is essentially of finite type and the
residue field is of finite type over $S$~\cite[p.~38]{MR0260746}.
\begin{defn}\label{defn:DVR-homogeneity}
Let $S$ be an excellent scheme. We say that an $S$-groupoid $X$ is
$\HDVR$-homogeneous if for any diagram of affine $S$-schemes $[\spec D
  \leftarrow \spec K \xrightarrow{{i}} \spec K']$, where $D$ is a geometric
discrete valuation ring with fraction field $K$ and ${i}$ is a nilpotent
closed immersion, the natural functor:
\[
\FIB{X}{\spec (K'\times_K D)} \to \FIB{X}{\spec K'}
\times_{\FIB{X}{\spec K}} \FIB{X}{\spec D}
\]
is an equivalence of categories. 
\end{defn}
Artin's condition [4a] of~\cite[Thm.~3.7]{MR0260746} implies
$\HDVR$-semihomogeneity and Artin's conditions [$5'$](b) and [$4'$](a,b)
of~\cite[Thm.~5.3]{MR0260746} imply $\HDVR$-homogeneity.
The following lemma is a
generalization of~\cite[Lem.~3.10]{MR0260746} from functors to categories
fibered in groupoids.
\begin{lem}\label{lem:DVR-hom_generizing}
  Let $S$ be an excellent scheme and let $X$ be a limit preserving
  $\HDVR$-homogeneous $S$-groupoid. Let $T$ be an $X$-scheme such that
  \begin{enumerate}
    \item $T\to S$ is locally of finite type,
    \item $T\to X$ is representable by algebraic spaces, and
    \item $T\to X$ is formally smooth at a point $t\in |T|$ of finite type.
  \end{enumerate}
  Then $T\to X$ is formally smooth at every generization $t'\in |T|$ of $t$.
\end{lem}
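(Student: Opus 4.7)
The plan is to verify formal smoothness of $T \to X$ at $t'$ directly. Given a test diagram: a square zero closed immersion $Z_0 \hookrightarrow Z$ of $X$-schemes with $Z$ local artinian, closed point $z$ mapping to $t'$, $K := \kappa(z)$ finite over $\kappa(t')$, and square zero ideal $I \subset R' := \Gamma(Z, \Orb)$ with quotient $R := R'/I = \Gamma(Z_0, \Orb)$, together with a given morphism $Z_0 \to T$ over $X$, we must extend to $Z \to T$. The strategy is to bridge $t'$ and $t$ using a geometric DVR and thereby transport the lifting problem into a setting solvable by formal smoothness at $t$.

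First, I would construct a geometric DVR $D$ with fraction field $K$, together with a morphism $\spec D \to T$ whose generic point maps to $t'$ and whose closed point maps to $t$ (with finite residue extension $k_D/\kappa(t)$). Such a $D$ exists because $S$ is excellent, $T\to S$ is locally of finite type, and $t$ is of finite type: $\Orb_{T,t}$ is excellent, its quotient by the prime corresponding to $t'$ is an excellent local domain with fraction field $\kappa(t')$, and this domain may be dominated by a geometric DVR with fraction field any prescribed finite extension of $\kappa(t')$, such as $K$. Both compositions $\spec K \to T$ --- via $\spec K \hookrightarrow Z_0$ and via the generic point inclusion $\spec K \to \spec D$ --- factor through $\kappa(t') \hookrightarrow K$, so they agree as morphisms.

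Next I would apply $\HDVR$-homogeneity. The scheme $T$ is trivially $\HDVR$-homogeneous, so the compatible pair $(Z_0 \to T, \spec D \to T)$ combines into a morphism $Y_0 := \spec(R \times_K D) \to T$; the hypothesized $\HDVR$-homogeneity of $X$ combines $(Z \to X, \spec D \to T \to X)$ into $Y := \spec(R' \times_K D) \to X$. The surjection $R' \times_K D \twoheadrightarrow R \times_K D$ with kernel $I$ realizes $Y_0 \hookrightarrow Y$ as a square zero closed immersion; a further application of $\HDVR$-homogeneity of $X$ (to $Y_0$ itself) identifies the two maps $Y_0 \to T \to X$ and $Y_0 \hookrightarrow Y \to X$, since both restrict to the same pair on $Z_0$ and $\spec D$.

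Finally, Lemma~\ref{lem:fsmooth_pt+repr} applied to $W = Y$ shows that $T \times_X Y \to Y$ is smooth in an open $U$ containing every point whose image in $T$ is $t$. The section $\sigma \colon Y_0 \to T \times_X Y$ determined by $(Y_0 \to T,\ Y_0 \hookrightarrow Y)$ sends the closed point of $Y_0$ into $U$; since $Y_0$ is a local scheme, its only open containing the closed point is $Y_0$ itself, so $\sigma^{-1}(U) = Y_0$ and $\sigma(Y_0) \subseteq U$. Smoothness of $U \to Y$ and affineness of $Y$ then extend $\sigma$ across the square zero thickening $Y_0 \hookrightarrow Y$ to a section $\tilde\sigma \colon Y \to U$, which projects to a morphism $Y \to T$ lifting $Y_0 \to T$ compatibly with $Y \to X$. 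Composing with the map $Z \to Y$ induced by the ring inclusion $R' \times_K D \hookrightarrow R'$ produces the required $Z \to T$; compatibility with $Z_0 \to T$ follows from the fact that the two factorizations $R' \times_K D \hookrightarrow R' \twoheadrightarrow R$ and $R' \times_K D \twoheadrightarrow R \times_K D \hookrightarrow R$ of the natural ring map $R' \times_K D \to R$ coincide. The main technical point is the construction of the geometric DVR $D$ with prescribed generic and closed points in $T$; the containment $\sigma(Y_0) \subseteq U$ is topologically immediate, and the extension of sections of smooth morphisms across square zero thickenings over affine bases is routine.
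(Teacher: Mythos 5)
There is a genuine gap at the step you yourself flag as the main technical point: the geometric DVR $D$ that you need does not exist in general. You require $D$ to have fraction field $K$ (a finite extension of $\kappa(t')$), to dominate $A=\Orb_{T,t}/\mathfrak{p}$ where $\mathfrak{p}$ is the prime of $t'$, and to be \emph{geometric}, i.e., essentially of finite type over $S$ with residue field of finite type over $S$. When $\dim A\geq 2$ this is impossible. Take $S=\spec k$, $T=\Aff^2_k$, $t$ the origin and $t'$ the generic point, so $K$ is a finite extension of $k(x,y)$. A DVR essentially of finite type over $k$ with fraction field $K$ is the localization of a two-dimensional finitely generated $k$-domain at a height-one prime, so by the dimension formula its residue field has transcendence degree $1$ over $k$; such a field is not of finite type over $\spec k$ (a field that is a finitely generated $k$-algebra is finite over $k$). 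The valuations of $k(x,y)$ centered at the origin with residue field algebraic over $k$ (e.g., those cut out by transcendental arcs) are, conversely, never essentially of finite type over $k$. Hence no geometric DVR with fraction field $K$ exists at all, and $\HDVR$-homogeneity cannot be invoked directly at $t'$.

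The paper's proof is designed around exactly this obstruction: it never applies $\HDVR$-homogeneity at $t'$ itself. Using limit preservation it factors $Z\to X$ through a finite type $S$-scheme $W$, and then combines openness of the smooth locus of $T\times_X W\to W$ (via Lemma~\ref{lem:fsmooth_pt+repr}) with Chevalley's theorem to produce an intermediate generization $w$ of $t$ whose closure in $\spec(\Orb_{T,t})$ has dimension $1$; it suffices to prove formal smoothness at that $w$, since the (open) smooth locus then meets, hence contains, the generic point of the relevant irreducible subset. For such a $w$ the one-dimensional excellent local domain $\Orb_{\overline{\{w\}},t}$ \emph{is} dominated by a geometric DVR with any prescribed finite extension of $\kappa(w)$ as fraction field (normalize and localize; the residue field is finite over $\kappa(t)$), and from there the argument is precisely the pushout-and-lift argument of your second and third paragraphs. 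So those paragraphs are sound, but without the preliminary reduction your proof only establishes the lemma for generizations $t'$ with $\dim\bigl(\overline{\{t'\}}\cap\spec(\Orb_{T,t})\bigr)\leq 1$.
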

\begin{proof}
Consider a diagram of $X$-schemes
\[
\xymatrix{ Z_0 \ar@{(x->}[d] \ar[r]^g  & T \ar[d]  \\ Z
  \ar[r] \ar@{-->}[ur]   & X}
\]
where $Z_0\hookrightarrow Z$ is a closed immersion of local
artinian schemes and the image $t'=g(z_0)$ of the closed point $z_0\in |Z_0|$ is a
generization of $t\in T$ and $\kappa(z_0)/\kappa(t')$ is finite. We have to prove
that every such diagram admits a lifting as indicated by the dashed arrow.

As $X$ is limit preserving, we can factor $Z\to X$ as $Z\to W\to X$ where $W$
is an $S$-scheme of finite type. Let $h\colon T\times_X W\to T$ denote the
first projection. The pull-back $T\times_X W\to W$ is smooth at every point of
the fiber $h^{-1}(t)$ by Lemma~\ref{lem:fsmooth_pt+repr}. Let $T_t$ denote the
local scheme $\spec(\Orb_{T,t})$. It is enough to prove that $T\times_X W\to W$
is smooth at every point of $h^{-1}(T_t)$.

Let $y\in |T\times_X W|$ be a point of $h^{-1}(T_t)$. It is enough to prove
that $Y=\overline{\{y\}}$ contains a point at which $T\times_X W\to W$ is
smooth. By Chevalley's theorem, $h(Y)$ contains a constructible subset. Thus, there is a point
$w\in h(Y)\cap T_t$ such that the closure $W=\overline{\{w\}}$ in the local
scheme $T_t$ is of dimension $1$. By Lemma~\ref{lem:fsmooth_pt+repr}, it is
enough to show that $T\to X$ is formally smooth at $w$. Thus, consider a diagram
\[
\xymatrix{ \spec(K') \ar@{(x->}[d] \ar[r]^-g  & T \ar[d]  \\
\spec(K'') \ar[r] \ar@{-->}[ur]   & X}
\]
of $X$-schemes where $K''\twoheadrightarrow K'$ is a surjection of local
artinian rings such that $g(\eta)=w$ and $\kappa(\eta)/\kappa(w)$ is finite. Let
$D\subseteq K=\kappa(\eta)$ be a geometric DVR dominating $\Orb_{W,t}$ (which
exists since $\Orb_{W,t}$ is excellent). We may
then,
using $\HDVR$-homogeneity, extend the situation to a diagram
\[
\xymatrix{ \spec(K') \ar@{(x->}[d] \ar[r] & \spec(D') \ar@{(x->}[d] \ar[r]
              & T \ar[d]  \\
 \spec(K'') \ar[r] & \spec(D'') \ar[r] \ar@{-->}[ur]   & X}
\]
where $D'=D\times_K K'$ and $D''=D\times_K K''$ so that
$D'\twoheadrightarrow D$ and $D'\twoheadrightarrow D$ have nilpotent kernels.
Now, by Lemma~\ref{lem:fsmooth_pt+repr}, the pullback
$T\times_X \spec(D'')\to \spec(D'')$ is smooth at the image of
$\spec(D')$ so there is a lifting as indicated by the dashed arrow. Thus
$T\to X$ is formally smooth at $w$ and hence also at $t'$.
\end{proof}
In Lemma~\ref{lem:prorep_finhomg_affhomg}, we will show that, under mild
hypotheses, $\HDVR$-homogeneity actually implies $\HA$-homogeneity and thus
also Condition~\ref{cnd:Zar_loc_exal}.
\section{Vanishing loci for additive functors}\label{sec:vl}  
Let $T$ be a scheme. In this section we will be interested in additive
functors $F \colon \QCOH{T} \to \AB$. It is readily seen that the
collection of all such functors forms an abelian category, with
all limits and colimits computed ``pointwise''. For example, given
additive functors $F$, $G \colon \QCOH{T} \to \AB$ as well as a natural
transformation $\varphi \colon F \to G$, then $\ker \varphi \colon \QCOH{T} \to \AB$
is the functor 
\[
(\ker \varphi)(M) = \ker(F(M) \xrightarrow{\varphi(M)} G(M)).
\]
Next, we set $A = \Gamma(\Orb_T)$. Note that the natural action of $A$
on the abelian category $\QCOH{T}$ induces for every $M\in \QCOH{T}$
an action of $A$ on the abelian group $F(M)$. Thus we see that the
functor $F$ is canonically valued in the category $\MOD{A}$.
It will be
convenient to introduce the following notation: for a quasi-compact and
quasi-separated morphism of schemes $g \colon W \to T$ and 
a functor $F \colon \QCOH{T} \to \AB$, define $F_W \colon \QCOH{W} \to \AB$ to be
the functor $F_W(N) = F(g_*N)$. If $F$ is additive (resp.\ preserves
direct limits), then the same is true of $F_W$. The \emph{vanishing locus
of $F$} is the following subset \cite[\S6.2]{hallj_coh_results}:
\begin{align*}
\Van(F) &= \{ t \in |T| \suchthat F(M)=0 \quad \forall M\in \QCOH{T},
      \; \supp(M)\subset \spec(\Orb_{T,t}) \} \\
&= \{ t \in |T| \suchthat F_{\spec(\Orb_{T,t})} \equiv 0 \}
      \quad \text{(if $T$ is quasi-separated).}
\end{align*}
The main result of this section, Theorem~\ref{thm:flenner_vl}, which gives a
criterion for the set $\Van(F)$ to
be Zariski open, is essentially due to H.\ Flenner  
\cite[Lem.\ 4.1]{MR638811}. In \loccit, for an $S$-groupoid $X$ and an
affine $X$-scheme $V$, locally of finite type over $S$, a specific
result about the vanishing locus of the functor  $M \mapsto  
\Exal_X(V,M)$ is proved. In \opcit, 
the standing assumptions are that the $S$-groupoid $X$ is
\emph{semi}-homogeneous, thus the functor $M \mapsto  
\Exal_X(T,M)$ is only set-valued, which complicates matters. Since we
are assuming $\HNIL$-homogeneity of $X$, the functor $M \mapsto
\Exal_X(T,M)$ takes values in abelian groups. As we will see, this 
simplifies matters considerably. 

We now make the following trivial observation. 
\begin{lem}\label{lem:stab_gen}
  Let $T$ be a scheme and let $F \colon \QCOH{T} \to \AB$ be an additive functor.
  Then the subset $\Van(F) \subset |T|$ is stable
  under generization.  
\end{lem}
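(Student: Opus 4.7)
The plan is to chase the definition of $\Van(F)$ directly. Suppose $t \in \Van(F)$ and let $t' \in |T|$ be a generization of $t$, i.e.\ $t \in \overline{\{t'\}}$. I want to show $t' \in \Van(F)$, so let $M \in \QCOH{T}$ with $\supp(M) \subset \spec(\Orb_{T,t'})$ be arbitrary, and try to deduce $F(M)=0$.

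The key observation is purely topological: since $t$ specializes $t'$, the prime $\mathfrak{p}_{t'}$ is contained in $\mathfrak{p}_{t}$ (viewing things locally on an affine neighborhood of $t$), so we have an inclusion of subsets of $|T|$,
\[
\spec(\Orb_{T,t'}) \subset \spec(\Orb_{T,t}).
\]
Concretely, every point whose image in an affine chart corresponds to a prime contained in $\mathfrak{p}_{t'}$ is also a prime contained in $\mathfrak{p}_t$. Hence the support hypothesis on $M$ upgrades to $\supp(M) \subset \spec(\Orb_{T,t})$.

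Now invoke $t \in \Van(F)$ to conclude $F(M)=0$. Since $M$ was arbitrary subject to $\supp(M) \subset \spec(\Orb_{T,t'})$, this exhibits $t' \in \Van(F)$ and completes the proof.

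There is essentially no obstacle: additivity of $F$ is not even used, only the monotonicity of the condition defining $\Van(F)$ in the parameter $t$. (The remark on quasi-separatedness in the alternative description via $F_{\spec(\Orb_{T,t})} \equiv 0$ is irrelevant here, since we work with the first, unconditional, formulation.)
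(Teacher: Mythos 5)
Your argument is correct and is exactly the intended one: the paper states this as a ``trivial observation'' and gives no proof, the point being precisely the monotonicity you identify, namely that a generization $t'$ of $t$ satisfies $\spec(\Orb_{T,t'})\subset\spec(\Orb_{T,t})$ as subsets of $|T|$ (transitivity of generization), so the defining condition for $t'$ is a subcondition of that for $t$. Your remark that additivity of $F$ plays no role here is also accurate.
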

By Lemma \ref{lem:stab_gen}, we thus see that the subset
$\Van(F) \subset |T|$ will be Zariski open if we can
determine sufficient conditions on the functor $F$ and the scheme $T$ so
that the subset $\Van(F)$ is (ind)constructible. We make the
following definitions. 
\begin{defn}\label{defn:gen_cons}
  Let $T=\spec(A)$ be an affine scheme and let $F \colon \QCOH{T} \to
  \AB$ be an additive functor.
  \begin{itemize}
  \item The functor $F$ is \fndefn{bounded} if the scheme $T$ is
    noetherian and $F(M)$ is finitely generated for any finitely generated
    $A$-module $M$.
  \item The functor $F$ is \fndefn{weakly bounded} if the scheme $T$ is
    noetherian and for any integral closed subscheme
    ${i} \colon T_0 \hookrightarrow T$, the $\Gamma(\Orb_{T_0})$-module
    $F({i}_*\Orb_{T_0})$ is coherent.
  \item The functor $F$ is \fndefn{\GI} (resp.\
    \fndefn{\GS}, resp.\ \fndefn{\GB}) if there exists a dense open subset $U
    \subset |T|$ 
    such that for all points $u\in |U|$ of finite type, the map
    \[
    F(\Orb_T) \tensor_A \kappa(u) \to F(\kappa(u))
    \]
    is injective (resp.\ surjective, resp.\ bijective).
  \item The functor $F$ is
    \fndefn{\CI} (resp.\ \fndefn{\CS}, resp.\ \fndefn{\CB}) if for any
    integral closed subscheme $T_0 \hookrightarrow T$, the functor
    $F_{T_0}$ is \GI (resp.\ \GS, resp.\ \GB).
  \end{itemize}
\end{defn}
We can now state the main result of this section.
\begin{thm}[Flenner]\label{thm:flenner_vl}
  Let $T$ be an affine noetherian scheme and let $F \colon \QCOH{T} \to
  \AB$ be a half-exact, additive, and
  bounded functor that commutes with
  direct limits. If the functor $F$ is \CS,
  then the subset $\Van(F) \subset |T|$ is 
  Zariski open.
\end{thm}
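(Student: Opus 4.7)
The plan is to deduce Zariski openness of $\Van(F)$ from two properties: stability under generization (already recorded in Lemma~\ref{lem:stab_gen}) and constructibility, since in a noetherian topological space a generization-stable constructible subset is automatically Zariski open.

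The first step is to use the Ogus--Bergman Nakayama lemma (Theorem~\ref{thm:nakayama}) to obtain the pointwise characterization
\[
t \in \Van(F) \;\Longleftrightarrow\; F(\kappa(t)) = 0.
\]
Given $F(\kappa(t)) = 0$, Nakayama on finitely generated $\Orb_{T,t}$-modules combined with direct-limit preservation forces $F$ to vanish on every quasi-coherent sheaf supported at $t$; boundedness is what makes the Nakayama hypothesis applicable.

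For constructibility, I would proceed by noetherian induction on closed subschemes of $T$, reducing to an integral $T_0 = \spec A_0$ with generic point $\eta$. If $\eta \notin \Van(F)$, generization-stability forces $\Van(F) \cap T_0 = \emptyset$, since every point of $T_0$ has $\eta$ as a generization. If $\eta \in \Van(F)$, the task is to show that $\Van(F) \cap T_0$ contains a dense open of $T_0$. Using half-exactness and direct-limit preservation on the presentation $K(\eta) = \varinjlim_{0 \neq a \in A_0} A_0[1/a]$, where each $A_0[1/a]$ arises from the two-term complex $A_0 \xrightarrow{a} A_0$, one derives the identity
\[
F(\kappa(\eta)) = F(i_{*}\Orb_{T_0}) \otimes_{A_0} K(\eta).
\]
Since $F(\kappa(\eta)) = 0$ and $F(i_{*}\Orb_{T_0})$ is a coherent $A_0$-module by boundedness, there is some $0 \neq f \in A_0$ with $f \cdot F(i_{*}\Orb_{T_0}) = 0$. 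The GS property of $F_{T_0}$ supplied by CS then furnishes a dense open $U \subseteq T_0$ on which $F(i_{*}\Orb_{T_0}) \otimes \kappa(u) \twoheadrightarrow F(\kappa(u))$ for every finite-type point $u \in U$, and combining with the vanishing of $F(i_{*}\Orb_{T_0})$ on $D(f)$ yields $F(\kappa(u)) = 0$, hence $u \in \Van(F)$, for every finite-type $u \in D(f) \cap U$.

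The main obstacle is propagating this conclusion from the finite-type points of $D(f) \cap U$ to every point of that open, since in a non-Jacobson $T_0$ the finite-type points need not be dense. For an arbitrary $t \in D(f)\cap U$, I would apply the same circle of ideas to the local functor $M \mapsto F(j_{t*}M)$ on $\QCOH{\spec \Orb_{T_0,t}}$: the vanishing $F(i_{*}\Orb_{T_0})_{\mathfrak{p}_t} = 0$ (obtained because $f$ becomes a unit in $(A_0)_{\mathfrak{p}_t}$) combined with Nakayama on the local ring $\Orb_{T_0,t}$ gives $F(\kappa(t)) = 0$, and hence $t \in \Van(F)$. This secures $D(f) \cap U \subseteq \Van(F)$ and completes the noetherian induction.
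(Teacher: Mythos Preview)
Your argument is essentially the paper's proof: the same EGA openness criterion via generization stability plus ``contains a dense open of each irreducible closed subset it meets'', the same use of the Ogus--Bergman Nakayama lemma (Theorem~\ref{thm:nakayama}), and the same use of Proposition~\ref{prop:lazardlem} and the \GS hypothesis to produce an open on which $F(\kappa(u))=0$ at finite-type points.

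The one place you diverge is the final paragraph, and there the concern is misplaced and the proposed fix does not work. The worry that finite-type points of $D(f)\cap U$ might not be dense is unfounded: in a locally noetherian scheme every nonempty open has closed points, and a closed point of an open subset is locally closed in the ambient scheme, hence of finite type. So every point $t\in D(f)\cap U$ specializes to a finite-type point $t'\in D(f)\cap U$, and since $t'\in\Van(F)$ by what you already showed, generization stability (Lemma~\ref{lem:stab_gen}) gives $t\in\Van(F)$. This is exactly how the paper concludes (the sentence ``Using Theorem~\ref{thm:nakayama} we infer that $U_0\subset\Van(F)\cap |T_0|$'' tacitly invokes generization stability once more).

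Your alternative route does not go through as written: from $F(i_*\Orb_{T_0})_{\mathfrak{p}_t}=0$ you cannot deduce $F(\kappa(t))=0$ by Nakayama. Classical Nakayama on the module $F(i_*\Orb_{T_0})$ only gives $F(i_*\Orb_{T_0})\otimes\kappa(t)=0$, which is not $F(\kappa(t))$ unless $t$ is the generic point; and the Ogus--Bergman version (Theorem~\ref{thm:nakayama}) runs in the opposite direction---it deduces vanishing on all modules from vanishing on residue fields, not the reverse. For a merely half-exact functor, the vanishing $G(\Orb_{T_0,t})=0$ does not propagate along the quotient map $\Orb_{T_0,t}\twoheadrightarrow\kappa(t)$. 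Replace this paragraph with the one-line generization argument and your proof matches the paper's.
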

Functors of the above type occur frequently in algebraic
geometry. 
\begin{ex}\label{ex:cmplx_good_properties}
  Let $T$ be an affine noetherian scheme and let $Q \in
  \DCAT^-_{\COHB}(T)$. Then, for all $i\in \Z$, the functors on
  quasi-coherent $\Orb_T$-modules given by $M\mapsto
  \Ext^i_{\Orb_T}(Q,M)$ and $M\mapsto \Tor^{\Orb_T}_i(Q,M)$ are
  additive, bounded, half-exact, commute
  with direct limits, and \CB.
\end{ex}
\begin{ex}
  Let $T$ be an affine noetherian scheme and let $p \colon X
  \to T$ be a morphism that is projective and flat. Then the functor $M\mapsto
  \Gamma(X,p^*M)$ is \CB. Indeed, one interpretation of
  the Cohomology and Base Change Theorem asserts that the functor $M\mapsto \Gamma(X,p^*M)$ is of the form
  given in Example \ref{ex:cmplx_good_properties}.  
\end{ex}
\begin{ex}\label{ex:coherent-are-bounded-and-CB}
Let $T$ be an affine noetherian scheme. An additive functor $F\colon \QCOH{T} \to
\AB$, commuting with direct limits, is \emph{coherent}~\cite{MR0212070} if
there exists a homomorphism $M\to N$ of coherent $\Orb_T$-modules such that
$F(-)=\coker(\Hom_{\Orb_T}(N,-)\to\Hom_{\Orb_T}(M,-))$. It is easily seen that
a coherent functor is \CB and bounded. Indeed, boundedness is obvious and if
$i\colon T_0\hookrightarrow T$ is an integral closed subscheme, then
$F|_{T_0}=\coker(\Hom_{\Orb_{T_0}}(i^*N,-)\to\Hom_{\Orb_{T_0}}(i^*M,-))$ and
after passing to a dense open subscheme, we may assume that $i^*N$ and $i^*M$
are flat. Then $F|_{T_0}(-)=\coker((i^*N)^\vee\to
(i^*M)^\vee)\otimes_{\Orb_{T_0}} (-)$ commutes with all tensor products. It is
well-known, and easily seen, that the functors of the previous two examples are
coherent.

Conversely, let $F\colon \QCOH{T} \to \AB$ be a half-exact bounded additive
functor that commutes with direct limits and is \CS. Then for every
integral closed subscheme $T_0\hookrightarrow T$, there is an open dense
subscheme $U_0\subset T_0$ such that $F|_{U_0}$ is coherent. In particular, for
half-exact bounded additive functors that commute with direct limits,
\CS implies \CB.
\end{ex}
The main ingredient in the proof of Theorem~\ref{thm:flenner_vl}
is a remarkable Nakayama Lemma for half-exact functors, due to
A.\ Ogus and G.\ Bergman \cite[Thm.\ 2.1]{MR0302633}. We state the
following amplification, which follows from the mild strengthening
given in \cite[Cor.\ 6.5]{hallj_coh_results} and Lemma
\ref{lem:stab_gen}.  
\begin{thm}\label{thm:nakayama}
  Let $T$ be an affine noetherian scheme and let $F \colon \QCOH{T} \to
  \AB$ be a half-exact, additive, and bounded functor that commutes with direct
  limits. Then
  \[
  \Van(F) = \{ t \in |T| \suchthat
  F(\kappa(t)) = 0 \}.
  \]
  In particular, if $F(\kappa(t))=0$ for all closed points $t\in |T|$, then $F
  \equiv 0$.
\end{thm}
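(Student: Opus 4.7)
The plan is to reduce to the Ogus--Bergman Nakayama Lemma for half-exact functors on modules over a local noetherian ring, handling the two inclusions defining $\Van(F)$ separately, and then to derive the ``in particular'' clause from Lemma~\ref{lem:stab_gen}.

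For the easy direction $\Van(F)\subseteq\{t:F(\kappa(t))=0\}$, I would use the localization $g\colon \spec(\Orb_{T,t})\to T$. Since $g_*$ is exact and identifies $\QCOH{\spec(\Orb_{T,t})}$ with the quasi-coherent sheaves on $T$ supported in $\spec(\Orb_{T,t})$, the condition $t\in\Van(F)$ is equivalent to $F_{\spec(\Orb_{T,t})}\equiv 0$; applying this to $\kappa(t)$ (whose pushforward to $T$ is $\kappa(t)$) gives $F(\kappa(t))=0$.

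For the reverse inclusion, set $A=\Orb_{T,t}$ with maximal ideal $\mathfrak{m}$ and residue field $\kappa$. The same localization reduces the claim to: for a half-exact, additive, bounded $F_A\colon \QCOH{\spec A}\to\AB$ commuting with direct limits, $F_A(\kappa)=0$ implies $F_A\equiv 0$. This is Ogus--Bergman~\cite{MR0302633}, in the strengthened form of~\cite[Cor.~6.5]{hallj_coh_results}. I would reduce (via direct-limit preservation) to finitely generated $A$-modules and perform noetherian induction on $\dim\supp M$. The base case $\supp M=\{\mathfrak{m}\}$ is handled by the filtration of $M$ by $\mathfrak{m}^i M$, whose quotients are finite-dimensional $\kappa$-vector spaces. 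For the inductive step, prime avoidance furnishes $x\in\mathfrak{m}$ outside every top-dimensional minimal prime of $\supp M$, so $\supp M[x]$ and $\supp M/xM$ both have strictly smaller dimension. Half-exactness applied to
\[
0\to M[x]\to M\to xM\to 0\quad\text{and}\quad 0\to xM\to M\to M/xM\to 0,
\]
together with the inductive vanishing of $F_A(M[x])$ and $F_A(M/xM)$, yields an injection $F_A(M)\hookrightarrow F_A(xM)$ and a surjection $F_A(xM)\twoheadrightarrow F_A(M)$ whose composition is multiplication by $x$. I expect the main obstacle to be concluding $F_A(M)=0$ from exactly this data: this is a Nakayama-style argument that genuinely uses boundedness of $F_A(M)$ as a finitely generated $A$-module, and it is precisely this step that the ``mild strengthening'' of~\cite[Cor.~6.5]{hallj_coh_results} formalizes.

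Finally, the ``in particular'' clause is an immediate corollary: if $F(\kappa(t))=0$ at every closed $t$, the equality just proved places every closed point into $\Van(F)$, and Lemma~\ref{lem:stab_gen} upgrades this to $\Van(F)=|T|$ via generization-stability, whence $F\equiv 0$.
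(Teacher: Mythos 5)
Your overall architecture---the easy inclusion via the localization $g\colon\spec(\Orb_{T,t})\to T$, the reduction of the hard inclusion to the local ring $A=\Orb_{T,t}$ (using Lazard's theorem, Proposition~\ref{prop:lazardlem}, to see that boundedness, half-exactness and direct-limit preservation descend to the localization), and the derivation of the ``in particular'' clause from Lemma~\ref{lem:stab_gen}---matches what the paper does: the paper gives no internal proof, deducing the statement from \cite[Thm.~2.1]{MR0302633}, its strengthening in \cite[Cor.~6.5]{hallj_coh_results}, and Lemma~\ref{lem:stab_gen}. Up to the point where you invoke Ogus--Bergman you are therefore on firm ground, and you would have been entitled to stop there.

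The sketch you then give of the Ogus--Bergman induction has a genuine gap, sitting exactly at the step you flag. From your two exact sequences you extract an injection $\alpha\colon F_A(M)\hookrightarrow F_A(xM)$ and a surjection $\beta\colon F_A(xM)\twoheadrightarrow F_A(M)$ with $\beta\alpha$ and $\alpha\beta$ both equal to multiplication by $x$. This data, even together with finite generation of $F_A(M)$ and $F_A(xM)$, does \emph{not} force $F_A(M)=0$: take $A=k[[x]]$, $F_A(M)=F_A(xM)=A$, $\alpha$ multiplication by $x$ and $\beta$ the identity. So no Nakayama-style argument closes your inductive step as stated; the trouble is that half-exactness gives injectivity of $F_A(M)\to F_A(xM)$ but says nothing about its cokernel, and it is that cokernel which controls $F_A(M)/xF_A(M)$. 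The standard repair is to run the induction over the integral quotients $A/\mathfrak{p}$ only (a prime filtration plus half-exactness reduces a general finitely generated $M$ to these). For $\mathfrak{p}\neq\mathfrak{m}$ choose $x\in\mathfrak{m}\setminus\mathfrak{p}$; then multiplication by $x$ is injective on $A/\mathfrak{p}$, your two sequences collapse to the single sequence $0\to A/\mathfrak{p}\xrightarrow{x}A/\mathfrak{p}\to A/(\mathfrak{p}+xA)\to 0$, the inductive hypothesis (applied to a prime filtration of $A/(\mathfrak{p}+xA)$, whose supports have strictly smaller dimension) kills $F_A(A/(\mathfrak{p}+xA))$, and half-exactness now gives that $x$ acts \emph{surjectively} on $F_A(A/\mathfrak{p})$---only then does ordinary Nakayama, via boundedness, apply. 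A last, minor, point: $\Van(F)=|T|$ does not literally say $F\equiv 0$, since a general $M$ need not be supported in any single $\spec(\Orb_{T,t})$; one more application of Lazard's theorem, namely $F(M)_{\mathfrak{m}}\cong F(M_{\mathfrak{m}})=0$ for every maximal ideal $\mathfrak{m}$, finishes the argument.
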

\begin{rem}\label{rem:nakayama-Flenner}
Let $F$ be as in Theorem~\ref{thm:nakayama} and let $I\subset A$ be an ideal.
Then Flenner proves that the natural map
$F(M)\otimes_A \hat{A}_{/I}\to \varprojlim_n F(M/I^nM)$ is injective for every
finitely generated $A$-module $M$. In fact,
this is the special case $X=Y=\spec A$ of~\cite[Kor.~6.3]{MR638811}. The
Ogus--Bergman Nakayama lemma is an immediate consequence of the injectivity
of this map.
\end{rem}
Before we address vanishing loci of functors, the following simple
application of 
Lazard's Theorem \cite{MR0168625}, 
which appeared in \cite[Prop.\ 6.2]{hallj_coh_results}, will be a
convenient tool to have at our disposal. 
\begin{prop}\label{prop:lazardlem}
  Let $T=\spec(A)$ be an \emph{affine} scheme and let $F \colon 
  \QCOH{T} \to \AB$  be an additive functor
  that commutes with direct limits. Let $M$ and $L$ be $A$-modules. If
  $L$ is \emph{flat}, then the natural map: 
  \[
  F(M)\tensor_A L \to F(M\tensor_A L)
  \]
  is an isomorphism. In particular, for any
  $A$-algebra $B$ and any flat
  $B$-module $L$, the natural map:
  \[
  F(B) \tensor_B L \to F(L)
  \]
  is an isomorphism. 
\end{prop}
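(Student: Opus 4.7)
The plan is to reduce to the case where $L$ is a finite free module and then invoke Lazard's Theorem to handle the general flat case by passing to a filtered colimit. There is no real obstacle; the only delicate point is making sure the $B$-module bookkeeping is correct in the second statement, which is why a direct application of the first part does not suffice (a flat $B$-module need not be flat as an $A$-module).

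\emph{Step 1 (trivial case).} When $L=A$, the natural map $F(M)\tensor_A A \to F(M\tensor_A A)$ is, by construction, the identity map $F(M)\to F(M)$. By the additivity of $F$ and the compatibility of $\tensor_A$ with finite direct sums, this immediately extends to $L=A^n$ for any $n\geq 0$: both sides are canonically identified with $F(M)^n$, and the natural transformation intertwines the identifications.

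\emph{Step 2 (general flat $L$).} By Lazard's Theorem~\cite{MR0168625}, any flat $A$-module $L$ can be written as a filtered colimit $L=\varinjlim_i A^{n_i}$ of finitely generated free $A$-modules. Since tensor products commute with direct limits in each argument, $F(M)\tensor_A L = \varinjlim_i F(M)\tensor_A A^{n_i}$, and since $F$ commutes with direct limits by hypothesis, $F(M\tensor_A L) = \varinjlim_i F(M\tensor_A A^{n_i})$. The natural transformation is compatible with the colimit presentations, and each finite free stage is an isomorphism by Step~1, so the colimit is an isomorphism.

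\emph{Step 3 (the $B$-module version).} For the second assertion, apply the same argument after changing the scalars: write the flat $B$-module $L$ as a filtered colimit $\varinjlim_i B^{n_i}$ via Lazard's Theorem applied over $B$. For each finite free module $B^{n_i}$, additivity of $F$ gives $F(B^{n_i}) \cong F(B)^{n_i} \cong F(B)\tensor_B B^{n_i}$, and the natural map realises this identification. Since $F$ commutes with direct limits and $\tensor_B$ commutes with direct limits in each argument, taking $\varinjlim_i$ yields the desired isomorphism $F(B)\tensor_B L \xrightarrow{\sim} F(L)$.
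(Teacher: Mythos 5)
Your proof is correct and is exactly the argument the paper has in mind: it does not reprove the proposition but describes it as ``a simple application of Lazard's Theorem'' quoted from \cite[Prop.\ 6.2]{hallj_coh_results}, namely reduction to finite free modules by additivity followed by passage to a filtered colimit using that $F$ commutes with direct limits. Your Step 3 correctly flags that restricting scalars destroys flatness, and rerunning Lazard over $B$ (equivalently, applying the first part to the additive, colimit-preserving functor $F$ composed with restriction of scalars along $A\to B$) handles it.
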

We may now prove Flenner's theorem.
\begin{proof}[Proof of Theorem~\ref{thm:flenner_vl}]
  By \cite[\textbf{IV}.1.10.1]{EGA}, the set $\Van(F)$ is open if and
  only if it is closed under generization and its intersection with
  an irreducible closed subset $T_0 \subset |T|$ contains a
  non-empty open subset or is empty. By Lemma \ref{lem:stab_gen}, we have
  witnessed the stability under generization. Thus it remains to
  address the latter claim. 

  Let $T_0 \hookrightarrow T$ be an integral closed subscheme. If
  $|T_0|\cap \Van(F) \neq \emptyset$, then the generic point $\eta
  \in |T_0|$ belongs to $\Van(F)$ (Lemma
  \ref{lem:stab_gen}), thus
  $F(\kappa(\eta)) = 0$. Since the functor $F$ is, by assumption,
  \CS,  there exists a dense open subset $U_0 \subset |T_0|$
  such that, $\forall u\in U_0$ of finite type, the map  
  $F_{T_0}(\Orb_{T_0})\tensor_{\Gamma(\Orb_{T_0})}\kappa(u) \to  
  F(\kappa(u))$ is surjective. 

  As $\kappa(\eta)$ is a quasi-coherent and flat $\Orb_{T_0}$-module,
  the natural map 
  $F_{T_0}(\Orb_{T_0})\tensor_{\Gamma(\Orb_{T_0})}
  \kappa(\eta)\to F(\kappa(\eta))$ is an isomorphism by Proposition
  \ref{prop:lazardlem}. But $\eta
  \in \Van(F)$, thus the coherent
  $\Gamma(\Orb_{T_0})$-module $F_{T_0}(\Orb_{T_0})$ is
  torsion. Hence there is a dense open subset $U_0  \subset |T_0|$
  with the property that if $u\in U_0$ is of finite type, then $F(\kappa(u)) = 
  0$. Using Theorem \ref{thm:nakayama} we infer that $U_0 \subset
  \Van(F) \cap |T_0|$.   
\end{proof}
We record for future
reference a useful lemma.
\begin{lem}\label{lem:cons_inj_surj}
  Let $T=\spec(A)$ be an affine noetherian scheme and let $F \colon 
  \QCOH{T} \to \AB$ be an additive functor.
  \begin{enumerate}
  \item\label{lem:cons_inj_surj:item:bdd_he} If the functor $F$ is
    \emph{half-exact}, then $F$ is bounded if and only if $F$ is weakly
    bounded.
  \item\label{lem:cons_inj_surj:item:bdd} If the functor $F$ is (weakly)
    bounded, then any
    additive sub-quotient functor of $F$ is (weakly) bounded. 
  \item\label{lem:cons_inj_surj:item:surj} If $F$ is \GS (resp.\ \CS),
    then so is any additive quotient functor of $F$. 
  \item\label{lem:cons_inj_surj:item:inj} If $F$ is weakly bounded and 
    \CI, then so is any additive subfunctor of $F$.
  \item Consider an exact sequence of additive functors $\QCOH{T} \to
    \AB$:
    \[
    \xymatrix{H_1 \ar[r] & H_2 \ar[r] & H_3 \ar[r] & H_4.}
    \]
    \begin{enumerate}
    \item\label{lem:cons_inj_surj:item:4surj} If $H_1$ and $H_3$ are \CS and
      $H_4$ is \CI and weakly bounded, then $H_2$ is \CS.
    \item\label{lem:cons_inj_surj:item:4inj} If $H_1$ is \CS, $H_2$ and $H_4$
      are \CI, and $H_4$ is weakly bounded, then $H_3$ is \CI.
    \end{enumerate} 
  \end{enumerate}
  If the scheme $T$ is reduced, then
  \itemref{lem:cons_inj_surj:item:inj}, \itemref{lem:cons_inj_surj:item:4surj},
  and \itemref{lem:cons_inj_surj:item:4inj} hold with \GI and \GS instead of
  \CI and \CS.
\end{lem}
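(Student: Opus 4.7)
The plan is to dispatch items \itemref{lem:cons_inj_surj:item:bdd_he}--\itemref{lem:cons_inj_surj:item:surj} as formal consequences of d\'evissage, noetherian hypotheses, and right-exactness of tensor, and then use items \itemref{lem:cons_inj_surj:item:bdd}--\itemref{lem:cons_inj_surj:item:inj} to feed snake-lemma-style diagram chases for items \itemref{lem:cons_inj_surj:item:4surj} and \itemref{lem:cons_inj_surj:item:4inj}.

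For \itemref{lem:cons_inj_surj:item:bdd_he} the forward direction is immediate. For the converse, every finitely generated $A$-module admits a filtration whose successive quotients are of the form $A/\mathfrak{p}$; half-exactness, together with the fact that extensions of finitely generated modules over a noetherian ring are finitely generated, lets me induct along such a filtration and reduce boundedness to weak boundedness. Item \itemref{lem:cons_inj_surj:item:bdd} is formal, since sub-quotients of finitely generated (resp.\ coherent) modules over a noetherian ring are again finitely generated (resp.\ coherent). For \itemref{lem:cons_inj_surj:item:surj}, I restrict a surjection $F\twoheadrightarrow G$ to any integral closed subscheme $T_0=V(\mathfrak{p})$ and chase the $2\times 2$ diagram with rows $F(A/\mathfrak{p})\tensor_A\kappa(u)\to F(\kappa(u))$ and $G(A/\mathfrak{p})\tensor_A\kappa(u)\to G(\kappa(u))$: tensor is right-exact and the right vertical is surjective by hypothesis, so the bottom inherits surjectivity.

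For \itemref{lem:cons_inj_surj:item:inj}, I restrict the injection $G\hookrightarrow F$ to $T_0=V(\mathfrak{p})$ and consider the short exact sequence $0\to G(A/\mathfrak{p})\to F(A/\mathfrak{p})\to N\to 0$ of $A/\mathfrak{p}$-modules. By weak boundedness of $F$ and \itemref{lem:cons_inj_surj:item:bdd}, $N$ is coherent over the noetherian domain $A/\mathfrak{p}$, hence locally free on a dense open; on that open $\Tor_1^{A/\mathfrak{p}}(N,\kappa(u))$ vanishes, so tensoring with $\kappa(u)$ preserves the inclusion, and composing with the \GI injection for $F$ (which factors through $G(\kappa(u))$) yields the needed injectivity.

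For items \itemref{lem:cons_inj_surj:item:4surj} and \itemref{lem:cons_inj_surj:item:4inj}, I split the four-term sequence via $K_3=\mathrm{im}(H_2\to H_3)$ and $Q_3=\mathrm{im}(H_3\to H_4)$, producing a short exact sequence $0\to K_3\to H_3\to Q_3\to 0$ in which $K_3$ is a quotient of $H_2$ and $Q_3$ is a subfunctor of $H_4$. For \itemref{lem:cons_inj_surj:item:4surj}, items \itemref{lem:cons_inj_surj:item:surj} and \itemref{lem:cons_inj_surj:item:inj} propagate the hypotheses ($K_3$ is \CS as a quotient of $H_1$, $Q_3$ is \CI and weakly bounded as a subfunctor of $H_4$), and two successive snake chases---first through $0\to K_3\to H_3\to Q_3\to 0$ to obtain \CS of $K_3$, then through $0\to \mathrm{im}(H_1\to H_2)\to H_2\to K_3\to 0$---deliver \CS of $H_2$. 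For \itemref{lem:cons_inj_surj:item:4inj}, the chase starts from $x\in H_3(A/\mathfrak{p})\tensor_A\kappa(u)$ killed in $H_3(\kappa(u))$: its image in $H_4(A/\mathfrak{p})\tensor_A\kappa(u)$ vanishes by \GI of $H_4$, and the main subtlety---where weak boundedness of $H_4$ is essential---is promoting this to vanishing already in $Q_3(A/\mathfrak{p})\tensor_A\kappa(u)$, using that the weakly bounded cokernel $H_4/Q_3$ is locally free on a dense open to render $Q_3(A/\mathfrak{p})\tensor_A\kappa(u)\hookrightarrow H_4(A/\mathfrak{p})\tensor_A\kappa(u)$ injective there. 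Then $x$ lifts first to $K_3(A/\mathfrak{p})\tensor_A\kappa(u)$, and by right-exactness of tensor to some $w\in H_2(A/\mathfrak{p})\tensor_A\kappa(u)$; \CS of $H_1$ lets me correct $w$ by something coming from $H_1(A/\mathfrak{p})\tensor_A\kappa(u)$, and \CI of $H_2$ then forces $w$ itself into that image, whence $x=0$. For the reduced case the identical arguments apply without passing to integral closed subschemes: the dense open needed for the \GI/\GS conclusion lives directly in $T$, while weak boundedness still supplies the flatness input at each minimal prime.
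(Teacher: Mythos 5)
Your proof is correct and follows essentially the same route as the paper's: d\'evissage by integral closed subschemes plus half-exactness for \itemref{lem:cons_inj_surj:item:bdd_he}, generic flatness of the weakly bounded quotient (resp.\ cokernel) to make the tensored sequences exact on a dense open for \itemref{lem:cons_inj_surj:item:inj}, and four-lemma-style chases for \itemref{lem:cons_inj_surj:item:4surj} and \itemref{lem:cons_inj_surj:item:4inj} (which the paper leaves to the reader as ``a similar argument and the $4$-Lemmas''). The only blemish is the parenthetical claim that $K_3$ is \CS ``as a quotient of $H_1$''---it is a quotient of $H_2$, and its being \CS is in any case exactly what your first chase establishes---so nothing is lost.
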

\begin{proof}
  For claim \itemref{lem:cons_inj_surj:item:bdd_he}, note that any
  coherent $\Orb_T$-module $M$ admits a finite filtration whose
  successive quotients are of the form ${i}_*\Orb_{T_0}$, where 
  ${i} \colon T_0 \hookrightarrow T$ is a closed immersion with $T_0$
  integral. Induction on the length of the filtration, combined 
  with the half-exactness of the functor $F$, proves the claim.
  Claims \itemref{lem:cons_inj_surj:item:bdd} and
  \itemref{lem:cons_inj_surj:item:surj} are trivial. For
  \itemref{lem:cons_inj_surj:item:inj}, it is sufficient to prove
  the claim about \GI and we can assume that $T$ is a disjoint union of
  integral schemes. Fix an additive subfunctor $K \subset F$, then
  there is an exact 
  sequence of additive functors: $0 \to K \to F \to H \to 0$. By
  \itemref{lem:cons_inj_surj:item:bdd} we see that $H$ is
  weakly bounded and so $H(\Orb_T)$ is a finitely generated $A$-module. As $A$
  is reduced, generic
  flatness implies that there is a dense open subset $U \subset |T|$
  such that $H(\Orb_{T})_u$ is a flat $A$-module $\forall u\in U$.
  Thus, for all $u\in |U|$ the
  sequence:
  \[
  \xymatrix@-0.8pc{0 \ar[r] & K(\Orb_{T})
    \tensor_A \kappa(u) \ar[r] 
    & F(\Orb_{T}) \tensor_A \kappa(u) \ar[r] &
    H(\Orb_{T}) \tensor_A \kappa(u) \ar[r] & 0}
  \]
  is exact. By shrinking $U$, we may further assume that the map
  $F(\Orb_{T})\tensor_A \kappa(u) \to
  F(\kappa(u))$ is injective for all points $u\in |U|$ of finite type. We then
  conclude that $K$ is \GI from the
  commutative diagram:  
  \[
  \xymatrix@-0.8pc{K(\Orb_{T})
    \tensor_A \kappa(u) \ar@{(->}[r]  \ar[d] 
    & F(\Orb_{T}) \tensor_A \kappa(u)
    \ar@{(x->}[d] \\ K(\kappa(u)) \ar@{(->}[r] & F(\kappa(u)).}
  \]
  Claims
  \itemref{lem:cons_inj_surj:item:4surj} and 
  \itemref{lem:cons_inj_surj:item:4inj} follow from a similar argument
  and the $4$-Lemmas. 
\end{proof}
We conclude this section with a criterion for a functor to be
\GI (and consequently a criterion for a functor to be \CI). This will
be of use when we express Artin's criteria for algebraicity without
obstruction theories.
\begin{prop}\label{prop:cons_expanded}
  Let $T=\spec(A)$ be an affine and integral noetherian scheme with
  function field  $K$. Let $F \colon \QCOH{T} \to \AB$  be an additive functor
  that commutes with direct limits such that $F(\Orb_T)$ is a finitely
  generated $A$-module. Then $F$ is \GI if and only if the following
  condition is satisfied:
\begin{enumerate}
\renewcommand{\theenumi}{$\dagger$}
\renewcommand{\labelenumi}{\upshape{($\dagger$)}}
\item\label{prop:GI-expanded-condition}
    for any $f\in A$, any free $A_f$-module $M$, and
    $\omega \in F(M)$ such 
    that for all non-zero maps $\epsilon \colon M \to K$ we have
    $\epsilon_*\omega \neq 0$ in $F(K)$, there exists a
    dense open subset $V_\omega \subset D(f)\subset |T|$ such that for every
    non-zero map $\gamma \colon M \to \kappa(v)$, where $v\in V_\omega$ is of finite
    type, we have
    $\gamma_*\omega \neq 0$ in $F(\kappa(v))$.
\end{enumerate}
\end{prop}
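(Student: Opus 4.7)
The unifying observation is Proposition~\ref{prop:lazardlem}: for any flat $A$-module $L$ the natural map $F(A)\tensor_A L\to F(L)$ is an isomorphism. In particular, $F(K)=F(A)\tensor_A K$ has finite $K$-dimension $e:=\dim_K F(K)$, and, since $A$ is an integral domain, generic flatness of the finitely generated module $F(A)$ yields some $g\in A\setminus 0$ and $\eta_1,\dots,\eta_e\in F(A)$ whose images form an $A_g$-basis of $F(A)\tensor_A A_g$. Consequently the $\eta_i$ map to a $K$-basis of $F(K)$ and, for every $v\in D(g)$, to a $\kappa(v)$-basis of $F(A)\tensor_A\kappa(v)$.

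To prove $(\dagger)\Rightarrow\GI$, apply the condition with $f=1$, $M=A^e$, and $\omega=(\eta_1,\dots,\eta_e)\in F(A)^e=F(M)$. A non-zero map $\epsilon\colon A^e\to K$ is a non-zero tuple $(c_1,\dots,c_e)\in K^e$, and $\epsilon_*\omega=\sum c_i\eta_i^{(K)}\neq 0$ by the $K$-linear independence of the $\eta_i^{(K)}$. Condition~$(\dagger)$ produces a dense open $V_\omega\subset |T|$ on which $\eta_1^{(v)},\dots,\eta_e^{(v)}\in F(\kappa(v))$ are $\kappa(v)$-linearly independent for every $v\in V_\omega$ of finite type. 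On the dense open $V_\omega\cap D(g)$, the classes $\bar\eta_i\in F(A)\tensor_A\kappa(v)$ form a basis of the source and map to a linearly independent family, so the natural map $F(A)\tensor_A\kappa(v)\to F(\kappa(v))$ is injective; this exhibits $F$ as \GI.

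Conversely, assume $F$ is \GI with dense open $U$, and let $(f,M,\omega)$ satisfy the hypothesis of $(\dagger)$. A standard reduction---using that $F$ preserves direct limits, so $\omega$ is supported on a finite-rank free summand $M_0\subseteq M$, and that the non-vanishing hypothesis forces $M_0=M$ (any projection to a complement would give a non-zero $\epsilon$ with $\epsilon_*\omega=0$)---lets us assume $M=A_f^n$, with $\omega=(\omega_1,\dots,\omega_n)\in F(A_f)^n=(F(A)\tensor_A A_f)^n$. Expanding in the basis $\eta_j$ gives a matrix $(a_{ij})\in A_{fg}^{n\times e}$ with $\omega_i=\sum_j a_{ij}\eta_j$. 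The hypothesis of $(\dagger)$ is precisely that the matrix $(a_{ij}^{(K)})\in K^{n\times e}$ has rank $n$; choose a non-vanishing $n\times n$ minor $h\in A_{fg}$. On $D(fgh)$ the matrix $(a_{ij}^{(v)})$ has pointwise rank $n$, so the $\omega_i^{(v)}$ are $\kappa(v)$-linearly independent in $F(A)\tensor_A\kappa(v)$, and, by \GI on $U$, also in $F(\kappa(v))$ for $v\in U\cap D(fgh)$. Thus $V_\omega=U\cap D(fgh)$, a dense open of $D(f)$, verifies $(\dagger)$.

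Neither direction is especially delicate: once Proposition~\ref{prop:lazardlem} has converted $F$ on flat modules into tensor products, the whole statement reduces to the constancy of matrix rank on a dense open of $T$, with generic flatness of $F(A)$ (available thanks to the integrality of $A$) doing the work.
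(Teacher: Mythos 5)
Your argument is correct and is essentially the paper's own proof written in explicit coordinates: both use Proposition~\ref{prop:lazardlem} together with generic freeness of the finitely generated module $F(A)$ to identify $\omega\in F(M)$ with a matrix (equivalently, with the homomorphism $\bar\omega\colon M^\vee\to F(A)_f$), observe that the hypothesis of ($\dagger$) says exactly that this has full rank over $K$ and hence over a dense open set, and obtain \GI from ($\dagger$) by feeding it the tautological element attached to a basis of $F(A)_g$ (the paper's $M=F(A)_f^\vee$ with the canonical $\omega$). The only point where you go beyond the paper is the explicit reduction from an arbitrary free $A_f$-module to one of finite rank, using that $F$ preserves direct limits and that a projection killing a finite-rank summand containing $\omega$ would violate the non-vanishing hypothesis; the paper's proof silently restricts to finite rank, so this is a small but genuine improvement rather than a divergence.
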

\begin{proof}
  Let $M$ be a free $A_f$-module of finite rank and let
  $M^\vee=\Hom_{A_f}(M,A_f)$. Then the canonical homomorphism
  $F(A)_f\otimes_{A_f} M\to F(M)$ is an isomorphism
  (Proposition~\ref{prop:lazardlem}) so that there is a one-to-one
  correspondence between elements $\omega\in F(M)$ and homomorphisms
  $\overline{\omega}\colon M^\vee\to F(A)_f$. Moreover, $\overline{\omega}$ is
  injective if and only if $\overline{\omega}\otimes_A K\colon M^\vee\otimes_A K
  \to F(A)\otimes_A K=F(K)$ is injective and this happens exactly when
  $\epsilon_*\omega \neq 0$ in $F(K)$ for every non-zero map $\epsilon \colon M \to
  K$.

  Let $t\in |T|$ and let $\delta_t\colon F(A)\otimes_A \kappa(t)\to F(\kappa(t))$
  denote the natural map. Then condition \itemref{prop:GI-expanded-condition} can
  be reformulated as: for any free $A_f$-module $M$ of finite rank and any
  injective homomorphism $\overline{\omega}\colon M^\vee\to F(A)_f$, there exists
  a dense open subset $V_\omega\subset D(f)$ such that $\delta_t\circ
  \bigl(\overline{\omega}\otimes_A \kappa(t)\bigr)$ is injective for all points
  $t\in V_\omega$ of finite type.

  To show that \itemref{prop:GI-expanded-condition} implies that $F$ is \GI,
  choose $f\in A\setminus 0$ such that $F(A)_f$ is free,
  let $M=F(A)_f^\vee$ and let $\omega\in F(M)$ correspond to the inverse of the
  canonical isomorphism $F(A)_f\to M^\vee$. If \itemref{prop:GI-expanded-condition}
  holds, then there exists an open subset $V$ such that $\delta_t$ is injective
  for all $t\in V_\omega$, i.e., $F$ is \GI.

  Conversely, if $F$ is \GI, then there is an open subset $V$ such that
  $\delta_t$ is injective for all $t\in V$ of finite type. Given a finite free
  $A_f$-module
  $M$ and $\omega\in F(M)$, we let $V_\omega=V\cap W$ where $W\subset D(f)$ is an
  open dense subset over which the cokernel of $\overline{\omega}$ is flat.  If
  $\overline{\omega}$ is injective, it then follows that $\delta_t\circ
  \bigl(\overline{\omega}\otimes_A \kappa(t)\bigr)$ is injective for all
  $t\in V_\omega$ of finite type, that is, condition
  \itemref{prop:GI-expanded-condition} holds.
\end{proof}

\section{Openness of formal versality}\label{sec:op_fv}
As the title suggests, we now address the openness of the formally
versal locus. Let $S$ be a scheme. We isolate the following conditions
for a $\HNIL$-homogeneous $S$-groupoid $X$.
\begin{boxcnd}[Boundedness of extensions]\label{cnd:bdd_exal}
  For any affine $X$-scheme $T$, locally of finite type over
  $S$, the functor $M\mapsto \Exal_X(T,M)$ is bounded. 
\end{boxcnd}
\begin{boxcnd}[Constructibility of extensions]\label{cnd:cons_exal}
    For any affine $X$-scheme $T$, locally of finite type over
    $S$, the functor $M\mapsto \Exal_X(T,M)$ is \CS.
\end{boxcnd}
To see that these conditions are plausible, observe the following
\begin{lem}\label{lem:bdd_cons_alg_stk_true}
  Let $S$ be a locally noetherian scheme, let $X$ be an algebraic
  $S$-stack, and let $T$ be an affine $X$-scheme. Suppose that both $X$ and
  $T$ are locally of finite type over $S$. Then the functors $M \mapsto \Der_X(T,M)$ and
  $M\mapsto \Exal_X(T,M)$ are bounded and \CB. 
\end{lem}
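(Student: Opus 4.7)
\emph{Proof proposal.} My plan is to reduce the lemma to Example~\ref{ex:cmplx_good_properties} by identifying both functors with Ext groups of the relative cotangent complex. Since $X$ is algebraic and locally of finite type over the locally noetherian scheme $S$, and $T$ is an affine $X$-scheme also locally of finite type over $S$, the structure morphism $T\to X$ is representable and locally of finite type. The standard machinery (Illusie for schemes, Olsson/Laumon--Moret-Bailly for algebraic stacks) then produces a relative cotangent complex $L_{T/X} \in \DCAT^-_{\COHB}(T)$, i.e., a bounded-above complex of quasi-coherent $\Orb_T$-modules with coherent cohomology sheaves, the coherence being a consequence of the finite-type hypothesis.

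Next I would invoke the standard natural isomorphisms
\[
\Der_X(T,M) \cong \Ext^0_{\Orb_T}(L_{T/X},M) \quad\text{and}\quad \Exal_X(T,M) \cong \Ext^1_{\Orb_T}(L_{T/X},M)
\]
for all quasi-coherent $\Orb_T$-modules $M$. The algebraic stack $X$ is $\HA$-homogeneous by~\cite[Prop.~2.1]{2011arXiv1111.4200W}, so the Picard category $\EXAL_X(T,M)$ introduced in Section~\ref{sec:homogeneity} coincides with the usual category of square-zero $X$-extensions of $T$ by $M$. Its group of isomorphism classes and the automorphism group of the trivial extension are then computed by these Ext groups via the universal property of $L_{T/X}$.

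With these identifications in hand, the lemma follows immediately from Example~\ref{ex:cmplx_good_properties} applied to $Q = L_{T/X}$: both $\Der_X(T,-)$ and $\Exal_X(T,-)$ are bounded and \CB (and in fact also half-exact and direct-limit preserving, recovering Lemma~\ref{lem:der_exal_props_record} in this setting). The only nontrivial input is the construction and coherence of $L_{T/X}$ for a locally finite type morphism from an affine scheme to a locally finite type algebraic stack, together with the Ext-theoretic interpretation of extensions; these are standard, so I expect no genuine obstacle beyond citing this machinery.
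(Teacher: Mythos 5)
Your proposal is correct and follows essentially the same route as the paper: the paper's proof likewise cites the existence of $L_{T/X}\in\DCAT^-_{\COHB}(T)$ with $\Der_X(T,M)\cong\Ext^0_{\Orb_T}(L_{T/X},M)$ and $\Exal_X(T,M)\cong\Ext^1_{\Orb_T}(L_{T/X},M)$, and then concludes by Example~\ref{ex:cmplx_good_properties}. The extra remarks you add about $\HA$-homogeneity and coherence of the cohomology of $L_{T/X}$ are consistent with the paper's (briefer) appeal to the same standard machinery.
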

\begin{proof}
  By \cite[Thm.\ 1.1]{MR2206635} there is a
  complex $L_{T/X} \in \DCAT_{\COHB}^-(T)$ such that for all quasi-coherent
  $\Orb_T$-modules $M$, there are natural isomorphisms $\Der_X(T,M)
  \cong \Ext^0_{\Orb_T}(L_{T/X},M)$ and $\Exal_X(T,M) \cong
  \Ext^1_{\Orb_T}(L_{T/X},M)$. The result now follows from a consideration
  of Example \ref{ex:cmplx_good_properties}. 
\end{proof}
In their current form, Conditions \ref{cnd:bdd_exal} and
\ref{cnd:cons_exal} are difficult to verify. In \S\ref{sec:rel_conds}, this
will be rectified. In any case, we can now prove
\begin{thm}\label{thm:fv_art_flenner}
  Let $S$ be a locally noetherian scheme. Let $X$ be an $S$-groupoid
  satisfying the following conditions:
\begin{enumerate}
\item $X$ is limit preserving,
\item $X$ is $\HrCL$-homogeneous,
\item Condition \ref{cnd:bdd_exal} (boundedness of extensions),
\item Condition \ref{cnd:cons_exal} (constructibility of extensions), and
\item Condition \ref{cnd:Zar_loc_exal} (Zariski localization of extensions)
\end{enumerate}
  Let $T$ be an affine $X$-scheme that is 
  locally of finite type over $S$ and let $t\in |T|$ be a \emph{closed}
  point. If $T$ is formally versal at $t\in |T|$, then $T$ is formally versal at every
  point of finite type in a Zariski open neighborhood of $t$.
  In particular, if $X$ is also $\HArt$-homogeneous and $T\to X$ is representable,
  then $T$ is formally smooth
  in a Zariski open neighborhood of~$t$.
\end{thm}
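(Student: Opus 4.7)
The plan is to apply Flenner's openness theorem for the vanishing locus (Theorem~\ref{thm:flenner_vl}) to the functor $F\colon \QCOH{T}\to \AB$ defined by $F(M)=\Exal_X(T,M)$, and then translate the resulting Zariski-open locus into formal versality via the dictionary developed in Section~\ref{sec:fv_fs}.

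First I would verify that $F$ meets the hypotheses of Theorem~\ref{thm:flenner_vl}. Since $T$ is affine and locally of finite type over the locally noetherian scheme $S$, it is noetherian affine. Because $\HrCL$-homogeneity implies both $\HrNIL$- and $\HNIL$-homogeneity (the relevant classes sit inside $\HrCL$), the functor $F$ is additive, half-exact by Lemma~\ref{lem:der_exal_props_record}\itemref{lem:der_exal_props_record:item:he}, and commutes with direct limits by Lemma~\ref{lem:der_exal_props_record}\itemref{lem:der_exal_props_record:item:lp} (invoking limit preservation of $X$). Boundedness and the \CS property are supplied verbatim by Conditions~\ref{cnd:bdd_exal} and~\ref{cnd:cons_exal}. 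Theorem~\ref{thm:flenner_vl} then certifies that $\Van(F)\subseteq |T|$ is Zariski open.

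Next I would locate $t$ in $\Van(F)$. Since $\HCL\subseteq\HrCL$, the implication \itemref{lem:item:smooth:fv}$\Rightarrow$\itemref{lem:item:smooth:Exal_point} of Lemma~\ref{lem:smooth} turns formal versality of $T$ at the closed point $t$ into the vanishing $\Exal_X(T,\kappa(t))=0$, and the Ogus--Bergman Nakayama Lemma (Theorem~\ref{thm:nakayama}) then places $t\in \Van(F)$. Openness provides an open neighborhood $U\subseteq T$ of $t$ with $U\subseteq\Van(F)$. For every finite-type point $u\in |U|$, Theorem~\ref{thm:nakayama} again gives $\Exal_X(T,\kappa(u))=0$, and Proposition~\ref{prop:closed-pts-vs-finite-type-pts}---which is precisely where Condition~\ref{cnd:Zar_loc_exal} is used to pass from closed points to arbitrary finite-type points---delivers formal versality of $T$ at $u$. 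This settles the first assertion.

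For the final clause, assuming additionally that $X$ is $\HArt$-homogeneous and $T\to X$ is representable by algebraic spaces, I would apply Proposition~\ref{prop:artin_prop42} to the open subscheme $U\subseteq T$ found above: it is affine, locally of finite type over $S$, the morphism $U\to X$ is representable, and it is formally versal at every point of finite type (the finite-type points of $U$ being exactly the finite-type points of $T$ contained in $U$, and versality restricting along the open immersion $U\hookrightarrow T$). The proposition then upgrades this to formal smoothness of $U\to X$, which is what is meant by $T\to X$ being formally smooth on a Zariski-open neighborhood of $t$. The whole argument is a careful assembly of the tools already in place; the only genuinely subtle step is the passage from the closed point $t$ to arbitrary finite-type points in a neighborhood, and this is exactly the role of Condition~\ref{cnd:Zar_loc_exal}.
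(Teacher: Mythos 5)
Your proof is correct and follows the paper's own argument essentially step for step: apply Theorem~\ref{thm:flenner_vl} to $M\mapsto\Exal_X(T,M)$, use Lemma~\ref{lem:smooth} and Theorem~\ref{thm:nakayama} to place $t$ in the vanishing locus, and invoke Proposition~\ref{prop:closed-pts-vs-finite-type-pts} to pass to finite-type points. Your final step via Proposition~\ref{prop:artin_prop42} is just the packaged form of the paper's appeal to Lemma~\ref{lem:fsmooth_pt+repr} (combined with Lemma~\ref{lem:smooth}), so there is no substantive difference.
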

\begin{proof}
  By Condition \ref{cnd:bdd_exal} and Lemma
  \ref{lem:der_exal_props_record}, the functor $M\mapsto \Exal_X(T,M)$
  is bounded, half-exact, and preserves direct limits.
  Condition \ref{cnd:cons_exal} now implies that the functor
  $M\mapsto \Exal_X(T,M)$ satisfies the criteria of Theorem
  \ref{thm:flenner_vl}. Thus, $\Van(\Exal_X(T,-))\subset
  |T|$ is a Zariski open subset. By Lemma
  \ref{lem:smooth}\itemref{lem:item:smooth:fv}$\implies$\itemref{lem:item:smooth:Exal_point} and Theorem
  \ref{thm:nakayama}, we have that $t\in \Van(\Exal_X(T,-))$. So,
  there exists an open neighborhood $t\in U \subset |T|$ with 
  $\Exal_X(T,\kappa(u)) = 0$ for all $u\in U$. By
  Proposition~\ref{prop:closed-pts-vs-finite-type-pts}, every
  point $u\in |U|$ of finite type is formally versal. The last assertion follows
  from Lemma~\ref{lem:fsmooth_pt+repr}.
\end{proof}
\section{Automorphisms, deformations, and
  obstructions}\label{sec:aut_def_obs} 
In this section, we introduce the necessary deformation-theoretic
framework that makes it possible to verify Conditions~\ref{cnd:Zar_loc_exal},
~\ref{cnd:bdd_exal} and~\ref{cnd:cons_exal}.
To do this, we recall the formulation of
deformations and obstructions given in \cite[\S6]{hallj_openness_coh}.

Let $S$ be a scheme and let $\Phi \colon Y \to
Z$ be a $1$-morphism of $S$-groupoids. Define the category $\DEF_\Phi$ to have objects the triples
$(T,J,\eta)$, where $T$ is a $Y$-scheme, $J$ is a
quasi-coherent $\Orb_T$-module, and 
$\eta$ is a $Y$-scheme structure on the trivial $Z$-extension of $T$ by
$J$. Graphically, it is the category of 
completions of the following diagram: 
\[
\xymatrix{\ar@{(x->}[d]T \ar[r] & Y \ar[d]^{\Phi} \\ 
T\extn{J} \ar@{-->}[ur]^{\eta} \ar[r] & Z.
}
\]
There is a natural functor $\DEF_\Phi \to \SCH{Y}$ taking $(T,J,\eta)$
to $T$ and we denote the fiber of this functor over the
$Y$-scheme $T$ by $\DEF_\Phi(T)$. There is also a functor
$\DEF_\Phi(T)^\opp \to \QCOH{T}$ taking $(J,\eta)$ to $J$. We denote the fiber
of this functor over a quasi-coherent $\Orb_T$-module $J$ as
$\DEF_{\Phi}(T,J)$. Note 
that this category is naturally pointed by the trivial $Y$-extension
of $T$ by $J$. If the $1$-morphism $\Phi$ is fibered in setoids,
then the category $\DEF_\Phi(T,J)$ is discrete.
By \opcit[, Prop.\ 8.3], if $Y$ and
$Z$ are $\HNIL$-homogeneous, then the groupoid $\DEF_\Phi(T,J)$ is a 
Picard category. Denote the set of isomorphism classes of
$\DEF_\Phi(B,J)$ by $\Def_\Phi(B,J)$. Thus we obtain
$\Gamma(T,\Orb_T)$-linear functors: 
\begin{align*}
  \Def_\Phi(T,-) &\colon \QCOH{T} \to \AB,\quad J \mapsto
  \Def_\Phi(T,J)\\
  \Aut_\Phi(T,-) &\colon \QCOH{T} \to \AB,\quad J \mapsto
  \Aut_{\DEF_\Phi(T,J)}(T\extn{J}).  
\end{align*}
The Lemma that follows is an easy consequence of \cite[Lem.\
6.2]{hallj_openness_coh}.
\begin{lem}\label{lem:def_prop}
  Let $S$ be a scheme and let
  $\Phi \colon Y \to Z$ be a $1$-morphism of $\HCL$-homogeneous
  $S$-groupoids. Let ${i} \colon W \hookrightarrow T$ be a closed
  immersion of $Y$-schemes and let $N$ be a quasi-coherent
  $\Orb_W$-module. Then the natural maps:
  \[
  \Aut_\Phi(T,{i}_*N) \to \Aut_\Phi(W,N) \quad \mbox{and} \quad
  \Def_\Phi(T,{i}_*N) \to \Def_\Phi(W,N), 
  \]
  are isomorphisms. 
\end{lem}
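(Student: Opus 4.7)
The plan is to reduce both statements to the observation that the trivial extension $T\extn{{i}_*N}$ is itself a pushout of $T$ and $W\extn{N}$ along the closed immersion $W \hookrightarrow T$, and then invoke $\HCL$-homogeneity of $Y$ and $Z$ to transport deformation data across this pushout.

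First I would verify the sheaf-theoretic identity
\[
  \Orb_T \oplus {i}_*N \;\cong\; \Orb_T \times_{{i}_*\Orb_W} {i}_*(\Orb_W \oplus N),
\]
where the right-hand side is the structure sheaf of the pushout $T \sqcup_W W\extn{N}$ in the category of $S$-schemes (this pushout exists by Lemma~\ref{lem:homog_pushouts_int} applied with $P=\HCL$, since algebraic stacks are $\HA$-homogeneous and in particular $S$-schemes are). This shows $T\extn{{i}_*N} \cong T \sqcup_W W\extn{N}$ as $S$-schemes, and the analogous identification holds for the iterated extension $T\extn{{i}_*N \oplus {i}_*N}$ used to define the Picard structure.

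Next, since both $Y$ and $Z$ are $\HCL$-homogeneous, Lemma~\ref{lem:homog_pushouts_int} tells us that the diagram
\[
\xymatrix@-0.8pc{W \ar@{(->}[r] \ar@{(->}[d]_{i} & W\extn{N} \ar@{(->}[d] \\
T \ar@{(->}[r] & T\extn{{i}_*N}}
\]
is cocartesian both in $\SCH{Y}$ and in $\SCH{Z}$. Consequently, giving a $Y$-scheme structure on $T\extn{{i}_*N}$ compatible with the fixed $Y$-structure on $T$ and the trivial $Z$-structure on $T\extn{{i}_*N}$ is equivalent to giving a $Y$-scheme structure on $W\extn{N}$ compatible with the restricted $Y$-structure on $W$ and the trivial $Z$-structure on $W\extn{N}$. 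This equivalence is precisely the natural functor $\DEF_\Phi(T,{i}_*N) \to \DEF_\Phi(W,N)$, and it is the content of the cited \cite[Lem.~6.2]{hallj_openness_coh}, which packages this pushout compatibility at the level of the $\DEF$-categories.

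Passing to isomorphism classes gives the bijection for $\Def$; applying the same reasoning to automorphisms of the trivial extension (viewed as a $Z$-extension with a $Y$-structure) gives the bijection for $\Aut$. The only step that requires genuine verification is the structure-sheaf identity above; everything else is formal from the cited lemma. I do not anticipate a serious obstacle, as the $\HCL$-homogeneity hypothesis is exactly calibrated to transport the cocartesian square across $\Phi$.
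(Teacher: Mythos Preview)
Your proposal is correct and is essentially the same as the paper's approach: the paper does not give a proof but simply states that the lemma is an easy consequence of \cite[Lem.~6.2]{hallj_openness_coh}, and your argument is precisely an unpacking of that citation via the pushout identification $T\extn{{i}_*N}\cong T\sqcup_W W\extn{N}$ together with $\HCL$-homogeneity of $Y$ and $Z$. One minor remark: your phrasing ``the diagram is cocartesian both in $\SCH{Y}$ and in $\SCH{Z}$'' is slightly imprecise, since the $Y$-structure on $W\extn{N}$ is what is being classified; the cleaner statement is that $\HCL$-homogeneity gives equivalences $\FIB{Y}{T\extn{{i}_*N}}\simeq \FIB{Y}{T}\times_{\FIB{Y}{W}}\FIB{Y}{W\extn{N}}$ and likewise for $Z$, from which the equivalence of $\DEF$-categories follows by comparing fibers over the trivial $Z$-extension.
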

We recall the exact sequence of \opcit[, Prop.\ 8.5], which is our
fundamental computational tool. 
\begin{prop}\label{prop:derdefseq}
  Let $S$ be a scheme and let
  $\Phi \colon Y \to Z$ be a $1$-morphism of $\HNIL$-homogeneous
  $S$-groupoids. Let $T$ be a $Y$-scheme and let $J$ be a quasi-coherent
  $\Orb_T$-module. Then 
  there is a natural $6$-term exact sequence of abelian groups:
    \[
    \xymatrix{0 \ar[r] & \Aut_{\Phi}(T,J) \ar[r] &
    \Der_{Y}(T,J) \ar[r] &
    \Der_{Z}(T,J) \ar `[r] `[l] `[dlll] `[d] [dll] &  &\\
    & \Def_{\Phi}(T,J) \ar[r] & \Exal_{Y}(T,J)
    \ar[r] & \Exal_{Z}(T,J).}
  \]
\end{prop}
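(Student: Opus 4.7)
My approach is to realize the $6$-term sequence as the long exact sequence of homotopy invariants associated to a fiber sequence of Picard categories
\begin{equation*}
\DEF_\Phi(T,J) \xrightarrow{F} \EXAL_Y(T,J) \xrightarrow{G} \EXAL_Z(T,J),
\end{equation*}
where $F$ sends a $Y$-structure on the trivial $Z$-extension $T\hookrightarrow T\extn{J}$ to the resulting $Y$-extension, and $G$ is postcomposition with $\Phi$. By construction $G\circ F$ factors (canonically, up to $2$-isomorphism) through the trivial $Z$-extension, and unwinding the definition of $\DEF_\Phi(T,J)$ shows that it is literally the strict $2$-fiber of $G$ over the basepoint $T\extn{J}\in\EXAL_Z(T,J)$.

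With this identification, the proof divides into three steps. First, $\HNIL$-homogeneity of $Y$ and $Z$ makes $\EXAL_Y(T,J)$ and $\EXAL_Z(T,J)$, and hence also the fiber $\DEF_\Phi(T,J)$, into Picard categories with $F$ and $G$ additive, by \cite[Prop.~2.3]{hallj_openness_coh}. Second, verify that the fiber-sequence identification above is compatible with these Picard structures; this is straightforward, since the Picard sum on $\EXAL_Y$ and $\EXAL_Z$ is given by Baer sum, which is preserved by pullback along $\Phi$. Third, apply the standard long exact sequence for a fiber sequence of Picard groupoids: taking $\pi_0 = \text{isomorphism classes}$ and $\pi_1 = \text{automorphism group of the unit}$, one obtains
\begin{equation*}
0 \to \pi_1(\DEF_\Phi) \to \pi_1(\EXAL_Y) \to \pi_1(\EXAL_Z) \to \pi_0(\DEF_\Phi) \to \pi_0(\EXAL_Y) \to \pi_0(\EXAL_Z),
\end{equation*}
and then identify $\pi_1$ with $\Aut_\Phi/\Der_Y/\Der_Z$ and $\pi_0$ with $\Def_\Phi/\Exal_Y/\Exal_Z$ directly from the definitions of these groups.

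The piece requiring genuine care is the connecting homomorphism $\Der_Z(T,J)\to\Def_\Phi(T,J)$: given a $Z$-derivation $\delta$, regarded as a $Z$-automorphism of the trivial $Z$-extension $T\extn{J}$, one transports the trivial $Y$-structure along $\delta$ to obtain a (generally non-trivial) $Y$-structure on $T\extn{J}$. I expect the main technical obstacle to be verifying exactness at the two middle terms: at $\Der_Z$, that two $Z$-derivations yield the same $Y$-structure precisely when they differ by a $Y$-derivation; and at $\Def_\Phi$, that a $Y$-structure on $T\extn{J}$ gives the \emph{trivial} $Y$-extension of $T$ by $J$ precisely when it is obtained by transporting the trivial structure along some $Z$-derivation. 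Both statements reduce, once the Picard structures are in place, to a diagram chase using only $\HNIL$-homogeneity and the Picard category axioms; everything else in the sequence is then formal.
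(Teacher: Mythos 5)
The paper itself contains no proof of this proposition: it is recalled verbatim from \cite[Prop.~8.5]{hallj_openness_coh}, so there is no in-text argument to compare against. Your proposal is nevertheless a correct and self-contained route to the statement. Where the cited reference establishes exactness by direct verification at each of the six terms, using the pushout constructions supplied by $\HNIL$-homogeneity, you repackage exactly those verifications as the homotopy exact sequence of a fiber sequence of Picard groupoids $\DEF_\Phi(T,J)\to\EXAL_Y(T,J)\to\EXAL_Z(T,J)$. This buys a cleaner global structure (all six exactness statements and the additivity of all five maps, including the connecting map, come from one standard lemma about Picard groupoids), at the cost of having to set up the fiber-sequence formalism; the underlying diagram chases are the same. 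Two small points to tighten. First, $\DEF_\Phi(T,J)$ is literally the \emph{strict} fiber of $G$ over $T\extn{J}$, but the long exact sequence is attached to the \emph{homotopy} $2$-fiber, whose objects are pairs $(\xi,\alpha)$ with $\alpha\colon G(\xi)\cong T\extn{J}$; you should note that the comparison functor is an equivalence because any isomorphism of $Z$-extensions $T'\cong T\extn{J}$ (being the identity on $T$ and on $J$) lets one transport the $Y$-structure of $T'$ onto $T\extn{J}$, producing an object of $\DEF_\Phi(T,J)$. Second, the phrase ``preserved by pullback along $\Phi$'' should read ``preserved by composition with $\Phi$'': $G$ is the forgetful functor sending a $Y$-extension to its underlying $Z$-extension, and it visibly commutes with the Baer-sum construction (which itself is the place where $\HNIL$-homogeneity of $Y$ and $Z$ is used, via \cite[Prop.~2.3]{hallj_openness_coh}). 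With these clarifications the argument is complete.
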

We now define
${\Obs}_\Phi(T,J)=\coker\bigl(\Exal_{Y}(T,J)\to\Exal_{Z}(T,J)\bigr)$ so that we
obtain an $\Gamma(T,\Orb_T)$-linear functor
\[
\Obs_\Phi(T,-) \colon \QCOH{T} \to \AB,\quad J \mapsto \Obs_\Phi(T,J).
\]
This is the \emph{minimal obstruction theory} of $\Phi$ in the sense of
Section~\ref{sec:obs-theories}.

Recall that if $\Phi$ is $\HrCL$-homogeneous, then $\Aut_\Phi(T,-)$ and $\Def_\Phi(T,-)$
are half-exact~\cite[Cor.\ 6.4]{hallj_openness_coh}. There is no
reason to expect that $\Obs_\Phi(T,-)$ is half-exact, however.
We have the following analogue of Lemma~\ref{lem:def_prop} for obstructions.
\begin{lem}\label{lem:obs_prop}
  Let $S$ be a scheme $S$ and let  
  $\Phi \colon Y \to Z$ be a $1$-morphism of $\HCL$-homogeneous
  $S$-groupoids.  Let ${i} \colon W 
  \hookrightarrow T$ be a closed immersion of $Y$-schemes and let $N$
  be a quasi-coherent $\Orb_{W}$-module. Then there
  is a natural map $\Obs_\Phi(W,N)\to \Obs_\Phi(T,{i}_*N)$,
  which is injective and functorial in $N$.

  Moreover, if $T$ is noetherian and $\Obs_\Phi(T,{i}_*N)$ is
  finitely generated, then there exists an infinitesimal neighborhood $W_n$ of
  $W$ in $T$, i.e., a factorization of ${i}$ through a locally nilpotent
  closed immersion $j\colon W\hookrightarrow W_n$, such that
  \[
  \Obs_\Phi(W_n,j_*N)\to \Obs_\Phi(T,{i}_*N)
  \]
  is an isomorphism.
\end{lem}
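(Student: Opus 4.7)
The plan is to define the natural map via the pushout construction afforded by $\HCL$-homogeneity, prove injectivity by restricting $Y$-lifts along a closed immersion of the pushout, and handle the stabilization via an Artin--Rees argument on infinitesimal neighborhoods of $W$ in $T$.

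For the first part, given a $Z$-extension $W \hookrightarrow W'$ of $W$ by $N$, Lemma~\ref{lem:homog_pushouts_int} produces a pushout $T_{\#} := T \sqcup_W W'$ that is a $Z$-extension of $T$ by $i_*N$ in which $W' \hookrightarrow T_{\#}$ is a closed immersion. The analogous construction on $Y$-extensions is compatible with $\Phi$, so taking cokernels of $\Exal_Y \to \Exal_Z$ yields the desired natural map $\Obs_\Phi(W,N) \to \Obs_\Phi(T,i_*N)$, manifestly functorial in $N$. For injectivity, suppose $[W'] \in \Obs_\Phi(W,N)$ maps to $0$. Then the $Z$-extension $T \hookrightarrow T_{\#}$ is isomorphic, as a $Z$-extension, to some $Y$-extension $T \hookrightarrow \widetilde{T}$, and this isomorphism transports a $Y$-structure onto $T_{\#}$ lifting its $Z$-structure and extending the given $Y$-structure on $T$. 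Restricting this $Y$-structure along the closed immersion $W' \hookrightarrow T_{\#}$ endows $W'$ with a $Y$-structure lifting its $Z$-structure, and compatibility with the $Y$-structure on $W$ is forced by the pushout property. Thus $W \hookrightarrow W'$ is already a $Y$-extension, so $[W'] = 0$.

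For the stabilization, reduce to the affine case: write $T = \spec A$, let $I \subset A$ be the ideal of $W$, and set $W_n := V(I^{n+1})$. Pick generators $\omega_1,\dots,\omega_k$ of $\Obs_\Phi(T,i_*N)$ and lift each $\omega_\ell$ to $\alpha_\ell \in \Exal_Z(T,i_*N)$, represented by $T \hookrightarrow T'_\ell = \spec A'_\ell$. Let $M := i_*N \subset A'_\ell$ and $K_\ell \subset A'_\ell$ be the ideal of $W$ in $T'_\ell$, so $M^2 = 0$, $K_\ell \supset M$ with $K_\ell/M = I$, and $K_\ell \cdot M = I \cdot M = 0$. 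Assuming $A'_\ell$ is noetherian (which holds when $N$ is finitely generated), Artin--Rees applied to $M \subset A'_\ell$ with the ideal $K_\ell$ yields $c_\ell$ such that $K_\ell^n \cap M \subset K_\ell^{n - c_\ell} \cdot M = 0$ for $n > c_\ell$. Set $n := \max_\ell c_\ell$ and $W'_{\ell,n} := V(K_\ell^{n+1}) \subset T'_\ell$. Then $W_n \hookrightarrow W'_{\ell,n}$ is a square-zero extension with kernel $(M + K_\ell^{n+1})/K_\ell^{n+1} = M/(M \cap K_\ell^{n+1}) \cong j_{n,*}N$, and the natural map $A'_\ell \to A \times_{A/I^{n+1}} A'_\ell/K_\ell^{n+1}$ is an isomorphism: injective because $M \cap K_\ell^{n+1} = 0$, and surjective by lifting through $K_\ell^{n+1} \twoheadrightarrow I^{n+1}$. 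This identifies $T'_\ell$ with the pushout $T \sqcup_{W_n} W'_{\ell,n}$, so each $\alpha_\ell$ lies in the image of $\Exal_Z(W_n, j_{n,*}N) \to \Exal_Z(T, i_*N)$; hence each $\omega_\ell$ lies in the image of $\Obs_\Phi(W_n, j_{n,*}N) \to \Obs_\Phi(T, i_*N)$. Combined with injectivity, this map is an isomorphism.

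The main obstacle is the Artin--Rees step, which requires $A'_\ell$ to be noetherian. This is automatic when $N$ is finitely generated (as in all intended applications of the lemma) and otherwise requires a preliminary reduction to a finitely generated submodule of $N$ supporting the finitely many classes $\alpha_\ell$.
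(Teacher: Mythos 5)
Your proof is correct and follows essentially the same route as the paper's: the natural map is obtained by pushing out extensions along $i$ (with injectivity coming from $\HCL$-homogeneity of $\Phi$), and the stabilization statement is proved by applying Artin--Rees to the ideal of $W$ inside a $Z$-extension $T'_\ell$ realizing each generator of $\Obs_\Phi(T,{i}_*N)$, then recognizing $T'_\ell$ as a pushout from an infinitesimal neighborhood. The noetherian caveat you flag for $A'_\ell$ is equally implicit in the paper's own appeal to Artin--Rees, so it is not a point of divergence.
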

\begin{proof}
Note that for $\Def$ and $\Aut$ there are always, without any homogeneity,
natural maps $\Aut_\Phi(T,{i}_*N) \to \Aut_\Phi(W,N)$ and
$\Def_\Phi(T,{i}_*N) \to \Def_\Phi(W,N)$ and $\HCL$-homogeneity equips these
with natural inverses. For $\Obs$ there is no natural map
$\Obs_\Phi(T,{i}_*N)\to \Obs_\Phi(W,N)$, but if $Y$ and $Z$ are
$\HCL$-homogeneous, we have natural maps for $\Exal_Y$ and $\Exal_Z$ in the
opposite direction and thus a natural map for $\Obs_\Phi$ as stated in the
lemma. That this map is injective follows immediately from the $\HCL$-homogeneity
of $\Phi$.

Now, given an obstruction $\omega\in \Obs_\Phi(T,{i}_*N)$, we can realize it
as a $Z$-extension $k\colon T\hookrightarrow T'$ of $T$ by ${i}_*N$. The
ideal sheaf $k_*{i}_*N\subset \Orb_{T'}$ is then annihilated by the ideal sheaf
$I$ defining the closed immersion $k\circ {i}\colon W\hookrightarrow T'$. Thus,
by the Artin--Rees lemma, we have that $(k_*{i}_*N) \cap I^n=0$ for some $n$. Let
$W_1'$ and $W_1$ be the closed subschemes of $T'$ defined by $I^n$ and
$I^n+k_*{i}_*N$. Then the morphisms in the diagram:
\[
\xymatrix@-0.2pc{ W \ar@{(->}[r]^{j_1} & W_1 \ar@{(->}[r] \ar@{(x->}[d]
  & T \ar@{(x->}[d]  \\ & W_1' \ar@{(->}[r] & T'}
\]
are closed immersions and the square is cartesian and cocartesian
in the
category of $Z$-schemes (because $Z$ is $\HCL$-homogeneous). Let
$\omega_1=[W_1\hookrightarrow W_1']\in \Obs_\Phi(W_1,(j_1)_*N)$ be the obstruction,
so that $\omega$ is the image of $\omega_1$ along the natural map given by the
first part.

We have thus shown that every element $\omega\in \Obs_\Phi(T,{i}_*N)$ is in
the image of $\Obs_\Phi(W_l,(j_l)_*N)$ for some infinitesimal neighborhood
$j_l\colon W\hookrightarrow W_l$, depending on $\omega$. Since
$\Obs_\Phi(T,{i}_*N)$ is finitely generated and $\Orb_T$ is noetherian, it
follows that there exists an infinitesimal neighborhood $j\colon
W\hookrightarrow W_n$ such that
$\Obs_\Phi(W_n,j_*N)\to \Obs_\Phi(T,{i}_*N)$
is an isomorphism.
\end{proof}
\section{Relative conditions}\label{sec:rel_conds}
Let $S$ be a locally noetherian scheme. In this section we introduce a number of
conditions
for a $1$-morphism of $\HNIL$-homogeneous $S$-groupoids $\Phi \colon Y \to
Z$. These are the relative versions of the conditions that
appear in \itemref{mainthm:item:bdd}, \itemref{mainthm:item:cons} and
\itemref{mainthm:item:Zar_loc} of the Main Theorem. 
For any of the conditions given in this
section, a $\HNIL$-homogeneous $S$-groupoid $X$ is said to have that
condition, if the structure $1$-morphism $X \to \SCH{S}$ has the
condition. These conditions are provided in the relative version so
that this paper can be
more readily seen to subsume the results of 
\cite{starr-2006}.
That these conditions are stable under composition follows from the exact
sequence of~\cite[Prop.\ 6.9]{hallj_openness_coh} and
Lemma~\ref{lem:cons_inj_surj}. Moreover, we can also bootstrap the diagonal
using~\cite[Prop.\ 6.9]{hallj_openness_coh}---the conditions for $\Aut_{X/S}$
and $\Def_{X/S}$ imply the corresponding conditions for $\Def_{\Delta_{X/S}}$
and $\Obs_{\Delta_{X/S}}$.
\begin{boxcnd}[Boundedness of automorphisms, deformations and obstructions]
For every affine $Y$-scheme $T$ that is locally of finite type over $S$ and
every integral closed subscheme ${i}\colon T_0\hookrightarrow T$,
{\renewcommand{\theenumi}{\thethm(\roman{enumi})}
\renewcommand{\labelenumi}{(\roman{enumi})}
\begin{enumerate}
\item the $\Gamma(\Orb_{T_0})$-module 
  $\Aut_\Phi(T_0,\Orb_{T_0})$ is coherent;\label{cnd:bdd_aut}
\item the $\Gamma(\Orb_{T_0})$-module 
  $\Def_\Phi(T_0,\Orb_{T_0})$ is coherent; and\label{cnd:bdd_def}
\item the $\Gamma(\Orb_{T_0})$-module 
  $\Obs_\Phi(T,{i}_*\Orb_{T_0})$ is coherent.\label{cnd:bdd_obs}
\end{enumerate}}
\end{boxcnd}
We note that Condition \ref{cnd:bdd_obs} often is satisfied for trivial reasons.
If, for example, the $S$-groupoid $Z$ satisfies Condition \ref{cnd:bdd_exal},
which is the case when $Z$ is algebraic, then $\Phi$ satisfies Condition
\ref{cnd:bdd_obs}.
\begin{lem}\label{lem:bdd}
Let $S$ be a locally
noetherian scheme  and let $\Phi\colon Y\to Z$ be a $1$-morphism of $\HrCL$-homogeneous $S$-groupoids
 satisfying Condition \ref{cnd:bdd_def}. If $Z$ satisfies
Condition \ref{cnd:bdd_exal} (boundedness of extensions), then so does $Y$.
\end{lem}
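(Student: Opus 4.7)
The plan is to reduce to the weakly bounded version via Lemma~\ref{lem:cons_inj_surj}\itemref{lem:cons_inj_surj:item:bdd_he} and then sandwich $\Exal_Y(T,-)$ between $\Def_\Phi$ and $\Exal_Z$ using the six-term exact sequence of Proposition~\ref{prop:derdefseq}.

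First I would observe that since $Y$ is $\HrCL$-homogeneous, it is in particular $\HrNIL$-homogeneous, so by Lemma~\ref{lem:der_exal_props_record}\itemref{lem:der_exal_props_record:item:he} the functor $M \mapsto \Exal_Y(T,M)$ is half-exact for any $Y$-scheme $T$. By Lemma~\ref{lem:cons_inj_surj}\itemref{lem:cons_inj_surj:item:bdd_he}, it therefore suffices to show that $\Exal_Y(T,-)$ is \emph{weakly} bounded, i.e., that for every integral closed subscheme ${i} \colon T_0 \hookrightarrow T$ of an affine $Y$-scheme $T$ of finite type over $S$, the $\Gamma(\Orb_{T_0})$-module $\Exal_Y(T,{i}_*\Orb_{T_0})$ is coherent. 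Note that $T$ is noetherian since $S$ is locally noetherian and $T$ is affine of finite type over $S$, so $\Gamma(\Orb_{T_0})$ is noetherian.

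Next I would apply Proposition~\ref{prop:derdefseq} with $J={i}_*\Orb_{T_0}$, extracting the exact sequence
\[
\Def_\Phi(T,{i}_*\Orb_{T_0}) \longrightarrow \Exal_Y(T,{i}_*\Orb_{T_0}) \longrightarrow \Exal_Z(T,{i}_*\Orb_{T_0})
\]
of $\Gamma(\Orb_{T_0})$-modules. On the left, Lemma~\ref{lem:def_prop} (applicable because $Y$ and $Z$ are $\HCL$-homogeneous, as $\HCL \subset \HrCL$) gives a natural isomorphism
\[
\Def_\Phi(T,{i}_*\Orb_{T_0}) \cong \Def_\Phi(T_0,\Orb_{T_0}),
\]
and the right-hand side is coherent by hypothesis~\ref{cnd:bdd_def}. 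On the right, the hypothesis that $Z$ satisfies Condition~\ref{cnd:bdd_exal}, together with Lemma~\ref{lem:cons_inj_surj}\itemref{lem:cons_inj_surj:item:bdd_he} applied to $Z$, shows that $\Exal_Z(T,{i}_*\Orb_{T_0})$ is a coherent $\Gamma(\Orb_{T_0})$-module.

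Finally, the middle term $\Exal_Y(T,{i}_*\Orb_{T_0})$ is an extension of the image of the second map (a submodule of the coherent module $\Exal_Z(T,{i}_*\Orb_{T_0})$, hence coherent since $\Gamma(\Orb_{T_0})$ is noetherian) by the image of the first map (a quotient of the coherent module $\Def_\Phi(T_0,\Orb_{T_0})$, hence coherent). Consequently $\Exal_Y(T,{i}_*\Orb_{T_0})$ is coherent, which establishes weak boundedness and completes the proof. There is no real obstacle here; the only point that requires a moment's care is matching up the $\Gamma(\Orb_{T_0})$-module structures coming from the identification in Lemma~\ref{lem:def_prop} with those arising on the $\Exal$ terms via the natural $\Gamma(\Orb_T)$-action factoring through $\Gamma(\Orb_{T_0})$ (since the coefficient sheaf ${i}_*\Orb_{T_0}$ is annihilated by the ideal of $T_0$).
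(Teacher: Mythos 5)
Your proof is correct and follows essentially the same route as the paper's: reduce to weak boundedness via half-exactness and Lemma~\ref{lem:cons_inj_surj}\itemref{lem:cons_inj_surj:item:bdd_he}, then sandwich $\Exal_Y(T,{i}_*\Orb_{T_0})$ in the exact sequence of Proposition~\ref{prop:derdefseq} using Lemma~\ref{lem:def_prop} and the two boundedness hypotheses. The only cosmetic difference is that for the right-hand term the paper invokes Condition~\ref{cnd:bdd_exal} directly (boundedness already gives finite generation on the coherent module ${i}_*\Orb_{T_0}$), so your extra appeal to the weakly bounded reformulation for $Z$ is harmless but unnecessary.
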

\begin{proof}
  Let $T=\spec(R)$ be an affine $Y$-scheme that is locally of finite type over $S$. By
  Lemma
  \ref{lem:der_exal_props_record}\itemref{lem:der_exal_props_record:item:he}
  the functor $M\mapsto \Exal_Y(T,M)$ is half-exact. Thus, by Lemma
 \ref{lem:cons_inj_surj}\itemref{lem:cons_inj_surj:item:bdd_he}, it is
 sufficient to prove that for any 
  integral closed subscheme ${i} \colon T_0 \hookrightarrow T$ the
  $R$-module $\Exal_Y(T,{i}_*\Orb_{T_0})$ is coherent. Now, by
  Proposition \ref{prop:derdefseq}, there is an exact sequence:  
  \[
  \xymatrix{\Def_{\Phi}(T,{i}_*\Orb_{T_0}) \ar[r] &
    \Exal_Y(T,{i}_*\Orb_{T_0}) \ar[r] &
    \Exal_Z(T,{i}_*\Orb_{T_0}).}  
  \]
  By Condition \ref{cnd:bdd_exal} the $R$-module
  $\Exal_Z(T,{i}_*\Orb_{T_0})$ is coherent. By Lemma \ref{lem:def_prop}
  we also have that $\Def_{\Phi}(T,{i}_*\Orb_{T_0}) \cong
  \Def_{\Phi}(T_0,\Orb_{T_0})$, which is a coherent
  $\Gamma(\Orb_{T_0})$-module by Condition
  \ref{cnd:bdd_def}. It now follows from the exact sequence that
  $\Exal_Y(T,{i}_*\Orb_{T_0})$ is a coherent $R$-module.
\end{proof}
Similarly, to expand Condition \ref{cnd:cons_exal} (constructibility of
extensions), we introduce the following conditions.

\begin{boxcnd}[Constructibility of automorphisms, deformations and obstructions]
\label{cnd:cons_aut+def+obs}
For every affine and irreducible
  $Y$-scheme $T$ that is locally of finite type over $S$, with reduction
  ${i}\colon T_0\hookrightarrow T$,
{\renewcommand{\theenumi}{\thethm(\roman{enumi})}
\renewcommand{\labelenumi}{(\roman{enumi})}
\begin{enumerate}
\item the functor $\Aut_\Phi(T_0,-)\colon \QCOH{T_0}\to \AB$ is
  \GB;\label{cnd:cons_aut}
\item the functor $\Def_\Phi(T_0,-)\colon \QCOH{T_0}\to \AB$ is
  \GB; and\label{cnd:cons_def}
\item the functor $\Obs_\Phi(T,{i}_*-)\colon \QCOH{T_0}\to \AB$ is
  \GI.\label{cnd:cons_obs}
\end{enumerate}}
\end{boxcnd}
\begin{lem}\label{lem:cmb_cons}
  Let $S$ be a locally noetherian scheme. Let $\Phi \colon Y \to Z$ be a $1$-morphism of $\HCL$-homogeneous
  $S$-groupoids satisfying Conditions \ref{cnd:bdd_obs},
  \ref{cnd:cons_def} and \ref{cnd:cons_obs}. If $Z$ satisfies Condition
  \ref{cnd:cons_exal} (constructibility of extensions), then so does $Y$.
\end{lem}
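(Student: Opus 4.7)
The plan is to reduce the claim, exactly as in the proof of Lemma~\ref{lem:bdd}, to a four-term exact sequence coming from Proposition~\ref{prop:derdefseq} and then apply the variant of Lemma~\ref{lem:cons_inj_surj}\itemref{lem:cons_inj_surj:item:4surj} for reduced schemes. Fix an affine $Y$-scheme $T$, locally of finite type over $S$, and an integral closed subscheme ${i}\colon T_0\hookrightarrow T$. Since $T_0$ is reduced, showing that $\Exal_Y(T,-)$ is \CS amounts to showing that the functor $N\mapsto \Exal_Y(T,{i}_*N)$ on $\QCOH{T_0}$ is \GS.

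From Proposition~\ref{prop:derdefseq} and the definition of $\Obs_\Phi$ as a cokernel, I obtain the exact sequence of additive functors in $N\in \QCOH{T_0}$
\[
\Def_\Phi(T,{i}_*N)\longrightarrow \Exal_Y(T,{i}_*N)\longrightarrow \Exal_Z(T,{i}_*N)\longrightarrow \Obs_\Phi(T,{i}_*N).
\]
Using the $\HCL$-homogeneity of $\Phi$, Lemma~\ref{lem:def_prop} identifies the leftmost term with $\Def_\Phi(T_0,N)$. I then apply the reduced-scheme version of Lemma~\ref{lem:cons_inj_surj}\itemref{lem:cons_inj_surj:item:4surj} to this four-term exact sequence.

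The hypotheses of that lemma match our Conditions one-for-one. The leftmost term $\Def_\Phi(T_0,-)$ is \GS by Condition~\ref{cnd:cons_def} (which actually asserts the stronger property \GB). The term $\Exal_Z(T,{i}_*-)$ is \GS because the hypothesis that $Z$ satisfies Condition~\ref{cnd:cons_exal} says exactly that $\Exal_Z(T,-)$ is \CS, and \GS on the integral subscheme $T_0$ is this instance. The rightmost term $\Obs_\Phi(T,{i}_*-)$ is \GI by Condition~\ref{cnd:cons_obs}, and it is weakly bounded on $\QCOH{T_0}$ by Condition~\ref{cnd:bdd_obs}: for any integral closed subscheme $j\colon T_1\hookrightarrow T_0$, the composition ${i}\circ j\colon T_1\hookrightarrow T$ is an integral closed subscheme of $T$, so Condition~\ref{cnd:bdd_obs} directly delivers the coherence of $\Obs_\Phi(T,({i}\circ j)_*\Orb_{T_1})$ over $\Gamma(\Orb_{T_1})$.

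There is no real obstacle here; the argument is a formal assembly once one recognizes that the four-term sequence obtained by truncating Proposition~\ref{prop:derdefseq} and applying Lemma~\ref{lem:def_prop} has precisely the shape needed by Lemma~\ref{lem:cons_inj_surj}\itemref{lem:cons_inj_surj:item:4surj}. The only bookkeeping point is that the \GS/\GI amplification of that lemma is available because $T_0$ is integral and hence reduced, so no additional reduction step is required; and one should note that the half-exactness of the various $\Exal$-functors, while available from $\HrCL$-homogeneity, is not needed in this argument, which uses only the exactness of the four-term sequence itself.
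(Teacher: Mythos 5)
Your overall architecture is the same as the paper's: truncate the six-term sequence of Proposition~\ref{prop:derdefseq} to the four-term exact sequence
\[
\Def_\Phi(T,{i}_*-)\to \Exal_Y(T,{i}_*-)\to \Exal_Z(T,{i}_*-)\to \Obs_\Phi(T,{i}_*-),
\]
identify the first term with $\Def_\Phi(T_0,-)$ via Lemma~\ref{lem:def_prop}, and conclude with Lemma~\ref{lem:cons_inj_surj}\itemref{lem:cons_inj_surj:item:4surj}. The treatment of $H_1$ and $H_3$ and the weak boundedness of $H_4$ are correct as written.

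There is, however, a genuine gap in the one sentence ``the rightmost term $\Obs_\Phi(T,{i}_*-)$ is \GI by Condition~\ref{cnd:cons_obs}.'' Condition~\ref{cnd:cons_obs} is stated only for an affine \emph{irreducible} $Y$-scheme $T$ with $T_0=\red{T}$ its \emph{reduction}; in your situation $T$ is an arbitrary affine $Y$-scheme and ${i}\colon T_0\hookrightarrow T$ an arbitrary integral closed subscheme, so the condition does not apply directly. The point is that, unlike $\Def$ and $\Aut$, obstructions do not restrict along closed immersions: Lemma~\ref{lem:obs_prop} only provides an \emph{injection} $\Obs_\Phi(W,N)\to\Obs_\Phi(T,{i}_*N)$ in the wrong direction, not an isomorphism analogous to Lemma~\ref{lem:def_prop}. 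To bridge this, one must use Condition~\ref{cnd:bdd_obs} a second time: it guarantees that $\Obs_\Phi(T,{i}_*\Orb_{T_0})$ is finitely generated, so by the second part of Lemma~\ref{lem:obs_prop} there is an infinitesimal neighborhood $j\colon T_0\hookrightarrow W_n$ inside $T$ with $\Obs_\Phi(W_n,j_*\Orb_{T_0})\cong\Obs_\Phi(T,{i}_*\Orb_{T_0})$ and with $\Obs_\Phi(W_n,j_*\kappa(t))\hookrightarrow\Obs_\Phi(T,{i}_*\kappa(t))$ for all $t\in|T_0|$. Now $W_n$ \emph{is} irreducible with reduction $T_0$, so Condition~\ref{cnd:cons_obs} applies to it, and the two displayed maps let you transfer the \GI property from $\Obs_\Phi(W_n,j_*-)$ to $\Obs_\Phi(T,{i}_*-)$. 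In particular, Condition~\ref{cnd:bdd_obs} is not merely supplying weak boundedness, as you claim; it is what makes the constructibility hypothesis on obstructions usable at all for non-irreducible $T$ and for integral subschemes other than the reduction.
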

\begin{proof}
  Let $T$ be an affine $Y$-scheme that is locally of finite over $S$. By Proposition \ref{prop:derdefseq} there is an
  exact sequence of additive functors $\QCOH{T} \to \AB$:
  \[
  \xymatrix{\Def_{\Phi}(T,-) \ar[r] & \Exal_Y(T,-) \ar[r] & \Exal_Z(T,-) \ar[r] & \Obs_{\Phi}(T,-) \ar[r] & 0.}
  \]
  Let  ${i} \colon T_0 \hookrightarrow T$ be an integral closed subscheme. By Lemma
  \ref{lem:def_prop} we have
  that $\Def_\Phi(T_0,-) = \Def_\Phi(T,{i}_*(-))$. Condition \ref{cnd:cons_def} gives that
  $\Def_\Phi(T_0,-)$ is \GS, so the functor $\Def_\Phi(T,-)$ is \CS.
  Condition~\ref{cnd:cons_exal} says that $\Exal_Z(T,-)$ is \CS.
  The remaining two conditions together with Lemma~\ref{lem:obs_prop}
  imply that $\Obs_{\Phi}(T,-)$ is \CI and weakly bounded. In fact,
  for any integral subscheme ${i}\colon T_0\hookrightarrow T$, there is an
  infinitesimal neighborhood $j_n\colon T_0\hookrightarrow T_n$ such that
  $\Obs_{\Phi}(T_n,(j_n)_*\Orb_{T_0})\cong \Obs_{\Phi}(T,{i}_*\Orb_{T_0})$
  and $\Obs_{\Phi}(T_n,\kappa(t))\hookrightarrow \Obs_{\Phi}(T,\kappa(t))$
  is injective for all $t\in |T_0|$.  It now follows from Lemma
  \ref{lem:cons_inj_surj}\itemref{lem:cons_inj_surj:item:4surj} that the
  functor $\Exal_Y(T,-)$ is \CS.
\end{proof}
We now move on and address Condition \ref{cnd:Zar_loc_exal} (Zariski
localization of extensions).
\begin{boxcnd}[Zariski localization of automorphisms, deformations and
    obstructions]\label{cnd:Zar_loc_aut+def+obs}
For every affine and irreducible $Y$-scheme $T$ that is locally of finite type over
$S$, with reduction $T_0$, such that the generic point $\eta\in |T|$ is of
finite type, 
{\renewcommand{\theenumi}{\thethm(\roman{enumi})}
\renewcommand{\labelenumi}{(\roman{enumi})}
\begin{enumerate}
\item the natural map
  $\Aut_\Phi(T_0,\kappa(\eta)) \to \Aut_{\Phi}(U_0,\kappa(\eta))$ is
  bijective;\label{cnd:Zar_loc_aut}
\item the natural map
  $\Def_\Phi(T_0,\kappa(\eta)) \to \Def_{\Phi}(U_0,\kappa(\eta))$ is
  bijective; and\label{cnd:Zar_loc_def}
\item the natural map
  $\Obs_\Phi(T,\kappa(\eta)) \to \Obs_{\Phi}(U,\kappa(\eta))$ is
  injective\label{cnd:Zar_loc_obs}
\end{enumerate}
for every non-empty open subset $U\subset T$ with reduction $U_0$.}
\end{boxcnd}
Note that Condition~\ref{cnd:Zar_loc_aut+def+obs} trivially holds when $S$ is
Jacobson since then $U=T=\{\eta\}$.
The proof of the next
result is similar, but easier, than the proof of Lemma
\ref{lem:cmb_cons}, thus is omitted.  
\begin{lem}\label{lem:cmb_Zar_loc}
  Let $S$ be a locally noetherian scheme. Let $\Phi \colon Y\to Z$ be a
  $1$-morphism of $\HCL$-homogeneous $S$-groupoids satisfying
  Conditions \ref{cnd:bdd_obs}, \ref{cnd:Zar_loc_def} and
  \ref{cnd:Zar_loc_obs}. If $Z$ satisfies Condition
  \ref{cnd:Zar_loc_exal} (Zariski localization of extensions), then so
  does $Y$.
\end{lem}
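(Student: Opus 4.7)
The plan is to mimic the proof of Lemma \ref{lem:cmb_cons}, applying a four-lemma to the natural ladder coming from Proposition \ref{prop:derdefseq}. Fix an open immersion $p\colon U \hookrightarrow T$ of affine $Y$-schemes locally of finite type over $S$ and a point $u \in |U|$ of finite type; we must show that $\Exal_Y(T,\kappa(u)) \to \Exal_Y(U,\kappa(u))$ is surjective. Let $i\colon T_1 \hookrightarrow T$ denote the closure $\overline{\{u\}}$ equipped with its reduced structure, so $T_1$ is affine, integral, locally of finite type over $S$, with generic point $u$ of finite type, and set $U_1 = U \cap T_1$, which is open in $T_1$ with the same generic point.

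Applying Proposition \ref{prop:derdefseq} on $T$ and on $U$ with $J=\kappa(u)$ produces a commutative ladder with exact rows involving $\Def_\Phi$, $\Exal_Y$, $\Exal_Z$, and $\Obs_\Phi$. The surjectivity four-lemma reduces the required surjectivity of the $\Exal_Y$-column to three statements: surjectivity on $\Def_\Phi$, surjectivity on $\Exal_Z$, and injectivity on $\Obs_\Phi$. The first follows by combining Lemma \ref{lem:def_prop}, which identifies $\Def_\Phi(T,\kappa(u)) = \Def_\Phi(T_1,\kappa(u))$ and similarly for $U$, with Condition \ref{cnd:Zar_loc_def} applied to $T_1 \supset U_1$. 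The second is precisely the hypothesis that $Z$ satisfies Condition \ref{cnd:Zar_loc_exal}.

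For the $\Obs_\Phi$ injectivity, one repeats the $\Obs$-step of the proof of Lemma \ref{lem:cmb_cons}. Given $\omega \in \ker\bigl(\Obs_\Phi(T,\kappa(u)) \to \Obs_\Phi(U,\kappa(u))\bigr)$, the construction in the proof of Lemma \ref{lem:obs_prop}, combined with the finite generation supplied by Condition \ref{cnd:bdd_obs}, produces an infinitesimal neighborhood $j_n\colon T_1 \hookrightarrow T_1^{(n)}$ inside $T$ and an element $\omega_n \in \Obs_\Phi(T_1^{(n)},\kappa(u))$ mapping to $\omega$. Setting $U_1^{(n)} = T_1^{(n)} \cap U$, the scheme $T_1^{(n)}$ is irreducible with reduction $T_1$ and generic point $u$ of finite type, so Condition \ref{cnd:Zar_loc_obs} applied to $T_1^{(n)} \supset U_1^{(n)}$ makes the top arrow of the square
\[
\xymatrix@-0.8pc{\Obs_\Phi(T_1^{(n)},\kappa(u)) \ar[r] \ar@{(->}[d] & \Obs_\Phi(U_1^{(n)},\kappa(u)) \ar@{(->}[d]\\ \Obs_\Phi(T,\kappa(u)) \ar[r] & \Obs_\Phi(U,\kappa(u))}
\]
injective, while the vertical arrows are injective by the first part of Lemma \ref{lem:obs_prop}. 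A brief diagram chase then forces $\omega_n = 0$, hence $\omega = 0$. The only (mild) obstacle is the bookkeeping around this passage to infinitesimal thickenings, which is exactly where Condition \ref{cnd:bdd_obs} enters; once that reduction is in place, the four-lemma and the three Zariski-localization hypotheses deliver the conclusion.
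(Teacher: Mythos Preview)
Your proof is correct and follows exactly the approach the paper intends: the paper omits the proof, stating only that it is ``similar, but easier, than the proof of Lemma~\ref{lem:cmb_cons}'', and your argument is precisely the natural adaptation of that proof---the ladder from Proposition~\ref{prop:derdefseq}, the four-lemma, Lemma~\ref{lem:def_prop} for the $\Def$-term, and the passage to an infinitesimal neighbourhood via Lemma~\ref{lem:obs_prop} for the $\Obs$-term. One small remark: for lifting a \emph{single} element $\omega$ to some $T_1^{(n)}$ you only need the element-wise part of the proof of Lemma~\ref{lem:obs_prop}, which does not require finite generation, so Condition~\ref{cnd:bdd_obs} is not actually used in your argument as written (though it is harmless to invoke it).
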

\section{Obstruction theories}\label{sec:obs-theories}
  As in the previous section, we let $S$ be a locally noetherian scheme and let
  $\Phi \colon Y \to Z$ be a $1$-morphism of $\HNIL$-homogeneous $S$-groupoids. We
  will expand the conditions on obstructions and obtain conditions that are
  more readily verifiable.  We begin with recalling the definition of an
  $n$-step relative obstruction theory given in
  \cite[Defn.\ 6.6]{hallj_openness_coh}.

  An \fndefn{$n$-step relative obstruction
    theory for $\Phi$}, denoted 
  $\{\mathrm{o}^l(-,-), \mathrm{O}^l(-,-)\}_{l=1}^n$, is for
  each $Y$-scheme $T$, a
  sequence of additive functors (the  obstruction spaces): 
  \[
  \mathrm{O}^{l}(T,-) \colon \QCOH{T} \to \AB,\quad J \mapsto
  \mathrm{O}^{l}(T,J),\quad {l}=1,\dots,n
  \]
  as well as natural transformations of functors (the obstruction maps):
  \begin{align*}
    \mathrm{o}^1(T,-) &\colon \Exal_Z(T,-) \Rightarrow
    \mathrm{O}^1(T,-)  \\ 
    \mathrm{o}^{l}(T,-) &\colon \ker \mathrm{o}^{{l}-1}(T,-)
    \Rightarrow \mathrm{O}^{l}(T,-) \quad\mbox{for ${l}=2$, $\dots$, $n$},
  \end{align*}  
  such that the natural transformation of functors:
  \[
  \Exal_Y(T,-) \Rightarrow \Exal_Z(T,-)
  \]
  has image $\ker \mathrm{o}^n(T,-)$. Furthermore, we say that the obstruction
  theory is
\begin{itemize}
\item \fndefn{(weakly) bounded}, if for any
  affine $Y$-scheme $T$, locally of finite
  type over $S$, the obstruction spaces $M\mapsto
  \mathrm{O}^{l}(T,M)$ are (weakly) bounded functors;
\item \fndefn{Zariski- (resp.\ \'etale-) functorial} if for any open immersion
  (resp.\ \'etale morphism) of affine 
  $Y$-schemes  $g \colon V \to T$, and ${l}=1$, $\dots$, $n$, there is a natural
  transformation of functors:
  \[
  C_g^{l} \colon \mathrm{O}^{l}(T,g_*(-)) \Rightarrow
  \mathrm{O}^{l}(V,-),
  \]
  which for any quasi-coherent $\Orb_V$-modules $N$, make the
  following diagrams commute:
  \[
  \xymatrix{\Exal_X(T,g_*N) \ar[r] \ar[d] & \mathrm{O}^1(T,g_*N) \ar[d]
    & \ker \mathrm{o}^{{l}-1}(T,g_*N) \ar[r] \ar[d] &
    \mathrm{O}^{l}(T,g_*N) \ar[d] \\ \Exal_X(V,N) \ar[r] &
    \mathrm{O}^1(V,N) & \ker
    \mathrm{o}^{{l}-1}(V,N) \ar[r]  & 
    \mathrm{O}^{l}(V,N).}
  \]
  Here the leftmost map is the map $\psi$ of Lemma~\ref{lem:der_exal_props_record}
  \itemref{lem:der_exal_props_record:item:et}.
  We also require for any open immersion (resp. \'etale morphism) of affine
  schemes $h \colon W \to V$, an isomorphism of functors: 
  \[
  \alpha_{g,h}^{l} \colon C_h^{l}  \circ C_g^{l} \Rightarrow
  C_{gh}^{l}. 
  \]
\end{itemize}

\begin{rem}[Comparison with Artin's obstruction theories]
An obstruction theory in the sense of~\cite[2.6]{MR0399094} is a $1$-step
bounded obstruction theory ``that is functorial in the obvious sense''. We take
this to mean \'etale-functorial in the above sense.
Obstruction theories are usually half-exact and functorial for any
morphism, but $\Exal$ is only contravariantly functorial for \'etale morphisms
so the condition above does not make sense for arbitrary morphisms. On the
other hand, for $\HA$-homogeneous stacks, $\Exal$ is \emph{covariantly}
functorial for any morphism, cf.\ \cite[Proof of Cor.~2.5]{hallj_openness_coh}.
Also note that the minimal obstruction theory $\Obs_\Phi$ is \'etale-functorial.
\end{rem}

We have the following simple

\begin{lem}\label{lem:obs_vs_min_obs}
  Let $S$ be a locally noetherian scheme and let $\Phi \colon Y \to Z$ be a $1$-morphism of $\HNIL$-homogeneous
  $S$-groupoids. Let $\{\mathrm{o}^{l},\mathrm{O}^{l}\}_{{l}=1}^n$
  be an $n$-step relative obstruction theory for $\Phi$. Let
  $\mathrm{\widetilde{O}}^{l}(T,M)\subset \mathrm{O}^{l}(T,M)$ be the image of
  $\mathrm{o}^{{l}}(T,M)$ for ${l}=1,\dots,n$. Then
  $\{\mathrm{o}^{l},\mathrm{\widetilde{O}}^{l}\}_{{l}=1}^n$ is an $n$-step relative
  obstruction theory for $\Phi$. Moreover, let $\Obs^{l}(T,-)=\Exal_Z(T,-)/\ker
  \mathrm{o}^{l}$ and $\Obs^0(T,-)=0$. Then $\Obs^n(T,-)=\Obs_\Phi(T,-)$ and we
  have exact sequences
  \[
    \xymatrix{0 \ar[r] & \mathrm{\widetilde{O}}^{l}(T,-) \ar[r] &
    \Obs^{l}(T,-) \ar[r] & \Obs^{{l}-1}(T,-) \ar[r] & 0}
  \]
  for ${l}=1,2,\dots,n$. In particular, if the obstruction theory is (weakly)
  bounded, then so is the minimal obstruction theory $\Obs_\Phi(T,-)$.
\end{lem}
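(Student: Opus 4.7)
The first task is almost tautological: replacing each $\mathrm{O}^{l}$ by the image subfunctor $\widetilde{\mathrm{O}}^{l} = \operatorname{im}(\mathrm{o}^{l})$ does not change any kernel, so all the data defining the obstruction theory (domains of the maps $\mathrm{o}^{l}$, kernel chain, final identification $\operatorname{im}(\Exal_Y(T,-)\to\Exal_Z(T,-)) = \ker\mathrm{o}^n$) is preserved verbatim. The only point to check is the (weak) boundedness of $\widetilde{\mathrm{O}}^{l}$ when the original theory is (weakly) bounded, which is immediate since $\widetilde{\mathrm{O}}^{l}(T,M)$ is a submodule of $\mathrm{O}^{l}(T,M)$ and a submodule of a coherent module over a noetherian ring is coherent.

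For the identification $\Obs^n(T,-) = \Obs_\Phi(T,-)$, by the defining property of an obstruction theory we have $\ker\mathrm{o}^n(T,-) = \operatorname{im}(\Exal_Y(T,-)\to \Exal_Z(T,-))$, so
\[
\Obs^n(T,-) = \Exal_Z(T,-)/\ker\mathrm{o}^n(T,-) = \coker\bigl(\Exal_Y(T,-)\to \Exal_Z(T,-)\bigr) = \Obs_\Phi(T,-),
\]
exactly as defined in Section~\ref{sec:aut_def_obs}.

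The short exact sequences are obtained from the second isomorphism theorem applied to the chain $\ker\mathrm{o}^{l}(T,-)\subseteq \ker\mathrm{o}^{l-1}(T,-)\subseteq \Exal_Z(T,-)$ (where by convention $\ker\mathrm{o}^{0}=\Exal_Z$). The inclusion induces a surjection $\Obs^{l}(T,-)\twoheadrightarrow\Obs^{l-1}(T,-)$ with kernel $\ker\mathrm{o}^{l-1}(T,-)/\ker\mathrm{o}^{l}(T,-)$, and by the first isomorphism theorem applied to $\mathrm{o}^{l}(T,-)\colon \ker\mathrm{o}^{l-1}(T,-)\to\mathrm{O}^{l}(T,-)$ this quotient is canonically $\widetilde{\mathrm{O}}^{l}(T,-)$.

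The final statement about preservation of (weak) boundedness then follows by induction on $l$ using the short exact sequence: $\Obs^{0}=0$ is trivially bounded, and at each step the middle term $\Obs^{l}(T,M)$ fits as an extension of the bounded terms $\widetilde{\mathrm{O}}^{l}(T,M)$ and $\Obs^{l-1}(T,M)$, with $M$ ranging over finitely generated modules (respectively, of the form ${i}_*\Orb_{T_0}$). Extensions of coherent modules are coherent, so $\Obs^{l}$ remains bounded, and after $n$ steps one obtains boundedness of $\Obs_\Phi = \Obs^n$. The main (and essentially only) conceptual point is thus the recognition that truncating the obstruction target functors to their images turns the $n$-step theory into a filtration of $\Obs_\Phi$ whose graded pieces are the $\widetilde{\mathrm{O}}^{l}$; no further obstacle is anticipated.
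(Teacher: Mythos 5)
Your proposal is correct and fills in exactly the routine verification the paper intends: the paper states this lemma as ``simple'' and omits the proof entirely, and your argument (corestricting $\mathrm{o}^{l}$ to its image, identifying $\Obs^n$ with $\coker(\Exal_Y\to\Exal_Z)$, and applying the isomorphism theorems to the kernel chain $\ker\mathrm{o}^{l}\subseteq\ker\mathrm{o}^{l-1}\subseteq\Exal_Z$) is the standard one. The induction on $l$ for (weak) boundedness via extensions of coherent modules is also the expected conclusion.
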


We now introduce variants of Conditions~\ref{cnd:cons_obs}
and~\ref{cnd:Zar_loc_obs} (constructibility and Zariski localization of
obstructions) in terms of an $n$-step relative obstruction theory.

\begin{boxcnd}[Constructibility of obstructions II]\label{cnd:cons_obs:n-step}
  There exists a weakly bounded $n$-step relative obstruction
  theory for $\Phi$,
  $\{\mathrm{o}^{l}(-,-),\mathrm{O}^{l}(-,-)\}_{{l}=1}^n$, such
  that for every affine irreducible $Y$-scheme $T$ that is locally of finite type over
  $S$, the obstruction spaces
  $\mathrm{O}^{l}(T,-)|_{\red{T}}\colon \QCOH{\red{T}}\to \AB$ are \GI
  for ${l}=1$, $\dots$, $n$.
\end{boxcnd}
\begin{boxcnd}[Zariski localization of obstructions
  II]\label{cnd:Zar_loc_obs:n-step}
  There exists a functorial, $n$-step relative obstruction
  theory for $\Phi$,
  $\{\mathrm{o}^{l}(-,-),\mathrm{O}^{l}(-,-)\}_{{l}=1}^n$, such
  that for every affine irreducible $Y$-scheme $T$ that is locally of finite type over
  $S$ and whose generic point $\eta\in |T|$ is of finite type,
  and every open subscheme $U\subset T$, then the
  canonical maps
  $
  \mathrm{O}^{l}(T,\kappa(\eta))
  \to \mathrm{O}^{l}(U,\kappa(\eta))
  $
  are injective for ${l}=1$, $\dots$, $n$.
\end{boxcnd}

\begin{lem}\label{lem:cons+Zarloc_obs_equiv}
  Let $S$ be a locally noetherian scheme and let $\Phi \colon Y \to Z$
  be a $1$-morphism of 
  $\HNIL$-homogeneous $S$-groupoids.
\begin{enumerate}
\item (Constructibility) $\Phi$ satisfies
  Conditions~\ref{cnd:bdd_obs} and~\ref{cnd:cons_obs} (boundedness and
  constructibility of obstructions) if and only if $\Phi$ satisfies
  \ref{cnd:cons_obs:n-step}.
  \label{lem:cons+Zarloc_obs_equiv:item:cons}
\item (Zariski localization) Conditions
  \ref{cnd:Zar_loc_obs} and \ref{cnd:Zar_loc_obs:n-step} for $\Phi$ are equivalent.
  \label{lem:cons+Zarloc_obs_equiv:item:Zar_loc}
\end{enumerate}
\end{lem}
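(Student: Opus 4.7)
The plan is to use Lemma~\ref{lem:obs_vs_min_obs} as the central bridge, since it relates any $n$-step obstruction theory to the minimal obstruction theory $\Obs_\Phi$ through the short exact sequences
\[
0 \to \widetilde{\mathrm{O}}^l(T,-) \to \Obs^l(T,-) \to \Obs^{l-1}(T,-) \to 0, \quad l=1,\ldots,n,
\]
with $\Obs^0=0$ and $\Obs^n=\Obs_\Phi$. Both $(\Rightarrow)$ implications will be immediate: the minimal obstruction theory is itself a $1$-step, \'etale-functorial (hence Zariski-functorial) obstruction theory, and its weak boundedness is precisely Condition~\ref{cnd:bdd_obs}. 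Thus \ref{cnd:bdd_obs} and \ref{cnd:cons_obs} together yield \ref{cnd:cons_obs:n-step} with $n=1$, and \ref{cnd:Zar_loc_obs} yields \ref{cnd:Zar_loc_obs:n-step} with $n=1$.

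For the $(\Leftarrow)$ direction of \itemref{lem:cons+Zarloc_obs_equiv:item:cons}, I would start from a weakly bounded $n$-step theory satisfying \ref{cnd:cons_obs:n-step}. The last part of Lemma~\ref{lem:obs_vs_min_obs} already delivers weak boundedness of $\Obs_\Phi$, which is Condition~\ref{cnd:bdd_obs}. Now fix an affine irreducible $Y$-scheme $T$ locally of finite type over $S$, with reduction $T_0$. Since each $\Obs^{l-1}$ is a quotient of $\Obs^l$, descending induction from $\Obs^n=\Obs_\Phi$ combined with Lemma~\ref{lem:cons_inj_surj}\itemref{lem:cons_inj_surj:item:bdd} shows that every $\Obs^l(T,-)|_{T_0}$ is weakly bounded. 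The hypothesis makes $\mathrm{O}^l(T,-)|_{T_0}$ both weakly bounded and \GI, so the reduced form of Lemma~\ref{lem:cons_inj_surj}\itemref{lem:cons_inj_surj:item:inj} transfers \GI to the subfunctor $\widetilde{\mathrm{O}}^l(T,-)|_{T_0}$. An ascending induction on $l$ starting from $\Obs^0=0$, applying the reduced form of Lemma~\ref{lem:cons_inj_surj}\itemref{lem:cons_inj_surj:item:4inj} to the above exact sequences with $H_1=0$, then promotes \GI to each $\Obs^l(T,-)|_{T_0}$; the case $l=n$ is Condition~\ref{cnd:cons_obs}.

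The $(\Leftarrow)$ direction of \itemref{lem:cons+Zarloc_obs_equiv:item:Zar_loc} will proceed in parallel. An open immersion $g\colon U\hookrightarrow T$ is affine and \'etale, so Lemma~\ref{lem:der_exal_props_record}\itemref{lem:der_exal_props_record:item:et} yields a pullback on $\Exal_Z$, and Zariski-functoriality of the $n$-step theory assembles this, for each $l$, into a morphism of short exact sequences between the sequence for $T$ evaluated at $\kappa(\eta)$ and the analogous one for $U$. The hypothesis makes the vertical map $\mathrm{O}^l(T,\kappa(\eta))\to \mathrm{O}^l(U,\kappa(\eta))$ injective, and its restriction to the subfunctor $\widetilde{\mathrm{O}}^l$ is then also injective. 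Ascending induction on $l$ via the five lemma produces injectivity of $\Obs^l(T,\kappa(\eta))\to \Obs^l(U,\kappa(\eta))$ for every $l$; taking $l=n$ yields Condition~\ref{cnd:Zar_loc_obs}.

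The one place where care is needed is verifying that the restriction maps of the functorial $n$-step theory carry each subfunctor $\widetilde{\mathrm{O}}^l(T,g_*-)=\mathrm{im}(\mathrm{o}^l(T,g_*-))$ into $\widetilde{\mathrm{O}}^l(U,-)$. This, however, is built into the definition of a Zariski- (or \'etale-) functorial $n$-step theory through the commutative diagrams relating $\ker\mathrm{o}^{l-1}$ and $\mathrm{O}^l$ under $g$: an element of $\widetilde{\mathrm{O}}^l(T,g_*N)$ lifts to $\ker\mathrm{o}^{l-1}(T,g_*N)$, which maps to $\ker\mathrm{o}^{l-1}(U,N)$, whose image under $\mathrm{o}^l(U,-)$ is $\widetilde{\mathrm{O}}^l(U,N)$. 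Thus the apparent obstacle is only bookkeeping, and the argument is essentially a mechanical application of Lemmas~\ref{lem:obs_vs_min_obs} and~\ref{lem:cons_inj_surj} together with the five lemma.
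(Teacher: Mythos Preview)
Your proposal is correct and follows essentially the same route as the paper: use the minimal obstruction theory for the forward directions, and for the reverse directions exploit the short exact sequences of Lemma~\ref{lem:obs_vs_min_obs} together with Lemma~\ref{lem:cons_inj_surj} to pass \GI\ and weak boundedness from the $\widetilde{\mathrm{O}}^l$ up to $\Obs_\Phi$. The paper's write-up is terser---it dispatches \itemref{lem:cons+Zarloc_obs_equiv:item:Zar_loc}$(\Leftarrow)$ in one line by invoking Lemma~\ref{lem:obs_vs_min_obs}, and in \itemref{lem:cons+Zarloc_obs_equiv:item:cons}$(\Leftarrow)$ it obtains weak boundedness of the $\Obs^l$ directly as iterated extensions of the weakly bounded $\widetilde{\mathrm{O}}^l$ rather than by your descending induction from $\Obs^n$---but the substance is the same.
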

\begin{proof}
  If $\Phi$ satisfies Conditions~\ref{cnd:bdd_obs} and~\ref{cnd:cons_obs}, then the
  minimal obstruction theory satisfies \ref{cnd:cons_obs:n-step}. Conversely,
  assume that we are given an obstruction theory $\mathrm{O}^{l}(-,-)$ as in
  \ref{cnd:cons_obs:n-step}. Let $T$ be an affine irreducible
  $Y$-scheme that is locally
  of finite type over $S$.
  Then the subfunctors $\mathrm{\widetilde{O}}^{l}(T,-)|_{\red{T}}\subset \mathrm{O}^{l}(T,-)|_{\red{T}}$
  of Lemma~\ref{lem:obs_vs_min_obs} are also \GI and weakly bounded by
  Lemma~\ref{lem:cons_inj_surj}\itemref{lem:cons_inj_surj:item:inj}. Since
  $\Obs_\Phi(T,-)$ is an iterated extension of the
  $\mathrm{\widetilde{O}}^{l}(T,-)$'s, it follows that $\Obs_\Phi(T,-)|_{\red{T}}$
  is \GI
  and weakly bounded by
  Lemma~\ref{lem:cons_inj_surj}\itemref{lem:cons_inj_surj:item:4inj}---thus
  Conditions \ref{cnd:cons_obs} and \ref{cnd:bdd_obs} hold.

  If Condition \ref{cnd:Zar_loc_obs} holds then the
  minimal obstruction theory satisfies \ref{cnd:Zar_loc_obs:n-step}. That
  Condition \ref{cnd:Zar_loc_obs:n-step} implies Condition \ref{cnd:Zar_loc_obs}
  follows from Lemma~\ref{lem:obs_vs_min_obs}.
\end{proof}
\section{Conditions on obstructions without an obstruction
theory}\label{sec:no-obs-theory}
In this section we give conditions without reference to linear obstruction
theories, just as in
\cite[{Thm.\ 5.3 [5$'$c]}]{MR0260746} and \cite{starr-2006}.

\begin{defn}[{\cite[5.1]{MR0260746}, \cite[Def.\ 2.1]{starr-2006}}]
  By a \emph{deformation situation} for $\Phi\colon Y\to Z$, we will mean data
  $(T_0\hookrightarrow T\hookrightarrow T',M)$, where $T$ is an irreducible
  affine $Y$-scheme that is locally of finite type over $S$, where $T_0=\red{T}$ is
  integral,
  where $M$ is a quasi-coherent $\Orb_{T_0}$-module, and where
  $T\hookrightarrow T'$
  is an $Z$-extension of $T$ by $M$. We say that the deformation situation is
  \emph{obstructed} if the $Z$-extension $T\hookrightarrow T'$ cannot be lifted
  to a $Y$-extension $T\hookrightarrow T'$.

  Let $\eta_0=\spec(K_0)$ denote the generic point of $T_0$, let
   $\eta=\spec(\Orb_{T,\eta_0})$, and let
   $\eta'=\spec(\Orb_{T',\eta_0})$. Thus
  $\eta\hookrightarrow \eta'$
  is a $Z$-extension of $\eta$ by $M_\eta=M\otimes_{\Orb_{T_0}} K_0$.
\end{defn}

\begin{boxcnd}[Constructibility of obstructions III]\label{cnd:cons_obs:artin}
  Given a deformation situation such that $M$ is a free $\Orb_{T_0}$-module and
  such that for every non-zero map $\epsilon \colon M_\eta \to K_0$, the resulting
  $Z$-extension $\eta\hookrightarrow \eta'_{\epsilon}$ of $\eta$ by $K_0$ is
  obstructed, then there exists a dense open subset $U_0 \subset |T_0|$ such that
  for all points $u\in U_0$ of finite type, and all non-zero maps
  $\gamma \colon M_0 \to \kappa(u)$, the
  resulting $Z$-extension $T\hookrightarrow T'_{\gamma}$ of $T$ by $\kappa(u)$
  is obstructed.
\end{boxcnd}
\begin{boxcnd}[Zariski localization of obstructions
  III]\label{cnd:Zar_loc_obs:artin} 
  For every affine and irreducible $Y$-scheme $T$ that is locally of finite type over
  $S$, such that the generic point $\eta\in |T|$ is of finite type, if a
  $Z$-extension 
  of $T$ by $\kappa(\eta)$ is obstructed, then for every
  affine open neighborhood $U\subset T$ of $\eta$, 
  the induced $Z$-extension of $U$ by $\kappa(\eta)$ is obstructed.
\end{boxcnd}

\begin{lem}\label{lem:cons+Zarloc_obs_equiv_artin}
  Let $S$ be a locally noetherian scheme and let $\Phi \colon Y \to Z$ be a $1$-morphism of $\HNIL$-homogeneous
  $S$-groupoids.
\begin{enumerate}
\item (Constructibility) If obstructions satisfy boundedness and
  Zariski localization, if $Z$ is $\HA$-homogeneous and if
  $Y$ and $Z$ are limit preserving, then
  Conditions~\ref{cnd:cons_obs} and~\ref{cnd:cons_obs:artin} for $\Phi$ are
  equivalent.\label{lem:cons+Zarloc_obs_equiv_artin:item:cons}
\item (Zariski localization)
  Conditions~\ref{cnd:Zar_loc_obs} and~\ref{cnd:Zar_loc_obs:artin} for $\Phi$
  are equivalent.\label{lem:cons+Zarloc_obs_equiv_artin:item:Zar_loc}
\end{enumerate}
\end{lem}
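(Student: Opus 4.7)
The plan is to reinterpret the ``Artin-style'' conditions~\ref{cnd:cons_obs:artin} and~\ref{cnd:Zar_loc_obs:artin} in terms of the minimal obstruction functor $\Obs_\Phi$ so that each becomes a direct translation of its counterpart stated via $\Obs_\Phi$.

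\textbf{Part (2).} This is essentially tautological. By definition, $\Obs_\Phi = \coker(\Exal_Y \to \Exal_Z)$, so an element of $\Obs_\Phi(T,\kappa(\eta))$ is represented by a $Z$-extension of $T$ by $\kappa(\eta)$ and vanishes precisely when this extension lifts to a $Y$-extension, i.e., is unobstructed. The natural map $\Obs_\Phi(T,\kappa(\eta)) \to \Obs_\Phi(U,\kappa(\eta))$ arises from restriction of extensions along the affine \'etale open immersion $U \hookrightarrow T$ via Lemma~\ref{lem:der_exal_props_record}\itemref{lem:der_exal_props_record:item:et} (the skyscraper $\kappa(\eta)$ is the pushforward of its restriction to $U$). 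Injectivity of this map for every non-empty open $U$ is then exactly the contrapositive of~\ref{cnd:Zar_loc_obs:artin}.

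\textbf{Part (1).} For each affine irreducible $Y$-scheme $T$ locally of finite type over $S$, I apply Proposition~\ref{prop:cons_expanded} to $F = \Obs_\Phi(T,i_*-)\colon \QCOH{T_0} \to \AB$ on the integral noetherian affine scheme $T_0 = \red T$. The hypotheses hold: $F(\Orb_{T_0})$ is finitely generated by~\ref{cnd:bdd_obs}, and $F$ preserves direct limits because $Y$ and $Z$ are limit preserving, so Lemma~\ref{lem:der_exal_props_record}\itemref{lem:der_exal_props_record:item:lp} makes $\Exal_Y(T,-)$ and $\Exal_Z(T,-)$, and hence their cokernel $\Obs_\Phi(T,-)$, preserve direct limits. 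Consequently, $F$ is \GI---i.e., Condition~\ref{cnd:cons_obs} holds for $T$---if and only if condition~\itemref{prop:GI-expanded-condition} of Proposition~\ref{prop:cons_expanded} holds.

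I then translate~\itemref{prop:GI-expanded-condition} into the language of deformation situations. An element $\omega \in F(M) = \Obs_\Phi(T,i_*M)$ represents an equivalence class of deformation situations $(T_0 \hookrightarrow T \hookrightarrow T', M)$; an $A_0$-linear $\epsilon\colon M \to K_0$ corresponds bijectively to a $K_0$-linear $M_\eta \to K_0$; and $\epsilon_*\omega \in \Obs_\Phi(T,K_0)$ represents $T \hookrightarrow T'_\epsilon$. The Zariski localization hypothesis~\ref{cnd:Zar_loc_obs}, combined with limit preservation, yields an injection
\[
\Obs_\Phi(T,K_0) \hookrightarrow \varinjlim_{V \ni \eta_0} \Obs_\Phi(V,K_0) = \Obs_\Phi(\eta,K_0),
\]
since each map $\Obs_\Phi(T,K_0) \to \Obs_\Phi(V,K_0)$ in the filtered system is already injective. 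Under this injection ``$\epsilon_*\omega \neq 0$'' is equivalent to ``$\eta \hookrightarrow \eta'_\epsilon$ is obstructed'', and analogously ``$\gamma_*\omega \neq 0$'' is equivalent to ``$T \hookrightarrow T'_\gamma$ is obstructed''.

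The last subtlety is that~\itemref{prop:GI-expanded-condition} allows $M$ to be free only over a localization $(A_0)_f$, whereas~\ref{cnd:cons_obs:artin} assumes global freeness on $T_0$. I reconcile these by passing to the open $T_f \subset T$ whose reduction is $D(f)$: using $\HA$-homogeneity of $Z$ to apply Lemma~\ref{lem:der_exal_props_record}\itemref{lem:der_exal_props_record:item:et} as an equivalence, one obtains $\Obs_\Phi(T,i_*M) \cong \Obs_\Phi(T_f,(i_*M)|_{T_f})$, which transports an instance of~\itemref{prop:GI-expanded-condition} for $(T,f,M,\omega)$ to an instance of~\ref{cnd:cons_obs:artin} for $(T_f, D(f), M|_{D(f)})$, and vice versa. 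Combining yields $\ref{cnd:cons_obs} \Leftrightarrow \itemref{prop:GI-expanded-condition} \Leftrightarrow \ref{cnd:cons_obs:artin}$. The main technical step is the injection $\Obs_\Phi(T,K_0) \hookrightarrow \Obs_\Phi(\eta,K_0)$, which is where all the ancillary hypotheses---limit preservation, $\HA$-homogeneity of $Z$, and the standing Zariski localization of obstructions---are simultaneously needed.
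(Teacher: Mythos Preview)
Your treatment of part~(2) is fine and matches the paper. The problem is in part~(1), specifically in the direction \ref{cnd:cons_obs:artin}~$\Rightarrow$~\ref{cnd:cons_obs}, where you invoke Condition~\ref{cnd:Zar_loc_obs} to obtain the injection
\[
\Obs_\Phi(T,K_0)\hookrightarrow \Obs_\Phi(V,K_0)
\]
for every open $V\subset T$. Condition~\ref{cnd:Zar_loc_obs} is only formulated for irreducible $T$ whose \emph{generic point is of finite type}; for $T$ of positive relative dimension over $S$ this fails, so the condition gives you nothing here. The same issue undermines your claimed isomorphism $\Obs_\Phi(T,i_*M)\cong\Obs_\Phi(T_f,(i_*M)|_{T_f})$: $\HA$-homogeneity of $Z$ makes $\Exal_Z(T,-)\to\Exal_Z(T_f,-)$ bijective, but $Y$ is only $\HNIL$-homogeneous, so on $\Obs_\Phi$ you only get a \emph{surjection}. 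Without the injection, you cannot pass from the hypothesis ``$\epsilon_*\omega\neq 0$ in $\Obs_\Phi(T,K_0)$'' of~($\dagger$) to the hypothesis ``$\eta\hookrightarrow\eta'_\epsilon$ is obstructed'' of~\ref{cnd:cons_obs:artin}, and the argument breaks.

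The paper circumvents this by an extra reduction step you are missing. Using $\HA$-homogeneity of $Z$, the maps $\Obs_\Phi(T,K_0)\to\Obs_\Phi(W,K_0)\to\Obs_\Phi(\eta,K_0)$ are all \emph{surjective}; combined with limit preservation of $Y$ and $Z$, one finds a sufficiently small dense open $W\subset T$ for which $\Obs_\Phi(W,K_0)\to\Obs_\Phi(\eta,K_0)$ is \emph{bijective}. One then verifies ($\dagger$) for $F_W=\Obs_\Phi(W,-)|_{W_0}$ rather than for $F$: on $W$ the dictionary between ($\dagger$) and~\ref{cnd:cons_obs:artin} is now exact. Finally, the passage from ``$F_W$ is \GI'' back to ``$F$ is \GI'' uses Condition~\ref{cnd:Zar_loc_obs} applied at the \emph{finite-type} points $u$ (via the closed subscheme $\overline{\{u\}}$, where it legitimately applies) together with Lemma~\ref{lem:obs_prop}. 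This two-step manoeuvre---shrink to $W$, then transport back---is precisely what replaces your unjustified global injection.
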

\begin{proof}
  Condition~\ref{cnd:Zar_loc_obs:artin} is a straightforward expansion of 
  Condition~\ref{cnd:Zar_loc_obs}.

  To see that Conditions~\ref{cnd:cons_obs} and \ref{cnd:cons_obs:artin} are
  equivalent we will use Proposition~\ref{prop:cons_expanded} with
  $F(-)=\Obs_\Phi(T,-)|_{T_0}$. Some care is needed, though, as condition
  \itemref{prop:GI-expanded-condition} of
  Proposition~\ref{prop:cons_expanded} is not quite equivalent to
  Condition~\ref{cnd:cons_obs:artin}.

  If condition~\itemref{prop:GI-expanded-condition}
  is satisfied for $F(-)=\Obs_\Phi(T,-)|_{T_0}$, i.e., if $F$
  is \GI, then it is easily seen that Condition~\ref{cnd:cons_obs:artin}
  holds for $T$. Indeed, consider a deformation situation as in
  Condition~\ref{cnd:cons_obs:artin} and let $\omega\in F(M)$ be the
  corresponding obstruction. Then $\epsilon_*\omega\in F(K_0)$ is non-zero
  since its image in $\Obs_\Phi(T_\eta,K_0)$ is non-zero. Thus, there is an
  open dense subset $U_0\subset |T_0|$ such that $\gamma_*\omega \in
  F(\kappa(u))$ is non-zero for all $u\in U_0$ of finite type and non-zero maps
  $\gamma\colon M_0\to \kappa(u)$, that is, Condition~\ref{cnd:cons_obs:artin}
  holds.

  Conversely, fix a deformation situation $(T_0\hookrightarrow T\hookrightarrow
  T',M)$ and assume that Condition~\ref{cnd:cons_obs:artin} holds for all
  deformation situations that are restrictions of the fixed one along any open
  dense subset $W\subset T$. We will prove that $F_W(-)=\Obs_\Phi(W,-)|_{W_0}$
  is \GI for sufficiently small $W$. It then follows from the Zariski
  localization Condition~\ref{cnd:Zar_loc_obs} and Lemma~\ref{lem:obs_prop}
  that $F(-)$ is \GI as
  well.

  There are natural maps $\Exal_Y(T,-)\to
  \Exal_Y(W,-)\to \Exal_Y(\eta,-)$ and similarly for $Z$.
  Since $Z$ is $\HA$-homogeneous, the maps $\Exal_Z(T,-)\to
  \Exal_Z(W,-)\to \Exal_Z(\eta,-)$ are bijective
  (Lemma~\ref{lem:der_exal_props_record}\itemref{lem:der_exal_props_record:item:et})
  so that the induced maps
  $\Obs_\Phi(T,-)\to \Obs_\Phi(W,-)\to \Obs_\Phi(\eta,-)$ are surjective.
  As $Y$ and $Z$ are limit preserving,
  it follows that for sufficiently small $W$, the homomorphism
  $\Obs_\Phi(W,K_0)\to \Obs_\Phi(\eta,K_0)$ of $K_0$-modules is bijective.

  It is now easily verified that condition \itemref{prop:GI-expanded-condition} of
  Proposition~\ref{prop:cons_expanded} holds for
  $F_W(-)=\Obs_\Phi(W,-)|_{W_0}$. Indeed, let $f$, $M$ and $\omega$ be as in
  condition \itemref{prop:GI-expanded-condition}.
  Let $V=D(f)$ and let $V\hookrightarrow V'$ be an $Z$-extension
  of $V$ by $M$ with obstruction $\omega$. Since $\Obs_\Phi(W,K_0)\to
  \Obs_\Phi(\eta,K_0)$ is injective (even bijective), it follows that
  Condition~\ref{cnd:cons_obs:artin} applies for the extension
  $V\hookrightarrow V'$. Thus there is some dense open subset $U_0\subset V_0$
  such that for every non-zero map $\gamma\colon M\to \kappa(u)$, with $u\in
  U_0$ of finite type, $\gamma_*\omega$ is non-zero in
  $\Obs_\Phi(V,\kappa(u))$. Then, by the Zariski localization
  Condition~\ref{cnd:Zar_loc_obs}, it follows that $\gamma_*\omega$ is non-zero
  in $\Obs_\Phi(W,\kappa(u))$ so that condition
  \itemref{prop:GI-expanded-condition} of
  Proposition~\ref{prop:cons_expanded} holds.
\end{proof}
\begin{rem}
If $S$ is of finite type over a Dedekind domain as in~\cite{MR0260746} (or
Jacobson), then in
Condition~\ref{cnd:cons_obs:artin} it is enough to consider closed points
$u\in U$. Indeed, in the proof of the lemma above, we are free to pass to
open dense subsets and every $S$-scheme of finite type has a dense open
subscheme which is Jacobson.
\end{rem}

\section{Proof of Main Theorem}\label{sec:mainthm}
In this section we prove the main theorem. We begin by giving a precise
definition of effectivity.

\begin{defn}\label{def:effective}
Let $X$ be a category fibered in groupoids over the category of $S$-schemes.
We say that $X$ is \emph{effective} if for every local noetherian ring
$(B,\mathfrak{m})$, such that $B$ is $\mathfrak{m}$-adically complete, with an
$S$-scheme structure $\spec B \to S$ such that the induced morphism $\spec
(B/\mathfrak{m}) \to S$ is locally of finite type, the natural functor:
\[
\FIB{X}{\spec B} \to \varprojlim_n \FIB{X}{\spec (B/\mathfrak{m}^{n})}
\]
is dense and fully faithful. Here dense means that for every object
$(\xi_n)_{n\geq 0}$ in the limit and for every $k\geq 0$, there exists an
object $\xi\in\FIB{X}{\spec B}$ such that its image in 
$\FIB{X}{\spec (B/\mathfrak{m}^{k})}$
is isomorphic to $\xi_k$.
\end{defn}

If $X$ is an algebraic stack, then the functor
$\FIB{X}{\spec B} \to \varprojlim_n \FIB{X}{\spec (B/\mathfrak{m}^{n})}$
is an equivalence of categories---thus every algebraic stack
is effective.

We now obtain the following algebraicity criterion for groupoids.  
\begin{prop}\label{prop:alg_exal}
  Let $S$ be an excellent scheme. Then an $S$-groupoid $X$ is an
  algebraic $S$-stack that is locally of finite presentation over $S$, if and
  only if it satisfies the following conditions.
  \begin{enumerate}
  \item $X$ is a stack over $(\SCH{S})_{\Et}$.
  \item $X$ is limit preserving.\label{prop:item:alg_exal:lp}
  \item $X$ is $\HArtINSEP$- and $\HrCL$-homogeneous.
  \item $X$ is effective.\label{prop:item:alg_exal:eff}
  \item The diagonal morphism $\Delta_{X/S} \colon X \to X\times_S X$ is
    representable. 
\renewcommand{\theenumi}{6\alph{enumi}} 
\setcounter{enumi}{0}
  \item Condition \ref{cnd:bdd_def} (boundedness of deformations).
  \item Condition \ref{cnd:cons_exal} (constructibility of extensions).
  \item Condition \ref{cnd:Zar_loc_exal} (Zariski localization of extensions)
  \end{enumerate}
\end{prop}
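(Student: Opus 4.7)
The forward direction is routine: any algebraic stack locally of finite presentation over $S$ is a stack in the \'etale topology, limit preserving, $\HA$-homogeneous (hence $\HArtINSEP$- and $\HrCL$-homogeneous) by \cite[Prop.~2.1]{2011arXiv1111.4200W}, effective as noted after Definition~\ref{def:effective}, and has representable diagonal. The $\Exal$-conditions \ref{cnd:bdd_def}, \ref{cnd:cons_exal}, \ref{cnd:Zar_loc_exal} follow from Lemma~\ref{lem:bdd_cons_alg_stk_true} applied to the cotangent complex (which yields bounded, \CB, half-exact, limit-preserving $\Def$ and $\Exal$) together with Lemma~\ref{lem:der_exal_props_record}\itemref{lem:der_exal_props_record:item:et} for Zariski locality. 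The substance lies in the reverse direction, for which I would follow the four-step outline from the introduction.

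First, combine hypothesis~(1) (\'etale stack) with $\HArtINSEP$-homogeneity in~(3), invoking Lemma~\ref{lem:HArt}\itemref{lem:HArt:et+insephom} to upgrade to full $\HArt$-homogeneity. It then suffices to produce, for each field-valued point $x_0\colon \spec k \to X$ with $k$ of finite type over $S$, a smooth morphism $T\to X$ from a scheme $T$ locally of finite presentation over $S$ whose image contains $x_0$. For step~\itemref{step:formal}, apply the Schlessinger--Rim construction: $\HArtTriv$-homogeneity (a consequence of $\HArt$-homogeneity) together with Condition~\ref{cnd:bdd_def} (which ensures $\Def_X(\spec k,k)$ is finite-dimensional over $k$) yields a formally versal deformation $\hat\xi \in \varprojlim_n \FIB{X}{\spec(\hat A/\mathfrak m^n)}$ for some complete local noetherian $S$-algebra $\hat A$ with residue field a finite extension of $k$. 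Step~\itemref{step:algebraization} then uses effectivity~(4) to produce an object $\xi\colon \spec \hat A \to X$, followed by N\'eron--Popescu desingularization and Artin approximation (available because $S$ is excellent and $X$ is limit preserving by~(2)) to obtain a finite-type algebraization $T\to X$ together with a point $t_0 \in |T|$ mapping to $x_0$, such that $T\to X$ is formally versal at~$t_0$.

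For steps \itemref{step:openness-of-fv} and \itemref{step:fv-fs}, the plan is to apply Theorem~\ref{thm:fv_art_flenner}. The inputs required there are limit preservation, $\HrCL$-homogeneity, and the three $\Exal$-conditions~\ref{cnd:bdd_exal}, \ref{cnd:cons_exal}, \ref{cnd:Zar_loc_exal}. Of these, \ref{cnd:cons_exal} and \ref{cnd:Zar_loc_exal} are assumed as~(6b) and~(6c); Condition~\ref{cnd:bdd_exal} is not assumed directly but is deduced by applying Lemma~\ref{lem:bdd} to the structure morphism $X \to \SCH{S}$, whose target satisfies~\ref{cnd:bdd_exal} by Lemma~\ref{lem:bdd_cons_alg_stk_true}, while Condition~\ref{cnd:bdd_def} supplies the relative boundedness hypothesis. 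Theorem~\ref{thm:fv_art_flenner} then produces a Zariski open neighborhood $U \subseteq T$ of $t_0$ on which $T\to X$ is formally versal at every point of finite type. Since $T\to X$ is representable by algebraic spaces (hypothesis~(5)) and $X$ is $\HArt$-homogeneous, Proposition~\ref{prop:artin_prop42} promotes formal versality to formal smoothness on $U$, and Lemma~\ref{lem:fsmooth_pt+repr} converts this to actual smoothness of the pullback to any test scheme. Letting $x_0$ range over all such points produces a smooth cover of~$X$, so $X$ is algebraic, and limit preservation forces it to be locally of finite presentation.

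The main obstacle, conceptually, is the interaction in step~\itemref{step:fv-fs}: passing from formal versality at a single closed point to formal \emph{smoothness} on an open neighborhood would be problematic if one only had Artin's semihomogeneity, because then one would need \'etale localization of obstruction theories and would stumble on inseparable residue field extensions. What makes the proof go through cleanly is precisely the upgrade to $\HArt$-homogeneity granted by Lemma~\ref{lem:HArt}---so the \'etale-stack hypothesis~(1) combined with $\HArtINSEP$-homogeneity in~(3) does the heavy lifting that in Artin's original argument required a delicate and (as the introduction points out) incomplete use of obstruction theories.
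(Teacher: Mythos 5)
Your proposal is correct and follows essentially the same route as the paper: both deduce $\HArt$-homogeneity from Lemma~\ref{lem:HArt}, obtain a formally versal affine $X$-scheme via the Schlessinger--Rim/Conrad--de Jong construction using effectivity and Condition~\ref{cnd:bdd_def}, deduce Condition~\ref{cnd:bdd_exal} from Lemma~\ref{lem:bdd}, and apply Theorem~\ref{thm:fv_art_flenner} together with representability of $T\to X$ to upgrade formal versality to formal smoothness on an open neighborhood. The only cosmetic difference is that the paper packages the first two steps as a citation of \cite[Cor.~4.6]{hallj_openness_coh} and \cite[Thm.~1.5]{MR1935511}, whereas you unwind them explicitly.
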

\begin{proof}
  Just as in the proof of \cite[Cor.\ 4.6]{hallj_openness_coh} conditions
  \itemref{prop:item:alg_exal:lp}--\itemref{prop:item:alg_exal:eff}---using
  only $\HArtTriv$-homogeneity---together with Condition
  \ref{cnd:bdd_def} permit the application of \cite[Thm.\
  1.5]{MR1935511}. Thus for any pair $(\spec \Bbbk \xrightarrow{x} S,
  \xi)$, where $\Bbbk$ is a field,  $x$ is a morphism locally of finite
  type, and $\xi \in \FIB{X}{x}$, there exists a pointed and affine
  $X$-scheme $(Q_\xi,q)$ such that $Q_\xi$ is locally of finite type
  over $S$. There is also an isomorphism of $X$-schemes $\spec \kappa(q)
  \cong \spec \Bbbk$ and $Q_\xi$ is a formally versal $X$-scheme at
  the closed point $q$.

  As $X$ is
  $\HrCL$-homogeneous and satisfies Condition \ref{cnd:bdd_def},
  Lemma \ref{lem:bdd} implies that $X$ satisfies Condition
  \ref{cnd:bdd_exal} (boundedness of extensions). By Lemma~\ref{lem:HArt},
  $X$ is $\HArt$-homogeneous. Using the Conditions
  \ref{cnd:Zar_loc_exal} (Zariski localization), \ref{cnd:bdd_exal}
  (boundedness of extensions), and \ref{cnd:cons_exal} (constructibility
  of extensions), together with  $\HArt$-homogeneity, it
  follows from Theorem~\ref{thm:fv_art_flenner}
  that we are free to assume---by passing
  to an affine open neighborhood of $q$---that the $X$-scheme $Q_\xi$ is
  formally smooth.

  The remainder of the proof of \cite[Cor.\
  4.6]{hallj_openness_coh} applies without change. 
\end{proof}

Before we get to the proof of the Main Theorem we must prove the
following Lemma. 
\begin{lem}\label{lem:prorep_finhomg_affhomg}
  Let $S$ be a locally noetherian scheme and let $X$ be an $S$-groupoid
  satisfying the following conditions. 
  \begin{enumerate}
  \item $X$ is an \'etale stack.
  \item $X$ is limit preserving.
  \item The diagonal morphism $\Delta_{X/S} \colon X \to X\times_S X$ is
    representable.
  \item Let $\Bbbk$ be a field, let $x \colon \spec \Bbbk \to S$ be a
    morphism that is locally of finite type, and let $\xi
    \in X(x)$. Then there exists a pointed affine $X$-scheme $(T_{\xi},t)$
    with the following properties.
    \begin{enumerate}
    \item $T_\xi$ is locally of finite type over $S$.
    \item The point $t\in |T_\xi|$ is closed and the $X$-schemes
      $\xi$ and $\spec \kappa(t)$ are isomorphic.
    \item The $X$-scheme $T_\xi$ is
      formally smooth at $t\in |T_\xi|$ (resp.\ at every generization of $t$).
    \end{enumerate}
  \end{enumerate}
  Then $X$ is $\HINT$-homogeneous (resp.\ $\HA$-homogeneous).
\end{lem}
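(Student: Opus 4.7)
The plan is to verify the appropriate version of $P$-homogeneity for $P\in\{\HINT,\HA\}$ by reducing via Lemma~\ref{lem:hom_approx} to schemes of finite presentation over $S$ and then handling fully faithfulness and essential surjectivity of the comparison functor separately. The étale stack hypothesis will supply gluing, the representable diagonal will render every morphism from a scheme to $X$ representable (so Isom sheaves and pullbacks $T_\xi\times_X W$ are algebraic spaces), and the formally smooth charts will supply the decisive local lifts.

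After this reduction, fix a diagram $[\spec B\leftarrow\spec A\hookrightarrow\spec A']$ with $\spec A\hookrightarrow\spec A'$ a nilpotent closed immersion and $\spec A\to\spec B$ integral (resp.\ affine), all of finite presentation over $S$, and set $\spec B'=\spec(A'\times_A B)$. For fully faithfulness, given $\zeta_1,\zeta_2\in\FIB{X}{\spec B'}$ equipped with matching data in $\FIB{X}{\spec A'}\times_{\FIB{X}{\spec A}}\FIB{X}{\spec B}$, the presheaf of isomorphisms $\zeta_1\cong\zeta_2$ on $\spec B'$ is represented by an algebraic space $I\to\spec B'$ because $\Delta_{X/S}$ is representable. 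The given data provide sections of $I$ over $\spec A'$ and $\spec B$ agreeing over $\spec A$, and $\HA$-homogeneity of algebraic spaces (a case of~\cite[Prop.~2.1]{2011arXiv1111.4200W}) glues these into a unique section over the pushout $\spec B'$.

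For essential surjectivity, given $(\xi_{A'},\xi_B,\alpha)$, pick any closed point $b'\in\spec B'$; via the homeomorphism $\spec B\hookrightarrow\spec B'$ it corresponds to a closed point $b\in\spec B$, and we extract the residue object $\xi:=\xi_B|_b\in\FIB{X}{\spec\kappa(b)}$. Invoke hypothesis~(4) to choose $(T_\xi,t)$ formally smooth at $t$ (resp.\ at every generization of $t$). Since $T_\xi\to X$ is representable, the pullbacks $P_{A'}:=T_\xi\times_X\spec A'$, $P_A:=T_\xi\times_X\spec A$, $P_B:=T_\xi\times_X\spec B$ are algebraic spaces, and $P_{A'}\to\spec A'$, $P_B\to\spec B$ are smooth near every point above $t$ by Lemma~\ref{lem:fsmooth_pt+repr}. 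Using $\alpha$ to identify $P_{A'}|_{\spec A}\cong P_A\cong P_B|_{\spec A}$, a standard pushout construction for algebraic spaces---available because $P_A\hookrightarrow P_{A'}$ is a nilpotent closed immersion and $P_A\to P_B$ is integral (resp.\ affine)---produces an algebraic space $P=P_{A'}\sqcup_{P_A}P_B$ over $\spec B'$ that is smooth at a point above $t$. Thus $P$ admits a section over some étale neighborhood of $b'$, which composed with $T_\xi\to X$ yields an étale-local lift restricting to $\xi_{A'}$ and $\xi_B$. Carrying out this construction at every closed point of $\spec B'$ produces an étale cover equipped with compatible lifts (compatibility being the uniqueness supplied by the previous paragraph), and the étale stack condition glues these to the required $\xi_{B'}\in\FIB{X}{\spec B'}$.

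The principal obstacle is constructing $P$ as a pushout of algebraic spaces with the required smoothness above $t$, and verifying that the lifts at distinct closed points agree on overlaps coherently enough for the étale descent step. The $\HINT$-versus-$\HA$ distinction reflects the remark following Lemma~\ref{lem:homog_pushouts_int}: for $\HINT$ the pushout $\spec B'$ remains of finite type over $S$ so its finite-type points sit above closed points of $\spec B$, whereas for $\HA$ the scheme $\spec B'$ may fail to be of finite type, and one needs formal smoothness at every generization of $t$ to handle the additional finite-type points that arise during the étale-local construction.
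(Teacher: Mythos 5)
Your proposal follows the same route as the paper's proof: reduce to diagrams of finite presentation over $S$ via Lemma~\ref{lem:hom_approx}, use the \'etale stack hypothesis to work locally on $\spec B'$, pull the formally smooth charts of hypothesis~(4) back along the three corners of the diagram, push out, and take a section; handling full faithfulness via the $\mathrm{Isom}$ algebraic space coming from the representable diagonal is also exactly how the paper disposes of that half. However, two points are genuinely incomplete. First, hypothesis~(4) only supplies a chart when the residue field is of \emph{finite type} over $S$, but you anchor the construction at an arbitrary closed point $b'$ of $\spec B'$. When $S$ is not Jacobson a closed point need not be of finite type over $S$ (e.g.\ the closed point $(px-1)$ of $\Aff^1_{\Z_p}$), so (4) cannot be invoked there. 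The paper instead works at the dense set of finite-type points of $\spec B'$ (the same as those of $\spec A_2$, since the two spaces are homeomorphic with identical residue fields) and then notes that the resulting smooth neighborhoods cover everything because a nonempty closed subset of a scheme of finite type over locally noetherian $S$ always contains a finite-type point.

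Second, the step you yourself flag as ``the principal obstacle'' is the actual content of the proof and is left undone. The pushout $P_{A'}\sqcup_{P_A}P_B$ is not smooth, or even flat, over $\spec B'$ unless one first replaces the three pullbacks by \emph{compatibly chosen} open subspaces of their smooth loci: one needs opens $P''_j\subseteq P_j^{\mathrm{sm}}$ with $P''_A$ equal both to $p^{-1}(P''_B)$ and to $i^{-1}(P''_{A'})$. The delicate point is making this choice without losing the fibre over $t$. The paper does this by deleting the closure of $Z=p\bigl(P_A\setminus i^{-1}(P_{A'}^{\mathrm{sm}})\bigr)$ from $P_B^{\mathrm{sm}}$; the fibre over $t$ survives because $Z$ is already closed when $p$ is finite (the $\HINT$ case), whereas in the affine case $\overline{Z}$ consists of specializations of points of $Z$, which lie over generizations of $t$---and this is precisely why the $\HA$ statement requires formal smoothness at every generization of $t$. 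Your attribution of the $\HINT$/$\HA$ dichotomy to $\spec B'$ failing to be of finite type over $S$ misses where the distinction actually enters: it is the image under $p$ of the non-smooth locus that must be controlled, not the finite-type points of the pushout base. Once the compatible opens are in place, flatness and smoothness of $P''_3\to\spec B'$ and the induced map to $T_\xi$ are supplied by the cube lemmas [Lem.~A.4, A.5] of the first author's earlier paper, and uniqueness (hence the coherence of the local lifts needed for \'etale descent) again follows from the representable diagonal.
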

\begin{proof}
  Since $X$ is a Zariski stack that is also limit preserving, to show that $X$
  is $\HINT$-homogeneous, it suffices, by Lemma~\ref{lem:hom_approx}, to prove
  the following assertion: for any diagram of affine $S$-schemes $[\spec A_2 \leftarrow \spec A_0
    \hookrightarrow \spec A_1]$ that are locally of finite type over $S$, where $A_1
  \twoheadrightarrow A_0$ is surjective with nilpotent kernel and $A_0\to A_2$
  is finite (resp.\ arbitrary), the canonical map
  \[
  X(\spec (A_2 \times_{A_0} A_1)) \to X(\spec A_2) \times_{X(\spec
    A_0)} X(\spec A_1) 
  \]
  is an equivalence. Let $A_3 = A_2\times_{A_0} A_1$ and
  for $j=0$, $\dots$,
  $3$ let $W_j = \spec A_j$. Then we must uniquely complete all
  commutative diagrams:
  \vspace{-6mm}
  \[
  \xymatrix@ur@-.8pc{W_0 \ar@^{(->}[r]^{i} \ar[d]_p & W_1 \ar[d]
    \ar@/^0.8pc/[ddr] & \\ W_2 
    \ar@{(->}[r] \ar@/_.8pc/[drr]& W_3 \ar@{-->}[dr] &  & \\ &
    & X }
  \vspace{-6mm}
  \]
  Since $X$ is an \'etale stack, the problem of
  constructing a map $W_3\to X$ is \'etale-local on
  $W_3$. Thus it is sufficient to construct for each point $w_3 \in
  |W_3|$ a smooth morphism of pointed schemes $(U_3,u_3) \to
  (W_3,w_3)$, together with a unique 
  map $U_3 \to X$ which is compatible with pulling back the square
  above by $U_3 \to W_3$. 

  The morphism $W_2 \to W_3$ is a nilpotent closed
  immersion, so $w_3$ is in the image of a unique point $w_2$ of
  $W_2$ and $\kappa(w_2) \cong \kappa(w_3)$. The points of $W_2$ which
  are of finite type over $S$ are dense, thus the same is true of
  $W_3$. So, we may assume 
  that the morphism $\spec \kappa(w_3) \to S$ is locally of finite
  type.

  By condition (4), there exists an affine $X$-scheme $T$ that is
  locally of finite type over $S$, which is formally smooth at a
  closed point $t\in |T|$, and the $X$-schemes $\spec \kappa(t)$
  and $\spec \kappa(w_3)$ are isomorphic. Let $W_j'=W_j\times_X
  T$ for $j=0,1,2$. By (3), the morphism 
  $T \to X$ is representable so, by
  Lemma \ref{lem:fsmooth_pt+repr}, the
  pull-back $W_j'\to W_j$ is smooth in a neighborhood of the inverse image of
  $t$ (resp.\ the inverse image of $T_t=\spec(\Orb_{T,t})$).
  Let $W'^{\mathrm{sm}}_j\subset W_j'$ be the smooth locus of
  $W_j'\to W_j$. Let $Z_2=p(W'_0\setminus {i}^{-1}(W'^{\mathrm{sm}}_1))$
  and let $W''_2=W'^{\mathrm{sm}}_2\setminus \overline{Z_2}$
  and $W''_0=p^{-1}(W''_2)$ and
  $W''_1={i}(W''_0)$ as open subsets of $W_j'^{\mathrm{sm}}$. Then all
  points above $t$ belong to the $W''_j$. Indeed, it is enough to check that
  $\overline{Z_2}$ does not contain any points above $t$. But $Z_2$ does not
  contain any points above $t$ (resp.\ $T_t$) and since $p$ is finite, $Z_2$ is
  closed (resp.\ every point of $\overline{Z_2}$ is a specialization of a point
  in $Z_2$). In particular, we
  have that $w_2$ is in the image of $W''_2$.

  By \cite[Lem.\
  A.4]{hallj_openness_coh}, there is a commutative diagram of
  $S$-schemes: 
  \[
    \xymatrix@R-1.6pc@C-.5pc{   & W''_0 \ar[dd]|!{[dl];[d]}\hole \ar[dl]
    \ar@{(->}[rr] & & 
    W''_1 \ar[dl] 
    \ar[dd] \\ W''_2
    \ar[dd]  \ar@{(->}[rr] & & W''_3 \ar[dd]  &   \\   & W_0
    \ar@{(->}[rr]|!{[r];[dr]}\hole 
    \ar[dl] & & W_1 \ar[dl]\\ 
    W_2 \ar@{(->}[rr] & &W_3  &   } 
  \]
  where all faces of the cube are cartesian, the top and bottom faces
  are cocartesian, and the map $W_3'' \to W_3$ is flat. By \cite[Lem.\
  A.5]{hallj_openness_coh}, the morphism is $W_3'' \to W_3$ is
  smooth. Since the top square is cocartesian, and there are
  compatible maps $W_j'' \to T$ for $j\neq 3$, there is a
  uniquely induced map $W_3'' \to T$. Taking the composition of
  this map with $T \to X$, we obtain a map $W_3'' \to X$
  which is compatible with the data. This map is unique because the
  diagonal of $X$ is representable. As $W_3''\to W_3$ is a smooth
  neighborhood of $w_3$, the result follows.
\end{proof}
We are now ready to prove the Main Theorem. 
\begin{proof}[Proof of Main Theorem] 
  Repeating the bootstrapping techniques of the proof of \cite[Thm.\
  A]{hallj_openness_coh}, it is sufficient to prove the result in the 
  case where the diagonal $1$-morphism $\Delta_{X/S} \colon X \to X\times_S
  X$ is representable.

  As in the first part of the proof of Proposition \ref{prop:alg_exal}, we see
  that for every point $x\colon \spec(\Bbbk)\to X$ that is of finite type
  over $S$, there exists 
  an affine $X$-scheme $(Q_\xi,q)$ such that $Q_\xi$ is locally of finite type
  over $S$, together with an isomorphism of $X$-schemes $\spec \kappa(q) \cong \spec
  \Bbbk$, and $Q_\xi$ is a formally versal $X$-scheme at the closed point
  $q$. Now since $X$ is $\HArt$-homogeneous (Lemma~\ref{lem:HArt}), it follows
  that $Q_\xi$ is
  formally smooth at the closed point $q$ by Lemma~\ref{lem:smooth}. If $X$ is
  $\HDVR$-homogeneous, then $Q_\xi$ is even formally smooth at every
  generization of $q$ by Lemma~\ref{lem:DVR-hom_generizing}. Then,
  by Lemma \ref{lem:prorep_finhomg_affhomg}, we see that the $S$-groupoid $X$ is
  $\HINT$-homogeneous (and $\HA$-homogeneous if $X$ is $\HDVR$-homogeneous)
  and thus also $\HrCL$-homogeneous.

  So, by Proposition \ref{prop:alg_exal}, it remains
  to show that the hypotheses of the Theorem guarantee that Conditions
  \ref{cnd:cons_exal} and \ref{cnd:Zar_loc_exal} (constructibility and Zariski
  localization of extensions) hold for $X$. We saw that if $X$ is
  $\HDVR$-homogeneous, then $X$ is $\HA$-homogeneous and
  Condition \ref{cnd:Zar_loc_exal} holds by
  Lemma~\ref{lem:der_exal_props_record}\itemref{lem:der_exal_props_record:item:et}.
  Likewise, if $S$ is Jacobson, then Condition \ref{cnd:Zar_loc_exal} holds by
  Lemma~\ref{lem:Jacobson_implies_Zar_loc_exal}.

  Now, by Lemmata~\ref{lem:bdd_cons_alg_stk_true}
  and~\ref{lem:der_exal_props_record}\itemref{lem:der_exal_props_record:item:et}
  we have that Conditions~\ref{cnd:bdd_exal}, \ref{cnd:cons_exal} and
  \ref{cnd:Zar_loc_exal} (boundedness, constructibility and Zariski localization
  of extensions) hold for $S$. Trivially, Condition~\ref{cnd:bdd_obs}
  (boundedness of obstructions) then holds for $X$. By
  Lemmata~\ref{lem:cons+Zarloc_obs_equiv}\itemref{lem:cons+Zarloc_obs_equiv:item:Zar_loc},
  \ref{lem:cons+Zarloc_obs_equiv_artin}\itemref{lem:cons+Zarloc_obs_equiv_artin:item:Zar_loc} and \ref{lem:cmb_Zar_loc} we see that
  the hypothesis \itemref{mainthm:item:Zar_loc} implies
  Condition~\ref{cnd:Zar_loc_exal}.
  Similarly, by Lemmata~\ref{lem:cons+Zarloc_obs_equiv}\itemref{lem:cons+Zarloc_obs_equiv:item:cons},
  \ref{lem:cons+Zarloc_obs_equiv_artin}\itemref{lem:cons+Zarloc_obs_equiv_artin:item:cons} and \ref{lem:cmb_cons},
  the hypothesis \itemref{mainthm:item:cons} implies
  Condition~\ref{cnd:cons_exal}.
  We may thus apply Proposition~\ref{prop:alg_exal} to conclude that $X$ is an
  algebraic stack that is locally of finite presentation over $S$.
\end{proof}
\section{Comparison with other criteria}\label{sec:comparison}
In this section we compare our algebraicity criterion with Artin's
criteria~\cite{MR0260746,MR0399094}, Starr's criterion~\cite{starr-2006}, the
criterion of the first author~\cite{hallj_openness_coh}, the criterion in
the stacks project~\cite{stacks-project}, and Flenner's criterion
for openness of versality~\cite{MR638811}.

\subsection{Artin's algebraicity criterion for functors}
In~\cite[Thm.\ 5.3]{MR0260746}
Artin assumes [$0'$]=\itemref{mainthm:item:fppf-stack} (fppf stack),
[$1'$]=\itemref{mainthm:item:lp} (limit preserving) and
[$2'$]=\itemref{mainthm:item:eff} (effectivity). Further [$4'$](b)+[$5'$](a) is
$\HNIL$-homogeneity for irreducible schemes, which implies
\itemref{mainthm:item:homog}. His [$4'$](a)+(c) is boundedness,
Zariski-localization and constructibility of deformations (Conditions~\ref{cnd:bdd_def}, \ref{cnd:Zar_loc_def} and~\ref{cnd:cons_def}).
His [$5'$](c) is Condition~\ref{cnd:cons_obs:artin} (constructibility of
obstructions).
Finally, [$5'$](b) together with [$4'$](a) and [$4'$](b) implies
$\HDVR$-homogeneity and hence \itemref{mainthm:item:Zar_loc}. Conditions on
automorphisms are of course redundant for functors. Condition [$3'$](a) is only
used to assure that the resulting algebraic space is locally separated
(resp.\ separated) and condition [$3'$](b) guarantees that it is
quasi-separated. If one  is willing to accept non quasi-separated algebraic
spaces, no separation assumptions are necessary.

\subsection{Artin's algebraicity criterion for stacks}
Let us begin with correcting two typos in the statement of
\cite[Thm.\ 5.3]{MR0399094}. In (1) the condition should be that (S$1'$,2)
holds for $F$, not merely (S1,2), and in (2) the canonical map should be fully
faithful with dense image, not merely faithful with dense image. Otherwise it
is not possible to bootstrap and deduce algebraicity of the diagonal.

Artin assumes that $X$ is a stack for the \'etale topology \opcit[, (1.1)], and
that $X$ is limit preserving. He assumes (1) that the Schlessinger conditions
(S$1'$,2) hold and boundedness of automorphisms. In our terminology, (S$1'$) is
$\HrCL$-homogeneity, which implies $\HArtTriv$-homogeneity,
our~\itemref{mainthm:item:homog}. The other two conditions are exactly
boundedness of automorphisms and
deformations~\itemref{mainthm:item:bdd}. Artin's condition (2) is our
\itemref{mainthm:item:eff} (effectivity). Artin's condition (3) is \'etale
localization and constructibility of automorphisms, deformations and
obstructions, and compatibility with completions for automorphisms and
deformations. The constructibility condition is slightly stronger than
our~\itemref{mainthm:item:cons} and the \'etale localization condition implies
the much weaker~\itemref{mainthm:item:Zar_loc}. We do not use compatibility
with completions. Finally, Artin's condition (4) implies that the double
diagonal of the stack is quasi-compact and this condition can be omitted if we
work with stacks without separation conditions.
Thus~\cite[Thm.\ 5.3]{MR0399094} follows from our main theorem, except that
Artin only assumes that the groupoid is a stack in the \'etale topology. This
is related to the issue when comparing formal versality to formal smoothness
mentioned in the introduction and discussed in the beginning of
Section~\ref{sec:fv_fs}.

\begin{rem}
That automorphisms and deformations are sufficiently compatible with
completions for Artin's proof to go through actually follows from the other
conditions. In fact, let $A$ be a noetherian local ring with maximal ideal
$\mathfrak{m}$, let $T=\spec(A)$ and let $T\to X$ be given. Then the
injectivity of the comparison map
\[
\varphi\colon \Def_{X/S}(T,M)\otimes_A \hat{A}\to
\varprojlim_n \Def_{X/S}(T,M/\mathfrak{m}^n)
\]
for a finitely generated $A$-module $M$ follows from the boundedness of
$\Def_{X/S}(T,-)$, see Remark~\ref{rem:nakayama-Flenner}. If $T\to X$ is
formally versal, then $\varphi$ is also surjective. Indeed, from (S1) it
follows that $\Der_S(T,M/\mathfrak{m}^n)\to \Def_{X/S}(T,M/\mathfrak{m}^n)$ is
surjective for all $n$, so the composition $\Der_S(T,M)\otimes_A \hat{A}\cong
\varprojlim_n \Der_S(T,M/\mathfrak{m}^n)\to \varprojlim_n
\Def_{X/S}(T,M/\mathfrak{m}^n)$, which factors through $\varphi$, is surjective.
\end{rem}

The variant~\cite[Prop.~1.1]{starr-2006}, due to Starr, has the same conditions
as~\cite[Thm.\ 5.3]{MR0399094} except that it is phrased in a relative
setting. From Section~\ref{sec:rel_conds}, it is clear that our conditions can
be composed. The salient point is that with $\HrCL$-homogeneity (or even with
just (S1), i.e., $\HrCL$-semihomogeneity, as in~\cite{MR638811}), there is
always a linear minimal obstruction theory. There is further an exact sequence
relating the minimal obstruction theories for the composition of two
morphisms~\cite[Prop.~6.9]{hallj_openness_coh}.
Thus~\cite[Prop.~1.1]{starr-2006} also follows from our main theorem.

\subsection{The criterion~\cite{hallj_openness_coh} using coherence}
There are two differences between \cite[Thm.~A]{hallj_openness_coh} and
our main theorem. The first is that Condition~\itemref{mainthm:item:homog}
is strengthened to $\HA$-homogeneity. As this includes $\HDVR$-homogeneity,
\itemref{mainthm:item:Zar_loc} becomes redundant. Zariski localization also
follows immediately from $\HA$-homogeneity without involving
$\HDVR$-homogeneity, see discussion after Condition~\ref{cnd:Zar_loc_exal}. We
thus have the following version of our main theorem.
\begin{thm}
Let $S$ be an excellent scheme. Then a category $X$ that is fibered in groupoids over
the category of $S$-schemes, $\SCH{S}$, is an algebraic stack that is locally of
finite presentation over $S$, if and only if it satisfies the following
conditions.
\begin{enumerate}
  \item[($1'$)] $X$ is a stack over $(\SCH{S})_{\Et}$.
  \item[(2)] $X$ is limit preserving.
  \item[($3''$)] $X$ is $\HA$-homogeneous.
  \item[(4)] $X$ is effective.
\renewcommand{\theenumi}{5\alph{enumi}} 
\setcounter{enumi}{0}
  \item Automorphisms and deformations are bounded
    (Conditions~\ref{cnd:bdd_aut} and~\ref{cnd:bdd_def}).
  \item Automorphisms, deformations and obstructions are constructible
    (Conditions~\ref{cnd:cons_aut} and~\ref{cnd:cons_def} and either
    Condition~\ref{cnd:cons_obs}, \ref{cnd:cons_obs:n-step}
    or~\ref{cnd:cons_obs:artin}).
\end{enumerate}
\end{thm}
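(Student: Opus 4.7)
The plan is to deduce this variant from the Main Theorem by checking that its hypotheses imply all those of the Main Theorem. Conditions ($1'$), (2), (4), (5a), and (5b) transfer verbatim. Since $\HArtTriv\cup\HArtINSEP\subset\HA$, hypothesis ($3''$) is strictly stronger than either $\HArtTriv$- or $\HArtINSEP$-homogeneity, so together with ($1'$) it supplies condition ($1'$)+($3'$) of the Main Theorem. The only remaining task is to establish condition (5c), i.e., Zariski localization of automorphisms, deformations, and obstructions.

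For this, I would invoke the remark immediately following Condition~\ref{cnd:Zar_loc_exal}: under $\HA$-homogeneity, Zariski localization of extensions is automatic. Concretely, Lemma~\ref{lem:der_exal_props_record}\itemref{lem:der_exal_props_record:item:et}, applied to an open immersion of affine $X$-schemes $p\colon U\hookrightarrow T$ (which is affine \'etale), gives an equivalence $\EXAL_X(T,p_*N)\to\EXAL_X(U,N)$, whence $\Der_X(T,p_*N)\cong \Der_X(U,N)$ and $\Exal_X(T,p_*N)\cong \Exal_X(U,N)$. Taking $N=\kappa(u)$ for a point $u\in U$ of finite type, one has $p_*\kappa(u)=\kappa(u)$ as an $\Orb_T$-module, so the six-term exact sequence of Proposition~\ref{prop:derdefseq} applied to $X\to S$ yields bijectivity of the comparison maps for $\Aut_{X/S}$ and $\Def_{X/S}$ and injectivity for $\Obs_{X/S}$. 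Thus Condition~\ref{cnd:Zar_loc_aut+def+obs} holds.

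With all hypotheses of the Main Theorem now verified, it applies and shows that $X$ is an algebraic stack locally of finite presentation over $S$. The converse is standard: algebraic stacks are $\HA$-homogeneous by~\cite[Prop.~2.1]{2011arXiv1111.4200W}, limit preservation and effectivity hold, and Lemma~\ref{lem:bdd_cons_alg_stk_true} supplies (5a) and (5b). I expect no serious obstacles; the only point requiring some care is the six-term sequence bookkeeping above, but this is routine because the corresponding statements over the base $\SCH{S}$ (where $\Exal_S$ is governed by the cotangent complex) are automatic.
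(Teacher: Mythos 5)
Your proposal is correct in substance and follows the same overall strategy as the paper: every hypothesis of the Main Theorem except the Zariski-localization clause \itemref{mainthm:item:Zar_loc} transfers immediately (in particular $\HA$-homogeneity implies $\HArtINSEP$-homogeneity, so ($1'$)+($3'$) hold), and the only work is to discharge \itemref{mainthm:item:Zar_loc}. Where you diverge is in \emph{which} of the three disjuncts of \itemref{mainthm:item:Zar_loc} you verify. The paper's shortest route is to observe that $\HA$-homogeneity trivially contains $\HDVR$-homogeneity (the morphism $\spec K\to\spec D$ is affine), so the third disjunct holds and the Main Theorem applies as a black box; its second route is to note that what the proof of the Main Theorem actually consumes is Condition~\ref{cnd:Zar_loc_exal} on $\Exal_X$, which holds directly by Lemma~\ref{lem:der_exal_props_record}\itemref{lem:der_exal_props_record:item:et} under $\HA$-homogeneity. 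You instead verify the first disjunct, Condition~\ref{cnd:Zar_loc_aut+def+obs}, by running the six-term sequence of Proposition~\ref{prop:derdefseq}; this works but is a detour, since Lemma~\ref{lem:cmb_Zar_loc} then converts your statement back into Condition~\ref{cnd:Zar_loc_exal}. One small wrinkle in your route: Condition~\ref{cnd:Zar_loc_aut+def+obs} quantifies over arbitrary non-empty opens $U\subset T$, whereas Lemma~\ref{lem:der_exal_props_record}\itemref{lem:der_exal_props_record:item:et} requires the \'etale morphism to be \emph{affine}, so your argument as written only covers affine $U$. This is harmless if you follow the paper and target Condition~\ref{cnd:Zar_loc_exal} (which is stated only for affine opens), but it should be addressed if you insist on Condition~\ref{cnd:Zar_loc_aut+def+obs} verbatim. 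Your treatment of the converse is fine, though note that Lemma~\ref{lem:bdd_cons_alg_stk_true} directly concerns $\Der_X$ and $\Exal_X$, and one extracts (5a)--(5b) for $\Aut_{X/S}$, $\Def_{X/S}$, $\Obs_{X/S}$ from it via Proposition~\ref{prop:derdefseq} and Lemma~\ref{lem:cons_inj_surj}.
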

The second difference is that~\itemref{mainthm:item:bdd}
and~\itemref{mainthm:item:cons} are replaced with the condition that
$\Aut_{X/S}(T,-)$, $\Def_{X/S}(T,-)$, $\Obs_{X/S}(T,-)$ are \emph{coherent}
functors. This implies that the functors are bounded and $\CB$
(Example~\ref{ex:coherent-are-bounded-and-CB}), hence satisfy
\itemref{mainthm:item:bdd} and~\itemref{mainthm:item:cons}.

\subsection{The criterion in the Stacks project}
In the Stacks project, the basic version of Artin's
axiom~\cite[\spref{07XJ},\spref{07Y5}]{stacks-project} requires that
\begin{enumerate}
\item[{[0]}] $X$ is a stack in the \'etale topology,
\item[{[1]}] $X$ is limit preserving,
\item[{[2]}] $X$ is $\HArt$-homogeneous (this is the Rim--Schlessinger condition RS),
\item[{[3]}] $\Aut_{X/S}(\spec(k),k)$ and $\Def_{X/S}(\spec(k),k)$ are finite dimensional,
\item[{[4]}] $X$ is effective, and
\item[{[5]}] $X$, $\Delta_X$ and $\Delta_{\Delta_X}$ satisfy openness of versality.
\end{enumerate}
There is also a criterion for when $X$ satisfies openness of
versality~\cite[\spref{07YU}]{stacks-project} using naive obstruction
theories with finitely generated cohomology groups. This uses the
(RS*)-condition which is our
$\HA$-homogeneity~\cite[\spref{07Y8}]{stacks-project}.
The existence of the
naive obstruction theory implies that $\Aut_{X/S}(T,-)$,
$\Def_{X/S}(T,-)$, $\Obs_{X/S}(T,-)$ are bounded and $\CB$
(Example~\ref{ex:cmplx_good_properties}), hence satisfy
\itemref{mainthm:item:bdd} and~\itemref{mainthm:item:cons}.

In~\cite{stacks-project}, the condition that the base scheme $S$ is excellent
is replaced with the condition that its local rings are $G$-rings. In our
treatment, excellency enters at two places: in the application of
N\'eron--Popescu desingularization in Proposition~\ref{prop:alg_exal}
via~\cite{MR1935511} and in the context of $\HDVR$-homogeneity
in Lemma~\ref{lem:DVR-hom_generizing}. In
both cases, excellency can be replaced with the condition that the local
rings are $G$-rings without modifying the proofs.

\subsection{Flenner's criterion for openness of versality}
Flenner does not give a precise analogue of our main theorem, but his main
result~\cite[Satz~4.3]{MR638811} is a criterion for the openness of versality.
In his criterion he has a limit preserving $S$-groupoid which satisfies
(S1)--(S4). The first condition (S1) is identical to Artin's condition (S1),
i.e., $\HrCL$-semihomogeneity. The second condition (S2) is boundedness and
Zariski localization of deformations. The third condition (S3) is boundedness
and Zariski localization of the minimal obstruction theory. Finally (S4) is
constructibility of deformations and obstructions. The Zariski localization
condition is incorporated in the formulation of (S3) and (S4) which deals with
\emph{sheaves} of deformation and obstructions modules. His (S2)--(S4) are
marginally stronger than our conditions, for example, treating arbitrary
schemes instead of irreducible schemes. Theorem~\cite[Satz~4.3]{MR638811} thus
becomes the first part of Theorem~\ref{thm:fv_art_flenner}, in the view of
Section~\ref{sec:rel_conds}, except that we assume $\HrCL$-homogeneity instead
of $\HrCL$-semihomogeneity. This is a pragmatic choice that simplifies matters
since $\Exal_X(T,M)$ becomes a module instead of a pointed set. Also, in any
algebraicity criterion, we would need homogeneity to deduce that the
diagonal is algebraic and, conversely, if the diagonal is algebraic, then
semihomogeneity implies homogeneity.

\subsection{Criterion for local constructibility}
There is a useful criterion for when a sheaf (or a stack) is locally
constructible, that is, when it corresponds to an \'etale algebraic space
(or algebraic stack)~\cite[VII.7.2]{MR0407011}:
%
\begin{thm}\label{thm:etale-crit}
Let $S$ be an excellent scheme. Then a category $X$ that is
fibered in groupoids over $\SCH{S}$, is an
algebraic stack that is \'etale over $S$, if and only if it
satisfies the following conditions.
\begin{enumerate}
  \item \label{etalethm:item:fppf-stack}
    $X$ is a stack over $(\SCH{S})_{\Et}$.
  \item \label{etalethm:item:lp}
    $X$ is limit preserving.
  \item \label{etalethm:item:eff}
    $X(B)\to X(B/\mathfrak{m})$ is an equivalence of categories for every local
    noetherian ring $(B,\mathfrak{m})$, such that $B$ is
    $\mathfrak{m}$-adically complete, with an $S$-scheme structure $\spec B \to
    S$ such that the induced morphism $\spec (B/\mathfrak{m}) \to S$ is of
    finite type.
\end{enumerate}
\end{thm}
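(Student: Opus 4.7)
The ``only if'' direction is standard: an \'etale algebraic $S$-stack locally of finite presentation is clearly a stack in the \'etale topology and is limit preserving. For \itemref{etalethm:item:eff}, the base change $X\times_S \spec B\to \spec B$ is an \'etale algebraic stack over a henselian local base, so by Hensel's lemma (equivalently, topological invariance of the \'etale site under nilpotent thickenings of henselian local schemes), any section over $\spec(B/\mathfrak{m})$ lifts uniquely to a section over $\spec B$, giving an equivalence $\FIB{X}{\spec B}\simeq \FIB{X}{\spec(B/\mathfrak{m})}$.

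For the ``if'' direction, the plan is to apply the Main Theorem and then upgrade algebraicity to \'etaleness. Conditions \itemref{etalethm:item:fppf-stack} and \itemref{etalethm:item:lp} immediately give \itemref{mainthm:item:et-stack} and \itemref{mainthm:item:lp}. To verify $\HArt$-homogeneity (hence both \itemref{mainthm:item:homog} and \itemref{mainthm:item:insep-homog}), consider a Schlessinger diagram $[\spec B \leftarrow \spec A \hookrightarrow \spec A']$ of local artinian $S$-schemes of finite type with pushout $\spec(A'\times_A B)$. Condition \itemref{etalethm:item:eff} identifies each of $\FIB{X}{\spec A}$, $\FIB{X}{\spec A'}$, $\FIB{X}{\spec B}$ and $\FIB{X}{\spec(A'\times_A B)}$ with the $X$-value at the corresponding residue field; since $\FIB{X}{\spec A'}\to \FIB{X}{\spec A}$ becomes an equivalence, a diagram chase identifies the fiber product $\FIB{X}{\spec A'}\times_{\FIB{X}{\spec A}}\FIB{X}{\spec B}$ with $\FIB{X}{\spec(A'\times_A B)}$ via the natural functor. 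Effectivity \itemref{mainthm:item:eff} likewise follows: applying \itemref{etalethm:item:eff} to each $B/\mathfrak{m}^n$ (complete, being artinian) shows $\varprojlim_n \FIB{X}{\spec(B/\mathfrak{m}^n)}\simeq \FIB{X}{\spec(B/\mathfrak{m})}\simeq \FIB{X}{\spec B}$.

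The deformation-theoretic conditions \itemref{mainthm:item:bdd}--\itemref{mainthm:item:Zar_loc} become trivial once one shows that $\Aut_{X/S}$, $\Def_{X/S}$ and $\Obs_{X/S}$ vanish identically. For a finite-type residue field $k$ and a finite-dimensional $k$-vector space $M$, the trivial extension $\spec(k\oplus M)$ is local artinian of finite type with residue field $k$, so \itemref{etalethm:item:eff} yields $\FIB{X}{\spec(k\oplus M)}\simeq \FIB{X}{\spec k}$, forcing $\Aut_{X/S}(\spec k, M)=\Def_{X/S}(\spec k, M)=0$; moreover any $\SCH{S}$-extension of $\spec k$ by $M$ lifts to an $X$-extension by \itemref{etalethm:item:eff}, giving $\Obs_{X/S}(\spec k, M)=0$. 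The Ogus--Bergman Nakayama Lemma (Theorem~\ref{thm:nakayama}) then spreads the pointwise vanishing to arbitrary affine $X$-schemes of finite type, using the half-exactness made available inside the Main Theorem's proof once $\HrCL$-homogeneity has been derived via Lemma~\ref{lem:prorep_finhomg_affhomg} (from the $\HArt$-homogeneity established above, together with the formally smooth charts furnished by Lemma~\ref{lem:smooth} and the formal versal deformations). The Main Theorem now yields that $X$ is an algebraic $S$-stack locally of finite presentation.

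To upgrade to \'etaleness, with $\Aut_{X/S}=\Def_{X/S}=\Obs_{X/S}=0$ the six-term exact sequence of Proposition~\ref{prop:derdefseq} shows that $\Der_{X/S}\to \Der_{\SCH{S}}$ and $\Exal_{X/S}\to \Exal_{\SCH{S}}$ are isomorphisms; equivalently, for every first-order affine thickening $T_0\hookrightarrow T$ of $S$-schemes the functor $\FIB{X}{T}\to \FIB{X}{T_0}$ is an equivalence, which is formal \'etaleness of $X\to S$. Combined with local finite presentation this yields \'etaleness of $X\to S$. The main obstacle lies in the interplay between homogeneity and functor vanishing: one must carefully verify that the $\HArt$-homogeneity supplied by \itemref{etalethm:item:eff} is enough to engage the Main Theorem's machinery, and that the functor-level vanishing can be bootstrapped from the pointwise vanishing at finite-type residue fields via the Nakayama lemma once enough homogeneity ($\HrCL$, hence half-exactness) has been derived.
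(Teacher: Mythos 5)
Your reduction of conditions \itemref{mainthm:item:fppf-stack}--\itemref{mainthm:item:eff} of the Main Theorem to the three hypotheses here is fine ($\HArt$-homogeneity and effectivity do follow from \itemref{etalethm:item:eff} applied to artinian quotients, essentially as you say), and the pointwise vanishing $\Aut_{X/S}(T,N)=\Def_{X/S}(T,N)=\Obs_{X/S}(T,N)=0$ for $N$ with artinian support is also correct. The gap is the step where you invoke Theorem~\ref{thm:nakayama} to spread this vanishing over all of $\QCOH{T}$: the Ogus--Bergman lemma applies only to \emph{half-exact} (and bounded) functors. $\HrCL$-homogeneity makes $\Aut_{X/S}(T,-)$, $\Def_{X/S}(T,-)$ and $\Exal_X(T,-)$ half-exact, but $\Obs_{X/S}(T,-)$ is a cokernel of half-exact functors and, as Section~\ref{sec:aut_def_obs} explicitly warns, there is no reason to expect it to be half-exact. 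So you cannot conclude $\Obs_{X/S}(T,-)=0$, nor can you verify Conditions~\ref{cnd:bdd_obs} and~\ref{cnd:cons_obs} directly (they involve $\Obs_{X/S}(T,{i}_*\Orb_{T_0})$, whose support is not artinian unless $T_0$ is a point), and the Main Theorem does not engage. The paper flags precisely this obstacle and for that reason does \emph{not} prove Theorem~\ref{thm:etale-crit} via the Main Theorem.

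The paper's actual argument is direct: one shows that $j^{-1}j_*X\to X$ is an equivalence, where $j$ is the inclusion of the small \'etale site of $S$ into the big one; by limit preservation this reduces to checking stalks at points of finite type, i.e., to showing that $X(B)\to X(B/\mathfrak{m})$ is an equivalence when $B$ is a henselization of a local ring of a finite type $S$-scheme, which follows from \itemref{etalethm:item:eff} together with N\'eron--Popescu desingularization. Alternatively (the ``more elementary approach'' sketched at the end of Section~\ref{sec:comparison}), one bypasses obstruction theory entirely by observing that for an $X$-scheme $T$ locally of finite presentation over $S$ and a point $t$ of finite type, $T\to X$ is formally smooth at $t$ if and only if $T\to S$ is, so openness of versality is inherited from the openness of the smooth locus of $T\to S$. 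To salvage your route you would need one of these two devices in place of the Nakayama step for $\Obs_{X/S}$; the rest of your outline (including the final upgrade from algebraic to \'etale) is sound.
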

The necessity of the conditions is clear. That the conditions are
sufficient can be proven directly as follows. Let $j\colon
(\SCH{S})_{\Et}\to S_{\et}$ denote the morphism of topoi corresponding to the
inclusion of the small \'etale site into the big \'etale site. It is enough
to prove that $j^{-1}j_*X\to X$ is an equivalence. As $X$ is limit preserving,
it is enough to verify that $f^*(X|_{S_\et})\to X|_{T_{\et}}$ is an equivalence
for every morphism $f\colon T\to S$ locally of finite type, and this can be
checked on stalks at points of finite type. Therefore, it suffices to prove
that $X(B)\to X(B/\mathfrak{m})$ is an equivalence when $B$ is the henselization
of $\Orb_{T,t}$, for every $t\in |T|$ of finite type. This follows from general
N\'eron--Popescu desingularization and the
three conditions.

A proof more in the lines of this paper goes as follows:
from~\itemref{etalethm:item:eff} it follows that: $X$ is $\HArt$-homogeneous;
$X$ is effective; and $X\to S$ is formally \'etale at every point of finite
type. In particular, $\Aut_{X/S}(T,N)=\Def_{X/S}(T,N)=\Obs_{X/S}(T,N)=0$ for
every $X$-scheme $T$ that is of finite type over $S$ and every quasi-coherent
$\Orb_T$-module $N$ with support that is artinian (use
Lemmata~\ref{lem:def_prop} and~\ref{lem:obs_prop}). Thus,
$\Aut_{X/S}(T,-)=\Def_{X/S}(T,-)=0$ by
Theorem~\ref{thm:nakayama}. Theorem~\ref{thm:etale-crit} would follow from the
main theorem if we also can show that $\Obs_{X/S}(T,-)=0$. As we do not yet
know that $\Obs_{X/S}(T,-)$ is half-exact, it is apparently difficult to deduce
that $\Obs_{X/S}(T,-)=0$ without invoking Popescu desingularization. A more
elementary approach, that does not rely on the main theorem, is to note
that given an $X$-scheme $T$ that is locally of finite presentation over $S$, and a
point $t\in |T|$ of finite type, then $T\to X$ is formally smooth at $t$ if and
only $T\to S$ is formally smooth at $t$. Thus, openness of formal smoothness
for $T\to X$ follows.
\appendix
\section{Approximation of integral morphisms}
In this appendix, we give an approximation result for integral homomorphisms. It
is somewhat technical since the properties that we need---surjective and
surjective with nilpotent kernel---cannot be deduced for an arbitrary
approximation. In fact, the approximation has to be built with these properties in
mind.
\begin{lem}\label{lem:integral-approx}
Let $A$ be a ring, let $B$ be an $A$-algebra and let $C$ be an $B$-algebra.
Assume that $B$ and $C$ are integral $A$-algebras. Then there exists a filtered
system $(B_\lambda\to C_\lambda)_\lambda$ of finite and finitely presented
$A$-algebras, with direct limit $B\to C$. In addition, if $A\to B$ (resp.\ $B\to C$,
resp.\ $A\to C$) has one of the properties:
\begin{enumerate}
\item surjective,
\item surjective with nilpotent kernel,
\end{enumerate}
then $A\to B_\lambda$ (resp.\ $B_\lambda\to C_\lambda$, resp.\ $A\to
C_\lambda$) has the corresponding property.
\end{lem}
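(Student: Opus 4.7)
The plan is to realize $B \to C$ as a filtered colimit of a directed system $\Lambda$ whose indices encode finite packages of generators, their integral equations and finitely many ``witnessing'' relations. Concretely, an index $\lambda \in \Lambda$ is a tuple $(T,\{p_b\}_{b\in T},R_B,U,\{q_c\}_{c\in U},R_C)$, where $T \subset B$ and $U \subset C$ are finite, $p_b\in A[x_b]$ is a monic polynomial with $p_b(b)=0$ for each $b\in T$, each $q_c$ is a polynomial in $y_c$ monic with coefficients in $A[x_b]/(p_b)$ whose image in $B[y_c]$ annihilates $c$ (enlarging $T$ if necessary to hold the coefficients of an integral equation for $c$ over $B$), and $R_B \subset \ker(A[x_b]/(p_b) \to B)$, $R_C \subset \ker(A[x_b,y_c]/(p_b,q_c) \to C)$ are finite sets of relations. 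Set $B_\lambda = A[x_b]/(p_b,R_B)$ and $C_\lambda = B_\lambda[y_c]/(q_c,R_C)$; each is finite and finitely presented over $A$, with a canonical map to $B$ and to $C$. Ordering by inclusion makes $\Lambda$ directed, and any element of $B$ (resp.\ $C$) eventually lies in $T$ (resp.\ $U$) while any element of the relevant kernel eventually lies in $R_B$ (resp.\ $R_C$), so $\varinjlim_\lambda (B_\lambda \to C_\lambda) = (B \to C)$.

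The six property conditions are handled by cofinality: for each property I exhibit a subfamily of $\Lambda$ that is closed under enlargement and cofinal, whose indices satisfy the corresponding property; intersections of cofinal subfamilies remain cofinal, so all active hypotheses can be imposed simultaneously. For ``$A\to B$ surjective'', pick for each $b \in T$ an $A$-lift $a_b$ and adjoin $x_b - a_b$ to $R_B$; then $B_\nu = A/J_\nu$ with $J_\nu \subset \ker(A \to B)$, and if $\ker(A\to B)^N=0$ then $J_\nu^N \subset \ker(A\to B)^N = 0$ automatically. For ``$A\to C$ surjective'', adjoin to $R_C$ the relations $x_b - \alpha_b$ and $y_c - a_c$ for $A$-lifts $\alpha_b$ of $b|_C$ and $a_c$ of $c$; the analogous containment handles the nilpotent-kernel case. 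For ``$B\to C$ surjective'', enlarge $T$ to contain a $B$-lift $\beta_c$ of each $c\in U$ and adjoin $y_c - x_{\beta_c}$ to $R_C$, so that $C_\nu = B_\nu/\tilde J_\nu$ with $\tilde J_\nu$ finitely generated.

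The one genuine obstacle is ``$B\to C$ surjective with nilpotent kernel $K$, $K^N = 0$''. After the preceding step, $\tilde J_\nu$ is finitely generated, hence so is $\tilde J_\nu^N$, and its generators map to $K^N = 0$ in $B$ and therefore lie in $\ker(B_\nu \to B)$. Lifting these generators to elements of $A[x_b]$ and appending them to $R_B$ yields $\nu'\geq \nu$ with $\tilde J_{\nu'}^N = 0$ in $B_{\nu'}$; the $C$-side data descends without change, since the new relations already vanish in $B$ and hence in $C$. One subtlety is that a later enlargement of $R_C$ can \emph{enlarge} the kernel $\tilde J$, forcing another round of additions to $R_B$; but at each stage only a finitely generated ideal needs to be killed, so an alternating (and still finite) sequence of enlargements places us inside the desired cofinal subfamily. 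Combining the cofinal subfamilies for the active properties then yields the lemma.
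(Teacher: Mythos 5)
Your proposal is correct and follows essentially the same route as the paper's proof: both realize $B\to C$ as a filtered colimit of finite, finitely presented approximations built from finitely many generators with monic equations plus finitely many relations drawn from the kernels, and both handle the hardest case ($B\to C$ surjective with nilpotent kernel of index $N$) by the identical device of adjoining the finitely generated ideal $\tilde J^N$ (the paper's $J_L^n$) to the relations defining $B_\lambda$. The only cosmetic difference is that the paper first passes through finitely generated subalgebras of $B$ and $C$ (which inherit the surjectivity properties for free) before presenting them, whereas you build the presentations directly and compensate with the cofinality/interleaving bookkeeping, which you carry out correctly.
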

\begin{proof}
We begin by writing $B=\varinjlim_{\lambda\in\Lambda} B^\circ_\lambda$ and
$C=\varinjlim_{\lambda\in\Lambda} C^\circ_\lambda$ as direct limits of finitely
generated subalgebras.  We may then replace $C^\circ_\lambda$ with the
$C$-subalgebra generated by the images of $B^\circ_\lambda$ and
$C^\circ_\lambda$ so that we have homomorphisms $B^\circ_\lambda\to
C^\circ_\lambda$ for all $\lambda$. If $B\to C$ is surjective, then we let
$C^\circ_\lambda$ be the image of $B^\circ_\lambda\to C$. It is now easily
verified that if $A\to B$ (resp.\ $B\to C$, resp.\ $A\to C$) is surjective or
surjective with nilpotent kernel then so is $A\to B^\circ_\lambda$
(resp.\ $B^\circ_\lambda\to C^\circ_\lambda$, resp.\ $A\to C^\circ_\lambda$).

For every $\lambda$, choose surjections $P_\lambda\to B^\circ_\lambda$ and
$Q_\lambda\to C^\circ_\lambda$ where $P_\lambda$ and $Q_\lambda$ are finite and
finitely presented $A$-algebras. We may assume that we have homomorphisms
$P_\lambda\to Q_\lambda$ compatible with $B^\circ_\lambda\to C^\circ_\lambda$
and if $B\to C$ is surjective, then we take $P_\lambda=Q_\lambda$. For any
finite subset $L\subseteq \Lambda$ let $P_L=\bigotimes_{\lambda\in L}
P_\lambda$ and $Q_L=\bigotimes_{\lambda\in L} Q_\lambda$, where the tensor
products are over $A$.

For fixed $L\subseteq \Lambda$ choose finitely generated ideals $I_L\subseteq
\ker(P_L\to B)$ and $I_L Q_L \subseteq J_L\subseteq \ker(Q_L\to C)$ and let
$B_L=P_L/I_L$ and $C_L=Q_L/J_L$. If $A\to B$ (resp.\ $A\to C$) is surjective,
then for sufficiently large $I_L$ (resp.\ $J_L$), we have that $A\to B_L$
(resp.\ $A\to C_L$) is surjective. If $B\to C$ is surjective, then by
construction $P_L=Q_L$ so that $B_L\to C_L$ is surjective. If $B\to C$ has
nilpotent kernel, with nilpotency index $n$, then we replace $I_L$ with
$I_L+J_L^n$ so that $B_L\to C_L$ has nilpotent kernel.

Consider the set $\Xi$ of pairs $\xi=(L,I_L,J_L)$ where $L\subseteq \Lambda$ is
a finite subset, and $I_L\subseteq P_L$ and $J_L\subseteq Q_L$ are finitely
generated ideals as in the previous paragraph. Then $(B_L\to C_L)_\xi$ is a
filtered system of finite and finitely presented $A$-algebras with direct
limit
$(B\to C)$ which satisfies the conditions of the lemma.
\end{proof}
Fix a scheme $S$ and consider the category of diagrams $[Y \xleftarrow{f} X
  \xrightarrow{{i}} X']$ of $S$-schemes. We say that a morphism
$[Y_1 \xleftarrow{f_1} X_1 \xrightarrow{{i}_1} X'_1]\to 
[Y_2 \xleftarrow{f_2} X_2 \xrightarrow{{i}_2} X'_2]$ is affine if the
components $Y_1\to Y_2$, $X_1\to X_2$ and $X'_1\to X'_2$ are affine. Given an
inverse system of diagrams with affine bonding maps the inverse
limit then exists and is calculated component by component.
\begin{prop}\label{prop:pushout-approx}
Let $S$ be an affine scheme and let $P\in \{\HNIL,\HCL,\HrNIL,\HrCL,\HINT,\HA\}$
(cf.\ Section~\ref{sec:homogeneity}). Let $\mathbf{W}=[Y \xleftarrow{f} X
  \xrightarrow{{i}} X']$ be a diagram of affine $S$-schemes where
${i}$ is a nilpotent closed immersion, and $f$ is $P$.
Then $\mathbf{W}$ is an inverse limit of diagrams
$\mathbf{W}_\lambda=[Y_\lambda \xleftarrow{f_\lambda} X_\lambda
  \xrightarrow{{i}_\lambda} X'_\lambda]$ of affine finitely presented
$S$-schemes where ${i}_\lambda$ is a nilpotent closed immersion, and
$f_\lambda$ is $P$.
Moreover, if we let $Y'=Y\amalg_X X'$ and
$Y'_\lambda=Y_\lambda\amalg_{X_\lambda} X'_\lambda$ denote the push-outs, then
$Y'=\varprojlim_{\lambda\in\Lambda} Y'_\lambda$.
\end{prop}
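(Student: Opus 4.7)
\textit{Proof plan.} The plan is to translate the question into ring theory and carry out an approximation that parallels Lemma~\ref{lem:integral-approx}. Set $A = \Gamma(S, \Orb_S)$, and let $B, C, C'$ denote the coordinate rings of $Y, X, X'$, so that $\mathbf{W}$ corresponds to an $A$-algebra diagram $B \xrightarrow{\varphi} C \xleftarrow{\pi} C'$ in which $\pi$ is surjective with nilpotent kernel and $\varphi$ has the ring-theoretic property matching $P$. The pushout $Y' = Y \amalg_X X'$ corresponds to $\spec(B \times_C C')$, so since filtered colimits of rings commute with finite limits, the moreover-clause is automatic once we realize $B \to C \leftarrow C'$ as a filtered colimit of analogous diagrams of finitely presented $A$-algebras with the required properties.

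The construction mirrors that of Lemma~\ref{lem:integral-approx}, but now for a span of three rings instead of a sequence of two. First, write $B$, $C$, $C'$ as filtered unions of finitely generated $A$-subalgebras $B^\circ_\mu$, $C^\circ_\nu$, $(C')^\circ_\sigma$, and, by enlarging, arrange compatible maps $B^\circ_\mu \to C^\circ_\nu \leftarrow (C')^\circ_\sigma$ with $(C')^\circ_\sigma \twoheadrightarrow C^\circ_\nu$ surjective (by replacing $C^\circ_\nu$ by the image); the kernel is then automatically nilpotent because a subideal of a nilpotent ideal is nilpotent. In the integral cases $P \in \{\HNIL, \HCL, \HINT\}$, further arrange that $C^\circ_\nu$ is finite over $B^\circ_\mu$ (possible since $\varphi$ is integral), and in the surjective cases $P \in \{\HNIL, \HCL\}$ arrange that $B^\circ_\mu \to C^\circ_\nu$ is surjective. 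Then choose surjections $P_\mu \twoheadrightarrow B^\circ_\mu$, $Q_\nu \twoheadrightarrow C^\circ_\nu$, $R_\sigma \twoheadrightarrow (C')^\circ_\sigma$ from finitely presented $A$-algebras together with compatible maps between them. For each triple $(\mu,\nu,\sigma)$ and each choice of finitely generated ideals $I_\mu, J_\nu, K_\sigma$ in the kernels onto $B, C, C'$, set $B_\lambda = P_\mu/I_\mu$, $C_\lambda = Q_\nu/J_\nu$, $C'_\lambda = R_\sigma/K_\sigma$. The relative cases $P \in \{\HrCL, \HrNIL\}$ reduce to the non-relative ones by simultaneously approximating the auxiliary $X_0 \hookrightarrow X$, and the case $P = \HA$ requires no integrality assumption.

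The main obstacle, and the reason this needs a careful construction rather than a general limit argument, is the preservation of properties at each finite stage. The nilpotency of $\pi$ does not descend freely to $C'_\lambda \to C_\lambda$: a finitely generated ideal whose image in $C'$ lies in a nilpotent ideal need not itself be nilpotent in $R_\sigma$; similarly, surjectivity or integrality of $\varphi_\lambda$ must be imposed by hand. The remedy, copied directly from the proof of Lemma~\ref{lem:integral-approx}, is to enlarge $K_\sigma$ by a suitable power of the finitely generated ideal defining the intended kernel (using that the intended kernel is nilpotent in the colimit), and to enlarge $I_\mu$ by adding generators that enforce the necessary integrality or surjectivity relations; neither enlargement disturbs finite presentation since all ideals in play are finitely generated. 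Indexing the resulting data by a filtered poset produces the desired system $(\mathbf{W}_\lambda)$, and the identification $\mathbf{W} = \varprojlim \mathbf{W}_\lambda$ together with $Y' = \varprojlim Y'_\lambda$ follows from the commutation of filtered colimits with finite limits of rings.
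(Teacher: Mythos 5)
Your route is genuinely different from the paper's. You approximate the cospan $B\to C\leftarrow C'$ directly and impose both requirements --- that $C'_\lambda\to C_\lambda$ be surjective with nilpotent kernel and that $B_\lambda\to C_\lambda$ be $P$ --- by hand at each finite stage. The paper instead first forms the pushout $Y'$ and approximates the \emph{span} $[Y\hookrightarrow Y'\leftarrow X']$: there both legs are algebras over the single ring $\Gamma(\Orb_{Y'})$, so Lemma~\ref{lem:integral-approx} applies verbatim, and $X_\lambda$ is then recovered as the fiber product $X'_\lambda\times_{\bar{Y}'_\lambda}Y_\lambda$ (using $X=X'\times_{Y'}Y$ and commutation of limits). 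The payoff of that detour is that $i_\lambda$ and $f_\lambda$ inherit their properties automatically by base change from $Y_\lambda\hookrightarrow\bar{Y}'_\lambda$ and $X'_\lambda\to\bar{Y}'_\lambda$; nothing has to be enforced on the cospan itself. Your route avoids the pushout but must make one ring $C_\lambda$ serve two masters at once, which is precisely where the bookkeeping gets delicate, and your plan largely asserts rather than carries out that reconciliation.

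Within that plan there is one step that, as stated, does not work: you propose to ``enlarge $I_\mu$ by adding generators that enforce the necessary integrality or surjectivity relations.'' That is the wrong ideal. $I_\mu$ presents the \emph{source} $B_\lambda=P_\mu/I_\mu$; enlarging it neither changes the image of $P_\mu$ in $C_\lambda$ nor makes any element of $C_\lambda$ integral over $B_\lambda$. The relations must be imposed in the presentation of the \emph{target}: for $P\in\{\HNIL,\HCL\}$ take $Q_\nu$ to be (a quotient of) $P_\mu$, exactly as in Lemma~\ref{lem:integral-approx}, and for $P=\HINT$ add to $J_\nu$ lifts of the monic relations satisfied by the generators of $C^\circ_\nu$ over $B^\circ_\mu$ (such lifts lie in $\ker(Q_\nu\to C)$, so the colimit is unaffected). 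One must then further arrange $R_\sigma\twoheadrightarrow Q_\nu$ with $K_\sigma\subseteq J_\nu$ so that $C_\lambda$ is simultaneously a quotient of $C'_\lambda$, and obtain nilpotency of $J_\nu/K_\sigma$ by replacing $K_\sigma$ with $K_\sigma+J_\nu^{\,n}$, where $n$ is the nilpotency index of $\ker(C'\to C)$ (legitimate since $J_\nu^{\,n}\subseteq\ker(R_\sigma\to C)^n\subseteq\ker(R_\sigma\to C')$). With these corrections the direct cospan approach does go through; your disposal of the moreover-clause via commutation of filtered colimits with $B\times_C C'$ is the same as the paper's exact-sequence argument.
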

\begin{proof}
\newcommand{\Ybar}{\mathrlap{\bar{\phantom{Y}}}Y}
We will begin by looking at the induced diagram $[Y \xrightarrow{j} Y'
  \xleftarrow{g} X']$. As $j$ is a nilpotent closed immersion it follows that
$g$ has property $P$. The first step will be to write this diagram as an
inverse limit of diagrams $[Y_\lambda \xrightarrow{j_\lambda} \Ybar'_\lambda
  \xleftarrow{g_\lambda} X'_\lambda]$ of finite presentation over $S$ where
$j_\lambda$ is a nilpotent closed immersion and $g_\lambda$ has property $P$.
To this end, we begin by writing $Y'$ as an inverse limit of finitely
presented $S$-schemes $\Ybar'_{\alpha}$. By
Lemma~\ref{lem:integral-approx} we may also write $g\colon X'\to Y'$
(resp.\ $j\colon Y\to Y'$) as an inverse limit of finitely presented
$P$-morphisms $X'_\beta\to Y'$ (resp.\ finitely presented nilpotent closed
immersions $Y_\gamma\to Y'$).
For every pair $(\beta,\gamma)$ there is (\cite[\textbf{IV}.8.10.5]{EGA}) an
$\alpha=\alpha(\beta,\gamma)$, and a cartesian diagram
\[
\xymatrix@-0.7pc{Y_\gamma \ar@{(->}[r]\ar[d]
  & Y'\ar[d] & X'_\beta\ar[l]\ar[d] \\
  Y_{\alpha\beta\gamma}\ar@{(->}[r]
  & \Ybar'_\alpha & X'_{\alpha\beta\gamma}.\ar[l]}
\]
where $X'_{\alpha\beta\gamma}\to \Ybar'_{\alpha}$ is a finitely
presented $P$-morphism and $Y_{\alpha\beta\gamma}\to \Ybar'_{\alpha}$
is a nilpotent closed immersion.

For every $\alpha\geq \alpha(\beta,\gamma)$ we also let $[Y_{\alpha\beta\gamma}
  \rightarrow \Ybar'_\alpha \leftarrow X'_{\alpha\beta\gamma}]$ denote
the pull-back along $\Ybar'_\alpha\to
\Ybar'_{\alpha(\beta,\gamma)}$. Let $I=\{(\beta,\gamma,\alpha)\}$ be the
set of indices such that $\alpha>\alpha(\beta,\gamma)$. For every finite subset
$J\subset I$, we let
%
$$\Ybar'_J=\prod_{\mathclap{(\beta,\gamma,\alpha)\in J}}
   \Ybar'_{\alpha},\quad
Y_J=\prod_{\mathclap{(\beta,\gamma,\alpha)\in J}}
   Y_{\alpha\beta\gamma},\quad\text{and}\quad
X'_J=\prod_{\mathclap{(\beta,\gamma,\alpha)\in J}} X'_{\alpha\beta\gamma}$$
where the products are taken over $S$.
The finite subsets $J\subset I$ form a partially ordered set
under inclusion and the induced morphisms:
$$\Ybar'\to \varprojlim_J \Ybar'_J,\quad
Y\to \varprojlim_J Y_J,\quad\text{and}\quad
X'\to \varprojlim_J X'_J$$
are closed immersions. Now, let $K_{Y_J}=\ker(\Orb_{Y_J}\to (g_J)_*\Orb_{Y})$
and similarly for $K_{\Ybar'_J}$ and $K_{X'_J}$. Note that
$K_{\Ybar'_J}\Orb_{Y_J}\subset K_{Y_J}$ and
$K_{\Ybar'_J}\Orb_{X'_J}\subset K_{X'_J}$. We then let
$\Lambda=\{(J,R_{Y_J},R_{\Ybar'_J},R_{X'_J})\}$ where $J\subset I$ is a
finite subset and $R_{Y_J}\subset K_{Y_J}$, $R_{\Ybar'_J}\subset
K_{\Ybar'_J}$ and $R_{X'_J}\subset K_{X'_J}$ are finitely generated
ideals such that $R_{\Ybar'_J}\Orb_{Y_J}\subset R_{Y_J}$ and
$R_{\Ybar'_J}\Orb_{X'_J}\subset R_{X'_J}$. For every $\lambda\in
\Lambda$ we put
$$\Ybar'_\lambda=\spec(\Orb_{\Ybar'_J}/R_{\Ybar'_J}),\quad
Y_\lambda=\spec(\Orb_{Y_J}/R_{Y_J}),\quad\text{and}\quad
X'_\lambda=\spec(\Orb_{X'_J}/R_{X'_J})$$
Then $[Y \rightarrow Y' \leftarrow X']=\varprojlim_\lambda [Y_\lambda \rightarrow \Ybar'_\lambda \leftarrow X'_\lambda]$.
Finally, we take $X_\lambda=X'_\lambda\times_{\Ybar'_\lambda} Y_\lambda$ so
that $[Y \xleftarrow{f} X \xrightarrow{{i}} X']=\varprojlim_\lambda
[Y_\lambda \xleftarrow{f_\lambda} X_\lambda \xrightarrow{{i}_\lambda}
  X']$. Indeed, $X=X'\times_{Y'} Y$ and inverse limits commute with fiber
products.

For the last assertion, we note that all schemes are affine and that there
is an exact sequence
$$0\to \Gamma(\Orb_{Y'})\to \Gamma(\Orb_Y)\times \Gamma(\Orb_{X'})\to \Gamma(\Orb_X)\to 0$$
and similarly for the approximations $Y'_\lambda$ (which can be different from
$\Ybar'_\lambda$). As direct limits are exact
it follows that $Y'=\varprojlim Y'_\lambda$.
\end{proof}
\bibliography{references}
\bibliographystyle{dary}
\end{document}